\newtheorem{theorem}{Theorem}[section]
\newtheorem{lemma}[theorem]{Lemma}
\newtheorem{proposition}[theorem]{Proposition}
\newtheorem{corollary}[theorem]{Corollary}
\theoremstyle{definition}
\newtheorem{definition}[theorem]{Definition}
\newtheorem{example}[theorem]{Example}
\newtheorem{assumption}{Assumption}
\theoremstyle{remark}
\newtheorem{remark}[theorem]{Remark}
\numberwithin{equation}{section}
\setlist{leftmargin=2.5em}
\setlist[itemize]{label={--},leftmargin=1.5em}
\renewcommand{\geq}{\geqslant}
\renewcommand{\leq}{\leqslant}
\newcommand{\R}{\mathbb{R}}
\newcommand{\Z}{\mathbb{Z}}
\newcommand{\N}{\mathbb{N}}
\newcommand{\C}{\mathcal{C}}
\newcommand{\E}{\mathcal{E}}
\newcommand{\M}{\mathcal{M}}
\newcommand{\unit}{e}
\newcommand{\state}{S}
\newcommand{\MIN}{{\scshape Min}\xspace}
\newcommand{\MAX}{{\scshape Max}\xspace}
\newcommand{\polMIN}{\mathcal{S}_{\mathrm p}}
\newcommand{\polMAX}{\mathcal{T}_{\mathrm p}}
\newcommand{\chibar}{\overline{\chi}}
\newcommand{\transpose}[1]{{#1}^\intercal}
\newcommand{\Hnorm}[1]{\| \ifx\\#1\\ \cdot \else #1 \fi \|_\text{H}}
\def\<#1,#2>{#1 \cdot #2}
\newcommand{\ki}{k}
\begin{document}

\title[Generic uniqueness of the bias of finite stochastic games]{Generic uniqueness of the bias vector of finite zero-sum stochastic games with perfect information}

\author[M.~Akian]{Marianne Akian}
\author[S.~Gaubert]{St\'ephane Gaubert}
\author[A.~Hochart]{Antoine Hochart}
\address{INRIA Saclay-Ile-de-France and CMAP, Ecole polytechnique, Route de Saclay, 91128 Palaiseau Cedex, France}
\email{marianne.akian@inria.fr}
\email{stephane.gaubert@inria.fr}
\email{antoine.hochart@polytechnique.edu}
\thanks{A.~Hochart has been supported by a PhD fellowship of Fondation Math\'ematique Jacques Hadamard (FMJH). The authors are also partially supported by the PGMO program of EDF and FMJH and by the ANR (MALTHY Project, number ANR-13-INSE-0003).}

\subjclass[2010]{47J10; 91A20,93E20}

\keywords{Zero-sum games, ergodic control, nonexpansive mappings, fixed point sets, policy iteration}

\date{May 24, 2017}

\begin{abstract}
  Mean-payoff zero-sum stochastic games can be studied by means of a nonlinear spectral problem.
  When the state space is finite, the latter consists in finding an eigenpair $(u,\lambda)$
  solution of $T(u)=\lambda \unit + u$, where $T:\R^n \to \R^n$ is the Shapley
  (or dynamic programming) operator, $\lambda$ is a scalar, $\unit$ is the unit vector,
  and $u \in \R^n$.
  The scalar $\lambda$ yields the mean payoff per time unit, and the vector $u$,
  called {\em bias}, allows one to determine optimal stationary strategies
  in the mean-payoff game.
  The existence of the eigenpair $(u,\lambda)$ is generally related to ergodicity conditions.
  A basic issue is to understand for which classes of games the bias vector is unique
  (up to an additive constant).
  In this paper, we consider perfect-information zero-sum stochastic games with finite state
  and action spaces, thinking of the transition payments as variable parameters,
  transition probabilities being fixed.
  We show that the bias vector, thought of as a function of the transition payments,
  is generically unique (up to an additive constant).
  The proof uses techniques of 
  nonlinear Perron-Frobenius theory.
  As an application of our results, we obtain an explicit perturbation scheme
  allowing one to solve degenerate instances of stochastic games by policy iteration.
\end{abstract}

\maketitle

\section{Introduction}

\subsection{The ergodic equation for stochastic games}

Repeated zero-sum games describe long-term interactions between two agents, called players, with opposite interests.
In this paper, we consider {\em perfect-information zero-sum stochastic games}, in which the players choose repeatedly and alternatively an action, being informed of all the events that have previously occurred (state of nature and chosen actions).
These choices determine at each stage of the game a payment, as well as the next state by a stochastic process.
Given a finite horizon $k$ and an initial state $i$, one player intends to minimize the sum of the payments of the $k$ first stages, while the other player intends to maximize it.
This gives rise to the value of the $k$-stage game, denoted by $v_i^k$.

A major topic in the theory of zero-sum stochastic games is the asymptotic behavior of the mean values per time unit $(v_i^k/k)$ as the horizon $k$ tends to infinity.
The limit, when it exists, is referred to as the {\em mean payoff}.
This question was first addressed in the case of a finite state space by Everett~\cite{Eve57}, Kohlberg~\cite{Koh74}, and Bewley and Kohlberg~\cite{BK76}.
See also Rosenberg and Sorin~\cite{RS01}, Sorin~\cite{Sor04}, Renault~\cite{Ren11}, Bolte, Gaubert and Vigeral~\cite{BGV15} and Ziliotto~\cite{ziliotto}
for more recent developments.
We refer the reader to~\cite{NS03} for more background on stochastic games.

A way to study the asymptotic behavior of the values $(v_i^k)_k$ consists in exploiting the recursive structure of the game.
This structure is encompassed in the {\em dynamic programming} or {\em Shapley operator} of the game.
In this paper, since the state space is assumed to be finite, say $\{1,\dots,n\}$, the latter is a map $T: \R^n \to \R^n$.
Then, a basic tool to study the asymptotic properties of the sequence $(v_i^k)_k$ is the following nonlinear spectral problem, called the {\em ergodic equation}:
\begin{equation}
  \label{eq:ergodic-equation}
  T(u) = \lambda \unit + u \enspace ,
\end{equation}
where $\unit$ is the unit vector of $\R^n$.
Indeed, if there exist a vector $u \in \R^n$ and a scalar $\lambda \in \R$ solution of~\eqref{eq:ergodic-equation}, then, not only the sequence $(v_i^k/k)_k$ converges as the horizon $k$ tends to infinity, but also the limit is independent of the initial state $i$, and equal to $\lambda$.
This scalar, which is unique, is called the {\em ergodic constant} or the {\em (additive) eigenvalue} of $T$, and the vector $u$, called {\em bias vector} or {\em (additive) eigenvector}, gives optimal stationary strategies in the mean-payoff game. 

A first question is to understand when the ergodic equation is solvable.
In \cite{AGH15}, we considered the Shapley operator $T: \R^n \to \R^n$ of a game with finite state space and bounded transition payment function, 
and gave necessary and sufficient conditions under which the ergodic equation is solvable for all the operators $g+T$ with $g \in \R^n$, or equivalently for all perturbations $g$ of the payments that only depend on the state.
Moreover, assuming the compactness of the action sets of players and some continuity of the transition functions, these conditions can be characterized in terms of reachability in directed hypergraphs.

A second
question concerns the structure of the set of bias vectors. 
For one-player problems, i.e., for discrete optimal control, the ergodic equation~\eqref{eq:ergodic-equation}, also known as the {\em average case optimality equation}, has been much studied, either in the deterministic or in the stochastic case (Markov decision problems).
Then, the representation of bias vectors and their relation with optimal strategies is well understood.

Indeed, in the deterministic case, the analysis of the ergodic equation relies on max-plus spectral theory, which goes back to the work of Romanovsky~\cite{Rom67}, Gondran and Minoux~\cite{GM77} and Cuninghame-Green~\cite{CG79}.
Kontorer and Ya\-ko\-venko~\cite{KY92}
deal specially with infinite horizon optimization and mean-payoff problems.
We refer the reader to the monographies~\cite{BCOQ92, KM97,butkovic}
or surveys~\cite{Bap98,ABG13} for more background on max-plus spectral theory.
One of the main result of this theory shows that the set of bias vectors has the structure of a max-plus (tropical) cone, i.e., that it is invariant by max-plus linear combinations, and it has a unique minimal generating family consisting of certain ``extreme'' generators, which can be identified by looking at the support of the maximizing measures in the linear programming formulation of the optimal control problem, or at the ``recurrence points'' of infinite optimal trajectories.
A geometric approach to some of these results, in terms of polyhedral fans, has been recently given by Sturmfels and Tran~\cite{ST13}.

The eigenproblem~\eqref{eq:ergodic-equation} has been studied in the more general infinite-dimensional state space case, see
Kolokoltsov and Maslov~\cite{KM97},
Mallet-Paret and Nussbaum~\cite{Nuss-Mallet}, and
 Akian, Gaubert and Walsh~\cite{AGW09} for an approach in terms of horoboundaries. It has also been studied in the setting of weak KAM theory, for which we refer the reader to Fathi~\cite{Fat}.
In the weak KAM setting, the bias vector becomes the solution
of an ergodic Hamilton-Jacobi PDE; Figalli and Rifford showed
in~\cite{rifford} that this solution is unique for generic
perturbations of a Hamiltonian by a potential function.
In the stochastic case, the structure of the set of bias vectors is still known when the state space is finite, see Akian and Gaubert~\cite{AG03}. 

In the two-player case, the structure of the set of bias vectors is less well known, although the description of this set remains a fundamental issue.
In particular, the uniqueness of the bias vector up to an additive constant
is an important matter for algorithmic purposes.
Indeed, the nonuniqueness of the bias typically leads to numerical instabilities or degeneracies.
In particular, the standard Hoffman and Karp policy iteration algorithm~\cite{HK66} may fail to converge in situations in which the bias vector is not unique.
The standard approach to handle such degeneracies is to approximate
the ergodic problem by the discounted problem. This
was proposed by Puri~\cite{puri}
in the case of deterministic games and this was further analyzed
by Paterson and Zwick~\cite{zwick}, see also the discussion by Friedmann in~\cite{friedmann}.
A different approach was proposed 
by Cochet-Terrasson and Gaubert~\cite{CTG06}, Akian, Cochet-Terrasson, Detournay and Gaubert~\cite{ACTDG}, and by Bourque and Raghavan~\cite{BR14}, allowing one to circumvent such degeneracies at the price of an increased complexity of the algorithm (handling the nonuniqueness of the bias).
Hence, it is of interest to understand when such technicalities can be avoided.

\subsection{Main results}

We address the question of the uniqueness of the bias vector for stochastic games with perfect information and finite state and action spaces, restricting our attention to games for which the ergodic equation~\eqref{eq:ergodic-equation} is solvable {\em for all state-dependent perturbations of the transition payments}.
Our main result, Theorem~\ref{thm:generic-uniqueness}, shows that the bias vector is generically unique up to an additive constant.
More precisely, we show that the set of perturbation vectors for which the bias vector is not unique belongs to a polyhedral complex the cells of which have codimension one at least.
A first ingredient in the proof relies on nonlinear Perron-Frobenius theory~\cite{AG03}.
A second ingredient is a general result, showing that the set of fixed points of a nonexpansive self-map of $\R^n$ is a retract of $\R^n$, see Theorem~\ref{thm:nonexpansive-retract}.
This allows us to infer the uniqueness of the bias vector of a Shapley operator from the uniqueness of the bias vector of the reduced Shapley operators obtained by fixing the strategy of one player.

We then present an algorithmic application of our results.
Hoffman and Karp introduced a policy iteration algorithm to solve mean-payoff zero-sum
stochastic games with perfect information and finite state and action spaces~\cite{HK66}.
They showed that policy iteration does terminate if every pair of strategies
of the two players yields an irreducible Markov chain.
If this irreducibility assumption is not satisfied, policy iteration may cycle.
However, the irreducibility assumption is not satisfied by many classes of games
-- in particular, it is essentially never satisfied for deterministic games.
The cycling of the Hoffman-Karp algorithm is due to the nonuniqueness of the bias vector. 
Hence, we deduce from our results that the Hoffman-Karp policy iteration
algorithm does converge if the payment is generic.
Moreover, we provide a family of effective perturbations of the payment for which
the bias vector is unique (up to an additive constant).
This leads to an explicit perturbation scheme, allowing one to solve nongeneric instances
by policy iteration, avoiding the classical irreducibility condition
or the use of vanishing discount based perturbation schemes.

The paper is organized as follows.
After some preliminaries on stochastic games in Section~\ref{sec:preliminaries}, we establish
the generic uniqueness of the bias vector in Section~\ref{sec:generic-uniqueness}.
The application to policy iteration is presented in Section~\ref{sec:applications}.

We finally point out that some of the present results have been announced 
in the conference article~\cite{cdc2014}.

\section{Preliminaries on zero-sum stochastic games}
\label{sec:preliminaries}

\subsection{Games with perfect information}
\label{sec:prel1}
In this paper we consider {\em finite (zero-sum) stochastic games with perfect information}, where the second player, called player \MAX, always makes a move after being informed of the action chosen by the first player, called player \MIN.
Such a game is characterized by the following:
\begin{itemize}
  \item a finite state space, that we denote by $\state = \{1,\dots,n\}$;
  \item finite action spaces, denoted by $A_i$ for player \MIN when the current state is $i \in \state$, and by $B_{i,a}$ for player \MAX in state $i$ and after action $a \in A_i$ has been chosen by player \MIN;
  \item a transition payment $r_i^{a b} \in \R$, paid by player \MIN to player \MAX, when the current state is $i \in \state$ and the last actions selected by the players are $a \in A_i$ and $b \in B_{i,a}$;
  \item a transition probability $P_i^{a b} \in \Delta(\state)$ (where $\Delta(\state)$ denotes the set of probability measures over $\state$) which gives the law according to which the next state is determined, when the current state is $i \in \state$ and the last actions selected by the players are $a \in A_i$ and $b \in B_{i,a}$.
\end{itemize}
Denoting by $K_A := \bigcup_{i \in \state} \{i\} \times A_i$ the action
set of player \MIN
and by $K_B := \bigcup_{(i,a) \in K_A} \{(i,a)\} \times B_{i,a}$ the action
set of
player \MAX, a finite stochastic game with perfect information is thus defined
by a 5-tuple $\Gamma := (\state, K_A, K_B, r, P)$ where the three first sets are finite.

Such a game is played in stages, starting from a given initial state $i_0$, as follows: at step $\ell$, if the current state is $i_\ell$, player \MIN chooses an action $a_\ell \in A_{i_\ell}$, and player \MAX subsequently chooses an action $b_\ell \in B_{i_\ell, a_\ell}$.
Then, player \MIN pays $r_{i_\ell}^{a_\ell b_\ell}$ to player \MAX and the next state is chosen according to the probability law $P_{i_\ell}^{a_\ell b_\ell}$.
The information is perfect, meaning that at each step, the players have a perfect knowledge of all the previously chosen actions, as well as the state previously visited.

Denote by $H_k^\text{\MIN} := (K_B)^k \times \state$ the set of histories of length $k$
of player \MIN and by $H^\text{\MIN} := \bigcup_{k \in \N} H_k^\text{\MIN}$ the set of
all finite histories of player \MIN.
Likewise, define $H_k^\text{\MAX} := (K_B)^k \times K_A$ and
$H^\text{\MAX} := \bigcup_{k \in \N} H_k^\text{\MAX}$ to be, respectively, the set of
histories of length $k$ and the set of all finite histories of player \MAX.
Let $H_\infty := (K_B)^\N$ be the set of infinite histories.
A strategy of player \MIN is a map
\[
  \sigma: H^\text{\MIN} \to \bigcup_{i \in \state} \Delta(A_i)
\]
(where $\Delta(X)$ denote the set of probability measures over any set $X$) such that,
for every finite history $h_k = (i_0,a_0,b_0,\dots,i_k) \in H_k^\text{\MIN}$,
we have $\sigma(\cdot \mid h_k) \in \Delta(A_{i_k})$.
Likewise, a strategy of player \MAX is a map
\[
  \tau: H^\text{\MAX} \to \bigcup_{(i,a) \in K_A} \Delta(B_{i,a})
\]
such that, for every finite history
$h'_k = (i_0,a_0,b_0,\dots,i_k,a_k) \in H_k^\text{\MAX}$, we have
$\tau(\cdot \mid h'_k) \in \Delta(B_{i_k,a_k})$.

A strategy $\sigma$ (resp.\ $\tau$) of player \MIN (resp.\ \MAX) is pure and Markovian if it depends only on the current stage and state 
(resp.\ the current stage, state and action of player \MIN) and is deterministic,
that is its values are Dirac probabilities.
Such a strategy is stationary if it does not depend on stage.
We denote by $\polMIN$ the finite set of (deterministic) 
{\em policies} of player \MIN, i.e., the set of maps
$\sigma: \state \to \bigcup_{i \in \state} A_i$ such that $\sigma(i) \in A_i$
for every state $i \in \state$.
Likewise, we denote by $\polMAX$ the set of (deterministic) 
policies of player \MAX, i.e., the set of maps
$\tau: K_A \to \bigcup_{i \in \state, a \in A_i} B_{i,a}$
such that $\tau(i,a) \in B_{i,a}$ for every state $i \in \state$ and action $a \in A_i$.
Then, a pure Markovian strategy of player \MIN (resp.\ \MAX) 
can be identified to a sequence of policies:
for  every history of length $k$ of player \MIN, $h_k = (i_0,a_0,b_0,\dots,i_k) \in H_k^\text{\MIN}$,
$\sigma(\cdot\mid h_k)$ is the Dirac measure at $\sigma_k(i_k)$.
Likewise, for every history of length $k$ of player \MAX,
$h'_k = (i_0,a_0,b_0,\dots,i_k,a_k) \in H_k^\text{\MAX}$, 
$\tau(\cdot \mid h'_k)$ is the Dirac measure at $\tau_k(i_k,a_k)$.
Moreover, a pure Markovian stationary strategy can be identified to a single policy.

Given an initial state $i$ and strategies $\sigma$ and $\tau$ of the players, the triple
$(i,\sigma,\tau)$ defines a probability law on $H_\infty$ the expectation of which is denoted
by $\mathbb{E}_{i,\sigma,\tau}$.
The {\em payoff} of the $k$-stage game is the following additive function
of the transition payments:
\begin{equation*}
  \label{eq:finite-horizon-payoff}
  J_i^k(\sigma,\tau) := \mathbb{E}_{i,\sigma,\tau}
  \Bigg[ \sum_{\ell=0}^{k-1} r_{i_\ell}^{a_\ell b_\ell} \Bigg] \enspace .
\end{equation*}
Player \MIN intends to choose a strategy minimizing the payoff $J_i^k$, whereas player \MAX
intends to maximize the same payoff.
The value of the $k$-stage game starting at state $i$ is then defined by
\[
  v_i^k := \inf_\sigma \sup_\tau J_i^k(\sigma,\tau) =
  \sup_\tau \inf_\sigma J_i^k(\sigma,\tau) \enspace ,
\]
when the equality holds, 
where the infimum and the supremum are taken over the set of all strategies of
players \MIN and \MAX, respectively.
Since the game has finite state and action spaces, the value exists,
with the infima and suprema realized by pure Markovian strategies
of both players~\cite{Sha53}.

In this paper, we are interested in the asymptotic behavior of the sequence of mean values per time unit $(v^k/k)_{k \geq 1}$.
When the latter ratio converges, the limit will be called the {\em mean-payoff vector}.
A stronger condition is the existence of a 
{\em uniform value} $v^{\text{U}}$, meaning that
\begin{align}
\inf_{\sigma}\, \limsup_{k \to \infty} \, \sup_{\tau}\; \frac{1}{k} J_i^k(\sigma,\tau) 
\leq v_i^\text{U} 
\leq \sup_{\tau}\,\liminf_{k \to \infty} \,\inf_{\sigma}\;\frac{1}{k} J_i^k(\sigma,\tau) \enspace ,\label{e-def-unifvalue}
\end{align}
where infima and suprema are taken over all strategies
(see for instance~\cite{sorinbook,renault12}).
This implies that $\lim_k v^k/k = v^{\text{U}}$.
Note that the first (resp.\ second) inequality means that player \MIN 
(resp.\ \MAX) uniformly guarantees $v_i^\text{U}$.

Moreover, following~\cite{laraki-renault},
{\em optimal uniform strategies} are defined as strategies $\sigma^*$ and
$\tau^*$ for players \MIN and \MAX respectively such that
\begin{align}
 \limsup_{k \to \infty} \, \sup_{\tau}\; \frac{1}{k} J_i^k(\sigma^*,\tau) =
v_i^\text{U}  =\liminf_{k \to \infty} \,\inf_{\sigma}\;\frac{1}{k} J_i^k(\sigma,\tau^*) \enspace .
\label{e-def-optuniform}
\end{align}
Note that Condition~\eqref{e-def-optuniform} is 
stronger than~\eqref{e-def-unifvalue}.
When $v^\text{U}$ exists, it also coincides with the value of the zero-sum game
in which one considers one the following  {\em limiting average payoffs}:
\begin{equation}
  \label{eq:average-payoffs}
  \begin{aligned}
    J_i^\text{+}(\sigma,\tau) & := \limsup_{k \to \infty} \; \frac{1}{k} J_i^k(\sigma,\tau)  \enspace , \\
    J_i^\text{-}(\sigma,\tau) & := \liminf_{k \to \infty} \; \frac{1}{k} J_i^k(\sigma,\tau) \enspace .
  \end{aligned}
\end{equation}
Moreover, optimal uniform strategies are also optimal for these games.
Mertens and Neyman~\cite{MN81} proved that for stochastic games with finite
state and action spaces and imperfect information the uniform value exists.
It follows that the uniform value also exists for finite state and action
spaces games with perfect information -- the latter can be reduced
to degenerate instances of imperfect-information games in which in each state,
only one of the players has a choice of action.
We shall also recall in Theorem~\ref{th-inv}
how for this class of games, the existence of the uniform value
and of uniform optimal strategies, follows from a result
of Kohlberg.  

In the computer science litterature~\cite{zwick,AM09}, mean-payoff games are defined in a slightly different manner, following Ehrenfeucht and Mycielski~\cite{ehrenfeucht}, as non-zero sum games in which player \MIN\
wishes to minimize $J_i^\text{+}(\sigma,\tau)$ whereas player \MAX\
wishes to maximize $J_i^\text{-}(\sigma,\tau)$. 
Liggett and Lippman~\cite{LL69} showed that such games 
admit optimal policies
$\sigma^*,\tau^*$ and a value $v^*$, meaning that 
\[
J_i^\text{+}(\sigma^*,\tau) \leq 
v^*=J_i^\text{+}(\sigma^*,\tau^*)=
J_i^\text{-}(\sigma^*,\tau^*) \leq J_i^\text{-}(\sigma,\tau^*) 
\]
for all pair of strategies $\sigma,\tau$ of players \MIN\ and \MAX.
The latter property is implied by the existence of the uniform value
and the existence of pure Markovian stationary uniform optimal strategies
(also called uniform optimal policies), so that $v^*=v^{\text{U}}$.

Therefore, in the sequel, we will use the term {\em mean-payoff games} with a general
meaning, understanding that the different approaches that we just discussed
lead to the same notion of value. In particular, the notion of value
and optimal policies can always been understood in the strongest sense
(uniform value and uniform optimal policies).

\subsection{The operator approach}\label{subsec-operator}

The study of the value vector $v^k = (v_i^k)_{i \in \state}$ involves the  {\em dynamic programming operator}, or {\em Shapley operator} of the game.
The latter is a map $T: \R^n \to \R^n$ whose $i$th coordinate is given by
\begin{equation}
  \label{eq:Shapley-operator}
  T_i(x) = \min_{a \in A_i} \max_{b \in B_{i,a}} \left( r_i^{a b} + P_i^{a b} x \right) \enspace , \quad x \in \R^n \enspace.
\end{equation}
Note that an element $P \in \Delta(\state)$ is seen as a row vector $P=(P_j)_{j \in \state}$ of $\R^n$, so that $P x$ means $\sum_{j \in \state} P_j x_j$.
Also note that, given a vector $g \in \R^n$, the operator $g+T$ appears as the Shapley operator
of the game $(\state, K_A, K_B, \tilde{r}, P)$ where the transition payment function satisfies
$\tilde r_i^{a b} = g_i + r_i^{a b}$.
The latter game is almost identical to the initial game $(\state, K_A, K_B, r, P)$,
except that the transition payments are perturbed with quantities that only depend on the state,
hence the designation of $g$ as an {\em additive (state-dependent) perturbation vector}.

The Shapley operator allows one to determine recursively the value vector
of the $k$-stage game:
\begin{equation}
  \label{eq:dynamic-prog-principle}
  v^k = T(v^{k-1}) \enspace , \quad  v^0 = 0 \enspace .
\end{equation}

Also, $T$ is {\em monotone} and {\em additively homogeneous}, meaning that it satisfies the following two properties, respectively:
\begin{align*}
  \tag{monotonicity}
  & x \leq y \implies T(x) \leq T(y) \enspace , \quad x, y \in \R^n \enspace ,\\
  \tag{additive homogeneity}
  & T(x + \lambda \unit) = T(x) + \lambda \unit \enspace , \quad x \in \R^n, \enspace \lambda \in \R \enspace ,
\end{align*}
where $\R^n$ is endowed with its usual partial order
and $\unit$ is the unit vector of $\R^n$.
A first consequence of the additive homogeneity of $T$ is that, for any bias vector $u$ and any scalar $\alpha \in \R$, the vector $u + \alpha \unit$ is also a bias: we say that $u$ is defined up to an {\em additive constant}.
More generally, the importance of the above axioms in stochastic control and game theory has been known for a long time~\cite{CT80}.
In particular, they imply that $T$ is {\em sup-norm nonexpansive}:
\[
  \|T(x) - T(y)\|_\infty \leq \|x-y\|_\infty \enspace , \quad x, y \in \R^n \enspace .
\]

According to the dynamic programming principle~\eqref{eq:dynamic-prog-principle}, the mean-payoff vector defined above is given by
\[
\chi(T) := \lim_{k \to \infty} \frac{T^k(0)}{k} \enspace ,
\]
where $T^k := T \circ \dots \circ T$ denotes the $k$th iterate of $T$.
Observe that, since $T$ is sup-norm nonexpansive, $0$ could be replaced by any vector $x \in \R^n$ in the above limit.

Here, since the action spaces are finite, $T$ is a {\em piecewise affine} map over $\R^n$. Kohlberg showed in~\cite{Koh80} that a piecewise
affine self-map $T$ of $\R^n$ that is nonexpansive in an arbitrary norm
has an {\em invariant half-line},
meaning that there exist two vectors $u, \nu \in \R^n$ such that
\begin{equation}
  \label{eq:invariant-half-line}
  T(u + \alpha \nu) = u + (\alpha + 1) \nu
\end{equation}
for every scalar $\alpha$ large enough.
Kohlberg's theorem applies to the present setting, since the Shapley
operator~\eqref{eq:Shapley-operator} is nonexpansive in the sup-norm.
This implies that the limit $\chi(T)$ does exist and coincides with $\nu$.
It follows that the vector $\nu$ arising in the definition
of invariant half-lines is unique. 

The existence of uniform optimal policies follows from the existence
of an invariant half-line. Moreover, such policies are readily
computed from the invariant half-line.
Although these properties are surely known to some experts, 
we could not find a reference for them, so we next state them as Theorem~\ref{th-inv}.

To this end, we define the {\em reduced Shapley operator} $T^\sigma: \R^n \to \R^n$ associated with the policy $\sigma \in \polMIN$ of player \MIN.
Its $i$th coordinate map is given by
\begin{align}\label{e-def-Tsigma}
  T_i^\sigma(x) = \max_{b \in B_{i,\sigma(i)}} \big( r_i^{\sigma(i) b} + P_i^{\sigma(i) b} x \big), \quad x  \in \R^n \enspace.
\end{align}
Since the action spaces are finite, we readily have, for all $x \in \R^n$,
\begin{equation}
  \label{eq:T-min}
  T(x) = \min_{\sigma \in \polMIN} T^\sigma(x) \enspace ,
\end{equation}
where by $\min$, we mean that for every $x\in\R^n$, the minimum
is attained. Indeed, it suffices to take for $\sigma(i)$ any action
$a\in A_i$ achieving the minimum in~\eqref{eq:Shapley-operator}. 

Similarly, to any policy $\tau \in \polMAX$ of player \MAX,
we associate a {\em dual reduced Shapley operator} $^{\tau} T$,
whose $i$th coordinate map is given  by 
\[
\leftidx{^\tau}{T}{_i}(x) = \min_{a \in A_{i}} \big( r_i^{a \tau(i,a)} + P_i^{a \tau(i,a)} x \big), \quad x  \in \R^n \enspace.
\]
For all $x \in \R^n$, we have
\begin{equation}
  \label{eq:T-max}
  T(x) = \max_{\tau \in \polMAX} \leftidx{^\tau}{T}{}(x) \enspace .
\end{equation}

\begin{theorem}[Coro.\ of~\cite{Koh80}]\label{th-inv}
Perfect information stochastic games with finite state and action spaces
have a uniform value and uniform optimal policies $\sigma^*,\tau^*$
that are obtained as follows:
given an invariant half-line $\alpha \mapsto u+ \alpha \nu$ of the Shapley operator $T$, take 
for $\sigma^*$ any policy $\sigma$ attaining the minimum in~\eqref{eq:T-min}
when $x$ is substituted by $u+\alpha \nu$ for $\alpha$ large enough.
Similarly, take 
for $\tau^*$ any policy $\tau$ attaining the maximum in~\eqref{eq:T-max}
when $x$ is substituted by the same quantity.
\end{theorem}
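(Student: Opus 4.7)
The plan is to leverage the invariant half-line $\alpha \mapsto u + \alpha\nu$ provided by Kohlberg's theorem, together with the finiteness of $\polMIN$ and $\polMAX$, to exhibit explicit stationary policies satisfying~\eqref{e-def-optuniform}. The existence of the uniform value will follow as a by-product.

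First, I would pin down a single policy $\sigma^* \in \polMIN$ attaining the minimum in~\eqref{eq:T-min} at every point $u + \alpha\nu$ for $\alpha$ large. Because $T$ is a finite minimum of piecewise affine maps $T^\sigma$ and $\polMIN$ is finite, the set of $\alpha$ on which a given $T^\sigma$ agrees with $T$ along the ray is a finite union of closed intervals; consequently some $\sigma^*$ satisfies
\[
T^{\sigma^*}(u + \alpha\nu) \;=\; T(u + \alpha\nu) \;=\; u + (\alpha+1)\nu, \qquad \alpha \geq \alpha_1,
\]
for some $\alpha_1 \geq \alpha_0$. A short induction on $k$ then yields
\[
(T^{\sigma^*})^k(u + \alpha\nu) \;=\; u + (\alpha+k)\nu, \qquad k \in \N,\ \alpha \geq \alpha_1.
\]

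Second, since $T^{\sigma^*}$ is sup-norm nonexpansive, fixing any $\alpha \geq \alpha_1$ gives
\[
\| (T^{\sigma^*})^k(0) - u - (\alpha+k)\nu \|_\infty \;\leq\; \|u + \alpha\nu\|_\infty,
\]
hence $(T^{\sigma^*})^k(0)/k \to \nu$ as $k \to \infty$. When player \MIN\ plays the stationary policy $\sigma^*$, the game reduces to a Markov decision process for \MAX\ whose Shapley operator is precisely $T^{\sigma^*}$; the dynamic programming principle then gives $\sup_\tau J_i^k(\sigma^*,\tau) = ((T^{\sigma^*})^k(0))_i$, the supremum being attained by a pure Markovian strategy. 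Dividing by $k$ and taking $\limsup$ yields $\limsup_k \sup_\tau \frac{1}{k}J_i^k(\sigma^*,\tau) \leq \nu_i$, so $\sigma^*$ uniformly guarantees $\nu_i$ for \MIN. The dual construction applied to $T = \max_\tau \leftidx{^\tau}{T}{}$ produces a $\tau^* \in \polMAX$ giving the matching bound $\liminf_k \inf_\sigma \frac{1}{k} J_i^k(\sigma,\tau^*) \geq \nu_i$.

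Finally, for any $\sigma$ one has $\sup_\tau J_i^k(\sigma,\tau) \geq v_i^k$ and $v_i^k/k \to \nu_i$ by Kohlberg's theorem, so $\inf_\sigma \limsup_k \sup_\tau \frac{1}{k}J_i^k(\sigma,\tau) \geq \nu_i$; combined with the previous bound from $\sigma^*$ this forces equality, and symmetrically for $\tau^*$. The sandwich~\eqref{e-def-unifvalue} then yields $v^{\text{U}} = \nu$, and the bounds from $\sigma^*,\tau^*$ upgrade to~\eqref{e-def-optuniform}, establishing uniform optimality of the pair. The main obstacle is the very first step: one must argue that the \emph{same} policy $\sigma^*$ can be chosen on a whole tail $[\alpha_1,\infty)$, rather than merely along a sequence $\alpha_j \to \infty$. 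This is where the piecewise affine structure of $T^\sigma$ together with the finiteness of $\polMIN$ is essential; once this is granted, the remainder of the argument is a routine combination of nonexpansiveness and the one-player Markov decision reduction.
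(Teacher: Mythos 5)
Your proposal is correct and follows essentially the same route as the paper: you extract a single policy valid on a whole tail of the invariant half-line (the paper phrases this via the total order on germs of affine functions at $+\infty$, you via the piecewise-affine agreement sets along the ray being finite unions of intervals — the same fact), then iterate the reduced operator along the half-line, transfer to $(T^{\sigma^*})^k(0)$ by nonexpansiveness, and invoke the one-player dynamic programming principle to obtain~\eqref{e-def-optuniform} with $v^{\text{U}}=\nu$. The only cosmetic difference is that the paper treats the $\tau^*$ side explicitly and gets $\sigma^*$ by duality, while you do the reverse.
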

\begin{proof}
Recall that the {\em germ} of a real function $\R\to \R, \; \alpha \mapsto f(\alpha)$, at the point $+\infty$ is the equivalence class of $f$ modulo the relation
$f\sim g$ if there exists $\alpha_0$ such that $f(\alpha) =g(\alpha)$
for $\alpha\geq \alpha_0$. We shall use the same notation
$f$ for the function and its equivalence class. We shall consider in particular the set $\mathbb{A}$ of germs of affine functions. 
Observe that $\mathbb{A}$ is invariant
by linear combinations, by translation by a constant, and also by the operations $\min$ and $\max$, because $\mathbb{A}$ is totally ordered
(of two affine functions of $\alpha$, one ultimately dominates the other
as $\alpha\to \infty$).
Since the action spaces are finite, if $f\in \mathbb{A}^n$, we may identify $\alpha \to T(f(\alpha))$ to an element of $\mathbb{A}^n$. In other words, $T$
acts on vectors of germs of affine functions. 
We now substitute $x=u+ \alpha \nu$
in~\eqref{eq:Shapley-operator}.
Since $\mathbb{A}$ is totally ordered, the minima
and maxima in~\eqref{eq:Shapley-operator} are attained by actions independent of $\alpha$ provided that $\alpha$ is large enough.
I.e., there exists policies $\sigma^*$ and $\tau^*$ of players \MIN and \MAX
respectively and a constant $\alpha_0$ such that for
all $\alpha\geq \alpha_0$, 
\begin{align}\label{e-invhalf}
u+(\alpha +1) \nu = T(u+ \alpha \nu) = T^{\sigma^*}(u+ \alpha \nu) = \leftidx{^{\tau^*}}{\!T}{}(u+\alpha \nu)  \enspace .
\end{align}
It will now be convenient to use the operator
$\leftidx{^\tau}{T}{^\sigma}$ from $\R^n$ to $\R^n$, such that:
\[
\leftidx{^\tau}{T}{^\sigma_i}(x) = 
r_i^{\sigma(i) \tau(i,\sigma(i))} + P_i^{\sigma(i) \tau(i,\sigma(i))} x  \enspace. 
\]
We have 
\[ T(u+\alpha \nu) = \leftidx{^{\tau^*}}{\!T}{^{\sigma^*}}(u+\alpha \nu)
\enspace .
\]
The dynamic programming principle, for one-player games, implies that 
\[
\inf_{\sigma} J_i^k(\sigma,\tau^*) = (\leftidx{^{\tau^*}}{\!T}{})^k_i(0)
\enspace ,
\]
where the infimum is taken over all strategies of player \MIN.
By~\eqref{e-invhalf}, $\alpha \mapsto u+ \alpha \nu$ is an invariant
half-line of $\leftidx{^{\tau^*}}{\!T}{}$, and so
\[
\lim_k \inf_{\sigma}\frac{1}{k} J_i^k(\sigma,\tau^*) = 
\lim_k 
\frac{1}{k}(\leftidx{^{\tau^*}}{\!T}{})^k_i(0)
= \chi_i (\leftidx{^{\tau^*}}{\!T}{})= \nu_i \enspace,
\]
showing that the second equality in~\eqref{e-def-optuniform}
holds, with $v^{\text{U}}=\nu$.
Considering $T^{\sigma^*}$ instead
of $\leftidx{^{\tau^*}}{\!T}{}$ and arguing by duality, we deduce 
that the first equality in~\eqref{e-def-optuniform}
also holds, so that $\sigma^*,\tau^*$
are uniform optimal policies, which implies that the uniform value exists.
\end{proof}

Let us mention that the mean-payoff vector and the uniform value exist,
more generally, when $T$ is semialgebraic~\cite{BK76} or even definable in an o-minimal structure~\cite{BGV15}.
However, the existence of $\chi(T)$ is not guaranteed in general: a recent result of Vigeral~\cite{Vig13} shows that the limit may not exist even with a compact action space and transition payments and probabilities that are continuous with respect to the actions. 

Finally, observe  that the existence of an invariant
half-line $\alpha \mapsto u+ \alpha \nu$ where $\nu= \lambda e$ is a constant
vector is equivalent to the solvability of the ergodic equation $T(u)= u+ \lambda e$.
In the present setting, this implies that the solvability of the ergodic equation is equivalent
to the mean-payoff vector being independent of the initial state, that is
$\chi_i(T) = \chi_j(T)$ for every $i, j \in \state$.
Moreover, in this special case, the optimal policies $\sigma^*,\tau^*$ constructed
in Theorem~\ref{th-inv} are obtained by selecting minimizing
and maximizing actions in the expression of $T(x)$ in~\eqref{eq:Shapley-operator}, when
$x$ is replaced by $u$.

\section{Generic uniqueness of the bias vector of stochastic games}
\label{sec:generic-uniqueness}

\subsection{Statement of the main result}
\label{sec:main-result}

Let us first recall some definitions.
A {\em polyhedron} in $\R^n$ is an intersection of finitely many 
closed half-spaces,
a {\em face} of a polyhedron is an intersection of this polyhedron
with a supporting half-space, and a {\em polyhedral complex} is a finite set $\mathcal{K}$ of polyhedra satisfying the two following properties:
\begin{enumerate}
  \item $P \in \mathcal{K}$ and $F$ is a face of $P$ implies that $F \in \mathcal{K}$;
  \item for all $P, Q \in \mathcal{K}$, $P \cap Q$ is a face of $P$ and $Q$.
\end{enumerate}
A polyhedron in $\mathcal{K}$ is called a {\em cell} of the polyhedral complex.
We refer to the textbook~\cite{DLRS10} for background on polyhedral complexes.

Also, a map over $\R^n$ is said to be {\em piecewise affine} if $\R^n$ can be covered by a finite union of polyhedra (with nonempty interior) on which its restriction is affine.
in that case, the set of such polyhedra can be refined in a polyhedral complex.
In~\cite{Ovc02,AT07}, it is shown that the piecewise affine functions are exactly the ones that are defined as in~\eqref{eq:Shapley-operator}, i.e., the functions that can be written as a minimax over finite sets of affine functions.

Finally, we introduce the following definition, that extends the notion of (finite) ergodic Markov chain.
\begin{definition}\label{def-ergodicity}
  A stochastic game with finite state space and Shapley operator $T: \R^n \to \R^n$ is {\em ergodic} if the ergodic equation~\eqref{eq:ergodic-equation} is solvable for all operators $g+T$ with $g \in \R^n$.
\end{definition}
The authors have given in~\cite{AGH15} necessary and sufficient conditions for a perfect-information stochastic game with finite state space and bounded payment function to be ergodic, conditions that we partly recall in the next subsection.
Note however that in the case of finite games, the existence of an invariant half-line for piecewise affine maps readily implies that the latter definition is equivalent to the fact that the mean-payoff vector is constant for all additive perturbation vectors of the transition payments.

We now state the main result of this paper, the proof of which is postponed to Subsection~\ref{sec:proof-main-thm}.
\begin{theorem}
  \label{thm:generic-uniqueness}
  Let $T: \R^n \to \R^n$ be the Shapley operator of a finite stochastic game with perfect information.
  Assume that the game is ergodic.
  Then, the space $\R^n$ can be covered by a polyhedral complex such that, for any additive perturbation vector $g \in \R^n$ in the interior of a full-dimensional cell, $g+T$  has a unique bias vector, up to an additive constant.

  In particular, the set of perturbation vectors $g$ for which $g+T$ has more than
  one bias vector, up to an additive constant, is included in a
  finite union of subspaces of codimension at least $1$. 
\end{theorem}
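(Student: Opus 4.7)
The plan is to combine Theorem~\ref{thm:nonexpansive-retract} (connectedness of fixed-point sets of nonexpansive maps) with nonlinear Perron--Frobenius / max-plus spectral theory for one-player operators. Since the game is ergodic, we have $\chi(g+T)=\lambda(g)\unit$ for some piecewise affine function $\lambda:\R^n\to\R$, and for every $g\in\R^n$ the bias set
\[
B(g):=\{u\in\R^n:(g+T)(u)=u+\lambda(g)\unit\}
\]
is nonempty. Because $g+T-\lambda(g)\unit$ is sup-norm nonexpansive, Theorem~\ref{thm:nonexpansive-retract} ensures that $B(g)$ is a retract of $\R^n$; in particular it is path-connected, and it is invariant under translation by $\R\unit$. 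The theorem will follow if I can show that, outside a finite union of affine hyperplanes of $\R^n$, $B(g)$ reduces to a single coset $u_0+\R\unit$.

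The first main step is a reduction to one-player subproblems. For each $\sigma\in\polMIN$, set
\[
B^{\sigma}(g):=\{v\in\R^n:(g+T^{\sigma})(v)=v+\lambda(g)\unit\}.
\]
Given any $u\in B(g)$, a policy $\sigma^*\in\polMIN$ attaining the minimum in~\eqref{eq:T-min} at $u$ satisfies $(g+T^{\sigma^*})(u)=u+\lambda(g)\unit$, so $u\in B^{\sigma^*}(g)$. Each nonempty $B^\sigma(g)$ is itself a retract of $\R^n$ by Theorem~\ref{thm:nonexpansive-retract}. Combining this with the piecewise affine structure of $T$ --- min-selectors in~\eqref{eq:T-min} are locally constant on the relative interior of each maximal cell of the polyhedral decomposition of $T$ --- and with path-connectedness of $B(g)$, I would propagate the local identification of $B(g)$ with $B^{\sigma^*}(g)$ inside such a cell into a global inclusion $B(g)\subseteq B^{\sigma^*}(g)$ for some suitably chosen $\sigma^*$. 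This reduces uniqueness of the bias for $g+T$ to the corresponding question for each one-player operator $g+T^\sigma$.

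For fixed $\sigma$, the operator $g+T^\sigma$ is the Shapley operator of a maximization Markov decision problem. Nonlinear Perron--Frobenius / max-plus spectral theory as in~\cite{AG03} describes $B^\sigma(g)$ as a max-plus (tropical) cone whose projective dimension equals, up to one, the number of \emph{critical} recurrence classes of $g+T^\sigma$, i.e., recurrence classes of optimal Markov chains $P^{\sigma\tau}$ whose average reward attains $\lambda(g)$. Uniqueness of the bias modulo $\R\unit$ is therefore equivalent to the existence of a single critical class. For any two pairs $(\sigma,\tau_1)$ and $(\sigma,\tau_2)$ with invariant probability measures $\pi_1,\pi_2$ on distinct recurrence classes, the condition that both classes are simultaneously critical translates into the affine equation
\[
\pi_1\cdot(g+r^{\sigma\tau_1})=\pi_2\cdot(g+r^{\sigma\tau_2})
\]
in $g\in\R^n$. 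Ranging over the finitely many such triples produces a finite union of affine hyperplanes, on the complement of which the bias of each $g+T^\sigma$, and hence of $g+T$ by the reduction above, is unique modulo $\R\unit$.

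The polyhedral complex in the statement is then obtained as a common refinement of (i) the piecewise-linear decomposition of $\R^n$ induced by $g\mapsto\lambda(g)$ and by the policy selectors appearing in~\eqref{eq:Shapley-operator}, and (ii) the hyperplanes identified above. The main obstacle I expect is the globalization step in the second paragraph: rigorously upgrading the local identification $B(g)=B^{\sigma^*}(g)$ to a genuine global inclusion without letting $B(g)$ ``spill'' into a different $B^{\sigma'}(g)$ at a cell boundary. It is precisely there that both ingredients must be used together --- the path-connectedness of $B(g)$ supplied by Theorem~\ref{thm:nonexpansive-retract}, and the combinatorial rigidity of the piecewise affine cell structure of $T$.
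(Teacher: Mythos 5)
Your strategy is essentially the paper's: reduce to the one-player operators $g+T^\sigma$ via the selection property \eqref{eq:T-min}, show that off a finite hyperplane arrangement each $g+T^\sigma$ has a unique maximizing measure and hence (by the critical-class theory of~\cite{AG03}) a one-dimensional eigenspace, and then invoke the connectedness of $\E(g+T)$ coming from Theorem~\ref{thm:nonexpansive-retract}. The hyperplanes you identify, namely $\<\pi_1,(g+r^{\sigma\tau_1})>=\<\pi_2,(g+r^{\sigma\tau_2})>$ for distinct invariant measures of distinct optimal chains, are exactly the walls of the complexes $\C^\sigma$ on which the piecewise affine maps $\lambda^\sigma$ fail to be affine, so the polyhedral complex you describe is the one in the proof.

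The step you flag as the main obstacle, however, is a detour that you do not need and that, as formulated, would be hard to make rigorous: there is no reason for the min-selector $\sigma^*$ to be constant along $B(g)$, and trying to ``propagate'' it across cell boundaries to obtain a global inclusion $B(g)\subseteq B^{\sigma^*}(g)$ is exactly the kind of argument that fails when the bias set meets several linearity regions of $T$. The correct assembly is simpler. From the selection argument you already have the inclusion $B(g)\subseteq\bigcup_{\sigma}B^{\sigma}(g)$, where the union is over the finitely many policies $\sigma$ for which some element of $B(g)$ lies in $B^\sigma(g)$; for $g$ in the interior of a full-dimensional cell each nonempty $B^\sigma(g)$ is a single line of direction $\unit$, so $B(g)$ sits inside a finite union of parallel lines. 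Projecting orthogonally onto the hyperplane $\{x\mid\<x,\unit>=0\}$ sends $B(g)$ to a set that is simultaneously finite and connected (connectedness by Theorem~\ref{thm:nonexpansive-retract}), hence a single point, and $B(g)$ is a single line. No global choice of $\sigma^*$ is ever needed. One further point to tighten: critical classes in~\cite{AG03} are defined through \emph{randomized} policies of \MAX, so the claim that two simultaneously critical classes force one of your affine equations indexed by \emph{deterministic} pairs $(\sigma,\tau_i)$ requires an argument (this is Lemma~\ref{lem:uniqueness-critical-class} in the paper: uniqueness of the maximizer in~\eqref{eq:eigenvalue-convex} over deterministic policies already forces uniqueness of the critical class). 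With these two repairs your outline becomes a complete proof, identical in substance to the paper's.
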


\begin{remark}
  This perturbation theorem bears some conceptual similarity
  with results of weak KAM theory; we refer to the monograph by Fathi~\cite{Fat}
  for more information. The latter theory deals
  with a class of one-player deterministic games with continuous
  time and space. 
  In this setting, the bias vector $u$ and the eigenvalue $\lambda$ are solution
  of an ergodic Hamilton-Jacobi PDE $H(x,D_x u) = \lambda$ where the Hamiltonian
  $(x,p)\mapsto H(x,p)$ is convex in the adjoint variable $p$. 
  One may consider
  the perturbation of a Hamiltonian by a potential,
  which amounts to replacing $H(x,p)$ by $H(x,p)+V(x)$, for
  some function $V$. This is similar to the replacement
  of the Shapley operator $T$ by the perturbed Shapley operator $g+T$
  in Theorem~\ref{thm:generic-uniqueness}.
  As observed by Figalli and Rifford in~\cite[Th.~4.2]{rifford},
  it follows from weak KAM theory results that under
  some assumptions, the solution $u$ of $V(x)+H(u,D_x u)=\lambda$ is unique
  up to an additive constant for a generic function $V$. 
  Theorem~\ref{thm:generic-uniqueness} shows that an analogous
  property is valid for finite two-player zero-sum stochastic
  games. We note however that Theorem~\ref{thm:generic-uniqueness}
  does not extend easily to the case of PDE, since zero-sum 
  games correspond to Hamilton-Jacobi PDE with a {\em nonconvex} Hamiltonian
  (to which current weak KAM methods do not apply). 
\end{remark}

\subsection{Nonlinear spectral theory}
\label{sec:nonlinear-spectral-theory}

The purpose of this subsection is to present or extend some known results in nonlinear spectral theory that will be useful to prove Theorem~\ref{thm:generic-uniqueness}, as well as further results in Section~\ref{sec:applications}.

\subsubsection{Recession operator and ergodicity conditions}

To characterize the ergodicity of a perfect-information stochastic game with finite state space, we shall use, along the lines of~\cite{GG04,AGH15}, 
the {\em recession operator} associated with the Shapley operator $T: \R^n \to \R^n$. 
This operator is a self-map of $\R^n$ defined by
\begin{equation}
  \label{eq:recession-operator}
  \widehat{T}(x) := \lim_{\alpha \to +\infty} \frac{T(\alpha x)}{\alpha} \enspace , \quad x \in \R^n \enspace.
\end{equation}

Its existence is not guaranteed in general, but it does exist when the game is finite, i.e., when $T$ is piecewise affine (and more generally when the payment function is bounded).
In this case, $\widehat{T}$ is the Shapley operator of a modified version of the stochastic
game represented by $T$ where the transition payments are set to $0$.
Indeed, if $T$ is given as in~\eqref{eq:Shapley-operator}, then it is readily seen that the $i$th coordinate map of $\widehat{T}$ is
\[
  \widehat{T}_i (x) = \min_{a \in A_i} \max_{b \in B_{i,a}} P_i^{a b} x \enspace , \quad x \in \R^n \enspace.
\]
In this case, we also easily get that
\begin{equation*}
  \label{eq:recession-operator-uniform-convergence}
  \| T - \widehat{T} \|_\infty \leq \|r\|_\infty \enspace ,
\end{equation*}
which implies in particular that the limit~\eqref{eq:recession-operator} defining $\widehat{T}$ is uniform in $x$.

Observe that if $\widehat{T}$ exists, then it inherits from $T$ the additive homogeneity and the monotonicity properties.
Furthermore, it is positively homogeneous, meaning that $\widehat{T}(\alpha x) = \alpha \widehat{T}(x)$ for every $\alpha \geq 0$.
As a consequence, any vector proportional to the unit vector of $\R^n$ is a fixed point of $\widehat{T}$.
We shall call such fixed points {\em trivial} fixed points.
The following result relates the ergodicity of a game to the fixed points of the recession operator associated with its Shapley operator. 
Since it applies to games with bounded payment function, it deals a fortiori with the case of finite games.
\begin{theorem}[{\cite[Th.~3.1]{AGH15}}]
  Let $\Gamma$ be a perfect-information stochastic game with finite state space and bounded payment function.
  Let $T: \R^n \to \R^n$ be the Shapley operator.
  The following are equivalent:
  \begin{enumerate}
    \item the recession operator has only trivial fixed points;
    \item the mean-payoff vector exists and is constant for all additive perturbation vectors of the transition payments;
    \item the ergodic equation~\eqref{eq:ergodic-equation} has a solution for all Shapley operators $g+T$, $g \in \R^n$.
  \end{enumerate}
  \label{thm:ergodicity-condition}
\end{theorem}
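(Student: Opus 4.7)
The plan is to establish the equivalence via the cyclic chain $(3) \Rightarrow (2) \Rightarrow (1) \Rightarrow (3)$, the last being the substantial direction.

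For $(3) \Rightarrow (2)$, if $T(u) = \lambda \unit + u$ then additive homogeneity iterates to $T^k(u) = u + k\lambda \unit$, and sup-norm nonexpansiveness propagates $T^k(u)/k \to \lambda \unit$ to $T^k(x)/k \to \lambda \unit$ for every $x$, so $\chi(T) = \lambda \unit$; applying the same argument to each $g+T$ yields a constant mean-payoff vector. For $(2) \Rightarrow (1)$ I argue by contrapositive: suppose $v$ is a fixed point of $\widehat T$ with $v \notin \R \unit$. Since $T$ is piecewise affine and $\widehat T(v) = \lim_{\alpha \to +\infty} T(\alpha v)/\alpha = v$, the ray $\alpha \mapsto \alpha v$ eventually lies in a single (unbounded) cell of the polyhedral decomposition on which $T$ acts as $x \mapsto Ax + b$ with $Av = v$, so $T(\alpha v) = \alpha v + b$ for $\alpha \geq \alpha_0$. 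Choosing the additive perturbation $g := v - b$ gives $(g+T)(\alpha v) = (\alpha + 1) v$ for $\alpha \geq \alpha_0$, hence iterating $(g+T)^k(\alpha_0 v) = (\alpha_0 + k) v$ and $\chi(g+T) = v$ (sup-norm nonexpansiveness extends the limit from $\alpha_0 v$ to any starting point), contradicting $(2)$ since $v \notin \R \unit$.

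The main direction $(1) \Rightarrow (3)$ proceeds by a homotopy/degree argument in the quotient space $X := \R^n / \R \unit$, on which $g+T$ and $\widehat T$ descend via additive homogeneity to nonexpansive self-maps $\overline{g+T}$ and $\overline{\widehat T}$; solving the ergodic equation for $g+T$ is then equivalent to finding a fixed point of $\overline{g+T}$ in $X$. Consider the homotopy $H_t := t(g+T) + (1-t) \widehat T$ for $t \in [0,1]$: a direct computation gives $\widehat{H_t} = \widehat T$ throughout, while by $(1)$ combined with positive homogeneity the fixed-point set of $\widehat T$ in $\R^n$ is exactly $\R \unit$, so $\bar H_0 = \overline{\widehat T}$ has $0$ as its unique fixed point in $X$. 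The crux is coercivity: fixed points of $\bar H_t$ stay in a ball of $X$ independent of $t \in [0,1]$. Otherwise, pick diverging $\bar H_{t_n}$-fixed points, lift to $x_n \in \R^n$ translated so that $\|x_n\|_\infty = \Hnorm{x_n} \to \infty$, write $H_{t_n}(x_n) = x_n + \gamma_n \unit$, normalize $y_n := x_n/\|x_n\|_\infty$, divide by $\|x_n\|_\infty$ and pass to the limit along a subsequence; the uniform bound $\|T - \widehat T\|_\infty \leq \|r\|_\infty$ gives $T(x_n)/\|x_n\|_\infty - \widehat T(y_n) = O(1/\|x_n\|_\infty)$, and any limit direction $y$ satisfies $\Hnorm{y} = 1$ together with $\widehat T(y) - y \in \R \unit$, producing a nonzero fixed point of $\overline{\widehat T}$ in $X$ and contradicting $(1)$. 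Brouwer degree on a sufficiently large ball of $X$ is then homotopy-invariant along $\bar H_t$ and equals $1$ at $t = 0$ (unique isolated fixed point), hence is nonzero at $t = 1$, producing the desired fixed point of $\overline{g+T}$.

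I expect the main obstacle to lie in the closing degree-theoretic step: since $\overline{\widehat T}$ is only nonexpansive (not smooth or locally injective at $0$), one must either invoke a fixed-point index tailored to piecewise affine nonexpansive maps --- available because the polyhedral decomposition of $T$ is finite --- or regularize $\overline{\widehat T}$ slightly to compute the local index at the isolated fixed point $0$. The coercivity argument itself also requires care in the quotient, where one must simultaneously track the diverging lift $x_n$ and the additive scalar correction $\gamma_n$ appearing in the fixed-point equation.
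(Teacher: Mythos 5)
First, a point of reference: the paper does not prove this statement at all --- it is quoted with a citation to \cite{AGH15} --- so your proposal can only be judged on its own terms, and two of its steps do not go through as written. In (2)$\Rightarrow$(1) you invoke piecewise affinity of $T$ (the ray $\alpha\mapsto\alpha v$ eventually lying in one cell on which $T$ acts as $x\mapsto Ax+b$), but the theorem assumes only a finite state space and a \emph{bounded} payment function; the action spaces may be infinite, and the paper stresses that this statement is strictly more general than the finite, piecewise affine case. The step can be repaired without that hypothesis: normalize the nontrivial fixed point $v$ of $\widehat{T}$ so that $\min_i v_i=0$ and $\max_i v_i=1$, set $C:=\|r\|_\infty$ and $g:=\beta v$ with $\beta>2C$, and sandwich $g+\widehat{T}-C\unit\leq g+T\leq g+\widehat{T}+C\unit$. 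By monotonicity, additive and positive homogeneity of $\widehat{T}$, and $\widehat{T}(v)=v$, the two comparison maps iterate explicitly ($k$th iterate at $0$ equal to $k\beta v\mp kC\unit$), giving $\beta v_i-C\leq\liminf_k((g+T)^k(0))_i/k$ and $\limsup_k((g+T)^k(0))_i/k\leq\beta v_i+C$; hence the mean-payoff vector of $g+T$, if it exists, differs on coordinates with $v_i=1$ and $v_i=0$, contradicting (2).

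In (1)$\Rightarrow$(3), the architecture (quotient $X=\R^n/\R\unit$, the homotopy $H_t$ with $\widehat{H_t}=\widehat{T}$, coercivity from $\|T-\widehat{T}\|_\infty\leq\|r\|_\infty$, homotopy invariance of the Brouwer degree) is sound, but the decisive step --- that the degree of $I-\overline{\widehat{T}}$ on a large ball is $1$ --- is precisely what you leave open, and neither suggested fix is developed (a ``piecewise affine'' index is again unavailable in this generality, and an isolated fixed point of a continuous map can perfectly well have index $0$, so the claim needs an argument). It closes without regularization: $\widehat{T}(0)=0$ and $\overline{\widehat{T}}$ is nonexpansive for the quotient norm, so along the linear homotopy $s\mapsto s\,\overline{\widehat{T}}$, $s\in[0,1]$, any fixed point with $s<1$ satisfies $\|\bar{x}\|\leq s\|\bar{x}\|$, forcing $\bar{x}=0$, while at $s=1$ uniqueness is hypothesis (1); hence the degree equals that of the identity, namely $1$. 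A second, smaller omission: both in identifying the fixed points of $\bar{H}_0$ and in your coercivity limit, a fixed point of $\overline{\widehat{T}}$ only gives $\widehat{T}(y)=y+c\unit$ for some scalar $c$, which (1) does not directly exclude; you must add that $\widehat{T}(0)=0$ forces the additive eigenvalue of $\widehat{T}$ to vanish ($\lim_k\widehat{T}^k(0)/k=0$), so $c=0$ and $y$ is a genuine nontrivial fixed point. With these repairs your degree-theoretic route is complete, and it is a genuinely different argument from the one in \cite{AGH15} that the paper relies on; the direction (3)$\Rightarrow$(2) is fine as written.
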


\subsubsection{Characterization of the ergodic constant}
\label{sec:ergodic-constant-characterization}

Let $\Gamma$ be a finite stochastic game with perfect information,
and let $T$ be its Shapley operator.
We shall make use of
the reduced Shapley operator $T^\sigma: \R^n \to \R^n$ associated with the policy $\sigma \in \polMIN$ of player \MIN, defined in~\eqref{e-def-Tsigma}.
We also have, for all $x \in \R^n$,
\begin{equation}
  \label{eq:stochastic-control-operator}
  T^\sigma(x) = \max_{\tau \in \polMAX} \big( r^{\sigma \tau} + P^{\sigma \tau} x \big) \enspace ,
\end{equation}
where $r^{\sigma \tau}$ is the vector in $\R^n$ whose $i$th entry is defined by
$r^{\sigma \tau}_i = r^{\sigma(i) \tau(i,\sigma(i))}_i$ and $P^{\sigma \tau}$ is
the $n \times n$ stochastic matrix whose $i$th row is given by
$P^{\sigma \tau}_i = P^{\sigma(i) \tau(i,\sigma(i))}_i$.
Observe that $T^\sigma$ is convex (componentwise), monotone and additively homogeneous.

If $P$ is a $n \times n$ stochastic matrix, the {\em directed graph} associated with $P$ is composed of the nodes $1,\dots,n$ and of the arcs $(i,j)$, $1 \leq i,j \leq n$, such that $P_{i j} > 0$.
A {\em class} of the matrix $P$ is a maximal set of nodes such that every two nodes in the set are connected by a directed path.
A class is said to be {\em final} if every path starting from a node of this class remains in it.
Let us denote by $\M(P)$ the set of invariant probability measures of $P$, i.e., the set of stochastic (column) vectors $m \in \R^n$ such that $\transpose{m} \, P = \transpose{m}$.
Given a final class $C$ of $P$, there is a unique invariant probability measure $m \in \M(P)$
the support of which is $C$, i.e., $\{1 \leq  i \leq n \mid m_i > 0 \} = C$.
Moreover, the set $\M(P)$ is the convex hull of such measures.
Since the number of final classes of $P$ is finite, $\M(P)$ is a convex polytope.

Let us denote by $\chibar(T)$ the {\em upper mean payoff} of $T$, i.e., the greatest entry
of the mean-payoff vector $\chi(T)$.
We next give a characterization of $\chibar(T)$.
Obviously, if $T$ satisfies the ergodic equation~\eqref{eq:ergodic-equation},
then $\chi(T)$ is a constant vector and the eigenvalue is $\lambda(T) = \chibar(T)$.
In the sequel, we denote by $\<x,y>$ the standard scalar product in $\R^n$
of two vectors $x$ and $y$.

\begin{lemma}
  \label{lem:chibar-characterization}
  Let $T: \R^n \to \R^n$ be the Shapley operator of a finite stochastic game with perfect information $\Gamma$.
  Then the upper mean payoff of $T$ is given by
  \begin{equation}
    \label{eq:chibar}
    \chibar(T) = \min_{\sigma \in \polMIN} \max \{ \< m , r^{\sigma \tau} > \mid \tau \in \polMAX, \; m \in \M(P^{\sigma \tau}) \} \enspace .
  \end{equation}
\end{lemma}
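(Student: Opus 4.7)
The plan is to decouple the min--max in the formula by fixing the policy of player \MIN\ first, reducing to a one-player subproblem, and then fixing the policy of player \MAX\ to reduce to an affine operator whose mean payoff is a classical Markov-chain quantity.

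\textbf{Step 1: Reduce to $\chibar(T) = \min_{\sigma \in \polMIN} \chibar(T^\sigma)$.} Relation~\eqref{eq:T-min} gives $T(x) \leq T^\sigma(x)$ pointwise for every $\sigma \in \polMIN$, and since $T^\sigma$ is monotone, an easy induction yields $T^k(0) \leq (T^\sigma)^k(0)$ for all $k$. Dividing by $k$ and passing to the limit, $\chi(T) \leq \chi(T^\sigma)$ componentwise, hence $\chibar(T) \leq \chibar(T^\sigma)$. For the matching upper bound, apply Theorem~\ref{th-inv}: there exist an invariant half-line $\alpha \mapsto u+\alpha\nu$ of $T$ and a policy $\sigma^* \in \polMIN$ such that $T(u+\alpha\nu)=T^{\sigma^*}(u+\alpha\nu)=u+(\alpha+1)\nu$ for all large enough $\alpha$. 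The half-line is then also invariant for $T^{\sigma^*}$, so $\chi(T^{\sigma^*})=\nu=\chi(T)$, and therefore $\chibar(T)=\chibar(T^{\sigma^*})$.

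\textbf{Step 2: Reduce $\chibar(T^\sigma) = \max_{\tau \in \polMAX} \chibar(T^{\sigma\tau})$.} The reduced operator $T^\sigma$ is itself additively homogeneous, monotone, sup-norm nonexpansive and piecewise affine, and it admits the max-representation $T^\sigma = \max_{\tau \in \polMAX} T^{\sigma\tau}$, where $T^{\sigma\tau}(x) := r^{\sigma\tau}+P^{\sigma\tau}x$ is affine; indeed this is~\eqref{eq:stochastic-control-operator}. One may regard $T^\sigma$ as the Shapley operator of a (degenerate) perfect-information game in which only player \MAX\ has a choice of action. The dual of the argument of Step~1, based on Kohlberg's invariant half-line and the max-representation, gives $\chi(T^\sigma) \geq \chi(T^{\sigma\tau})$ componentwise with equality for some $\tau^*$, whence $\chibar(T^\sigma) = \max_\tau \chibar(T^{\sigma\tau})$.

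\textbf{Step 3: Compute $\chibar(T^{\sigma\tau})$ in terms of $\M(P^{\sigma\tau})$.} For the affine operator $T^{\sigma\tau}(x) = r^{\sigma\tau}+P^{\sigma\tau}x$, iteration gives $(T^{\sigma\tau})^k(0) = \sum_{\ell=0}^{k-1} (P^{\sigma\tau})^\ell r^{\sigma\tau}$, and the existence of the Cesaro limit $P^{*,\sigma\tau} := \lim_k \frac{1}{k}\sum_{\ell=0}^{k-1}(P^{\sigma\tau})^\ell$ of a finite stochastic matrix yields $\chi(T^{\sigma\tau}) = P^{*,\sigma\tau}\, r^{\sigma\tau}$. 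Each row of $P^{*,\sigma\tau}$ is a convex combination of the extreme invariant measures $m_C$ of $\M(P^{\sigma\tau})$ (one per final class $C$), with weights equal to the absorption probabilities from the corresponding state into $C$. Therefore $(P^{*,\sigma\tau} r^{\sigma\tau})_i \leq \max_C \langle m_C, r^{\sigma\tau}\rangle$ with equality whenever $i$ belongs to a class $C$ achieving the maximum. Since $\M(P^{\sigma\tau})$ is the convex hull of the $m_C$, this common value equals $\max\{\langle m, r^{\sigma\tau}\rangle : m \in \M(P^{\sigma\tau})\}$. Combining Steps~1--3 gives~\eqref{eq:chibar}.

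The conceptual heart of the argument lies in Step~1, whose inequality $\chi(T) \leq \chi(T^\sigma)$ is elementary but whose reverse inequality depends on the refined information provided by Theorem~\ref{th-inv} (existence of an invariant half-line \emph{aligned} with a pure stationary policy of \MIN). The most delicate bookkeeping is in Step~3, where one must track which initial states attain the maximum over the Cesaro limit; this is routine but should be spelled out carefully so that the passage from rows of $P^{*,\sigma\tau}$ to the polytope $\M(P^{\sigma\tau})$ is unambiguous. Step~2 is essentially a dualization of Step~1 and should not raise additional difficulty.
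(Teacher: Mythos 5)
Your proof is correct, and it takes a genuinely different route from the paper's in its second half. The first step, reducing to $\chibar(T)=\min_{\sigma\in\polMIN}\chibar(T^\sigma)$, is essentially the paper's; the only difference is that you obtain the reverse inequality by noting that the invariant half-line of $T$ is also invariant for $T^{\sigma^*}$ with $\sigma^*$ as in Theorem~\ref{th-inv}, whereas the paper evaluates $T$ at a single point $u$ of the half-line, picks $\sigma$ with $T(u)=T^\sigma(u)\leq u+\chibar(T)\unit$, and invokes the Collatz--Wielandt formula $\chibar(F)=\inf\{\mu\mid\exists x,\ F(x)\leq\mu\unit+x\}$; both are valid, and yours gives the slightly stronger conclusion $\chi(T^{\sigma^*})=\chi(T)$. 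For the one-player part the paper does \emph{not} pass through the further decomposition $\chibar(T^\sigma)=\max_{\tau}\chibar(T^{\sigma\tau})$: it keeps $T^\sigma$ intact, proves $\mu^\sigma\leq\chibar(T^\sigma)$ by pairing the inequality $r^{\sigma\tau}+P^{\sigma\tau}v\leq v+\chibar(T^\sigma)\unit$ with an arbitrary invariant measure, and proves the reverse by selecting the $\tau$ realizing the germ of $T^\sigma$ along its invariant half-line, from which $P^{\sigma\tau}\chi=\chi$, so that $\chi$ is harmonic for $P^{\sigma\tau}$ and therefore constant and maximal on some final class carrying an invariant measure $m$ with $\<m,r^{\sigma\tau}>=\chibar(T^\sigma)$. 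Your route instead dualizes Step~1 to isolate a single affine operator and computes $\chi(T^{\sigma\tau})=P^{*,\sigma\tau}r^{\sigma\tau}$ via the Cesaro limit, using the classical description of the rows of $P^{*,\sigma\tau}$ as convex combinations of the extreme invariant measures weighted by absorption probabilities. This is more self-contained, elementary Markov-chain theory; the paper's harmonic-vector argument is shorter because it avoids introducing the Cesaro limit and absorption probabilities and stays within the nonlinear Perron--Frobenius toolkit used throughout the paper. Both arguments correctly observe that, the objective being linear, the maximum over the polytope $\M(P^{\sigma\tau})$ is attained at an extreme point.
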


\begin{proof}
  First, observe that for all policies $\sigma \in \polMIN$, we have $T \leq T^\sigma$,
  which yields, by monotonicity of the operators, $\chi(T) \leq \chi(T^\sigma)$,
  and in particular $\chibar(T) \leq \chibar(T^\sigma)$.

  Considering an invariant half-line~\eqref{eq:invariant-half-line} of $T$, we know that
  there exist a vector $u \in \R^n$ such that $T(u) = u + \chi(T)$.
  Let $\sigma \in \polMIN$ be a policy of player \MIN such that $T(u) = T^\sigma(u)$.
  Then, we have $T^\sigma(u) \leq u + \chibar(T) \unit$.
  Furthermore, we know by a Collatz-Wielandt formula (see~\cite[Prop.~1]{GG04}) that,
  for any monotone and additively homogeneous map $F: \R^n \to \R^n$, we have
  \[
    \chibar(F) = \inf \{ \mu \in \R \mid \exists x \in \R^n, \; F(x) \leq \mu \unit + x \} \enspace .
  \]
  So, we deduce that $\chibar(T^\sigma) \leq \chibar(T)$, and finally that
  \[
    \chibar(T) = \min_{\sigma \in \polMIN} \chibar(T^\sigma) \enspace .
  \]

  Now we fix a policy $\sigma$ of player \MIN, and we let $\chi := \chi(T^\sigma)$ and
  \[
    \mu^{\sigma} := \max \{ \< m , r^{\sigma \tau} > \mid
    \tau \in \polMAX, \; m \in \M(P^{\sigma \tau}) \} \enspace .
  \]
  Since $T^\sigma$ has an invariant half-line with direction $\chi$, there is a vector $v \in \R^n$
  such that $T^\sigma(v + \alpha \chi) = v + (\alpha+1) \chi$ for all $\alpha \geq 0$.
  In particular, for every policy $\tau \in \polMAX$ we have
  \[
    r^{\sigma \tau} + P^{\sigma \tau} v \leq T^\sigma(v) =
    v + \chi \leq v + \chibar(T^\sigma) \unit \enspace .
  \]
Multiplying this inequality by any $m \in \M(P^{\sigma \tau})$,
 we deduce that $\mu^\sigma \leq \chibar(T^\sigma)$.

Furthermore, since the germs of affine functions from $\R$ to $\R$ at infinity
  are totally ordered, there exists a policy $\tau \in \polMAX$ such that
  \[
    T^\sigma(v + \alpha \chi) = r^{\sigma \tau} + P^{\sigma \tau} (v + \alpha \chi)
  \]
  for all $\alpha$ large enough.
  In particular, since the equality
  \begin{equation}
    \label{eq:halfline-linear-map}
    v + (\alpha+1) \chi = r^{\sigma \tau} + P^{\sigma \tau} (v + \alpha \chi)
  \end{equation}
  holds for all $\alpha$ large enough, we get that $P^{\sigma \tau} \chi = \chi$.
  Thus, $\chi$ is an {\em harmonic vector} for the stochastic matrix $P^{\sigma \tau}$,
  and as such it is constant on any final class of $P^{\sigma \tau}$,
  and its maximum is attained on one of these final class (see~\cite[Lem.~2.9]{AG03}).
  Let $m \in \M(P^{\sigma \tau})$ be the invariant probability measure associated
  with a final class $C$ of $P^{\sigma \tau}$ such that $\chi_i = \chibar(T^\sigma)$ for all $i \in C$.
  Then, we deduce from~\eqref{eq:halfline-linear-map} that $\<m,r^{\sigma \tau}> = \chibar(T^\sigma)$,
  which yields $\mu^\alpha \geq \chibar(T^\sigma)$ and finally $\mu^\alpha = \chibar(T^\sigma)$.
\end{proof}

Let us mention that in~\eqref{eq:chibar}, the set of invariant probability measures of
$P^{\sigma \tau}$, $\M(P^{\sigma \tau})$, may be replaced by the set of its extreme points,
denoted by $\M^*(P^{\sigma \tau})$, since it is a convex polytope.
Note that $\M^*(P^{\sigma \tau})$ is the set of invariant probability measures, the support
of which are the final classes of $P^{\sigma \tau}$.

\subsubsection{Structure of the eigenspace}

An ingredient of our approach is a result of \cite{AG03} which describes the eigenspace
of {\em one-player} Shapley operators $T$.
We next recall this result.
We assume that $T$ arises from a game in which only player \MAX has nontrivial actions,
so that player \MIN has only one possible policy.
We also assume that $T$ satisfies the ergodic equation~\eqref{eq:ergodic-equation}
so that its eigenvalue $\lambda(T)$ is equal to the entries of the mean-payoff vector
$\chi(T)$ which is constant.
In this case, the representation of the upper-mean payoff $\chibar(T)$, hence of
the eigenvalue $\lambda(T)$, in Lemma~\ref{lem:chibar-characterization} simplifies
as the dependency in $\sigma$ can be dropped, and we arrive, with a trivial simplification
of the notation, to
\begin{align}
  \lambda(T) = \max \{ \< m , r^{\tau} > \mid \tau \in \polMAX, \; m \in \M(P^{ \tau}) \} \enspace .
  \label{eq:eigenvalue-convex}
\end{align}

It is shown in~\cite{AG03} that the dimension of the eigenspace of $T$ is controlled
by the number of {\em critical classes}.
The latter can be defined through the notion of maximizing measures, which rely on
{\em randomized} policies.
For every state $i$, such a policy $\tau$ assigns to every action $b \in B_i$ a probability
$\tau(i,b)$ that this action is selected.
This leads to the stochastic matrix $P^\tau$ with entries
$P^\tau_{i j} = \sum_{b \in B_i} P^{b}_{i j} \, \tau(i,b)$, and to the payment vector
$r^{\tau}$ with entries $r^\tau_i = \sum_{b \in B_i} r^{b}_{i} \, \tau(i,b)$.
Then, the maximum in~\eqref{eq:eigenvalue-convex} is unchanged if it is taken over the set
of randomized policies $\tau$ and of invariant probability measures $m$ of the corresponding
stochastic matrix $P^\tau$ (see~\cite[Prop.~7.2]{AG03}).
A measure $m$ is {\em maximizing} if $\lambda(T) = \<m,r^\tau>$ for some randomized policy
$\tau$, and if $m \in \M^*(P^\tau)$, that is, the support of $m$ is a final class of $P^\tau$.
A subset $I\subset \{1,\dots,n\}$ is a {\em critical class} if there exists
a maximizing measure $m$ whose support is $I$, i.e., $I = \{ 1 \leq i \leq n \mid m_i>0 \}$,
and if $I$ is a maximal element with respect to inclusion among all the subsets of
$\{1,\dots,n\}$ which arise in this way.
Note that critical classes are disjoint.

The following lemma gives a sufficient condition for the critical class to be unique,
Note that the result does not require randomized stationary strategies.
\begin{lemma}
  Let $T: \R^n \to \R^n$ be a convex, monotone and additively homogeneous map.
  Suppose that the ergodic equation~\eqref{eq:ergodic-equation} is solvable.
  If there is a unique probability measure which attains the maximum
  in~\eqref{eq:eigenvalue-convex}, then $T$ has a unique critical class.
  \label{lem:uniqueness-critical-class}
\end{lemma}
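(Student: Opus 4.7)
The plan is to argue by contradiction. Suppose $T$ admits two distinct critical classes $I_1, I_2$; we will exhibit two distinct probability measures that both attain the maximum in~\eqref{eq:eigenvalue-convex}, contradicting the hypothesis. By the definition of critical class, we can fix randomized policies $\tau_1, \tau_2$ and measures $m_\ell \in \M^*(P^{\tau_\ell})$ (for $\ell = 1,2$) with $\mathrm{supp}(m_\ell) = I_\ell$ and $\<m_\ell, r^{\tau_\ell}> = \lambda(T)$. The first step is to replace each randomized $\tau_\ell$ by a \emph{deterministic} policy $\tau_\ell'$ that still admits an invariant measure supported inside $I_\ell$ and attaining $\lambda(T)$.

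The second step is a complementary slackness argument. Fix an eigenvector $u$ with $T(u) = \lambda(T)\unit + u$. Since $r_i^\tau + P_i^\tau u$ is the convex average over $b$ of quantities whose maximum is $T_i(u)$, we have $r^{\tau_\ell} + P^{\tau_\ell} u \leq \lambda(T)\unit + u$ componentwise. Pairing with $m_\ell$ and using $\transpose{m_\ell} P^{\tau_\ell} = \transpose{m_\ell}$ together with the optimality $\<m_\ell, r^{\tau_\ell}> = \lambda(T)$ forces equality on $I_\ell = \mathrm{supp}(m_\ell)$. Two conclusions follow: for every $i \in I_\ell$ and every action $b$ with $\tau_\ell(i,b) > 0$, the pure action $b$ is itself optimal, i.e.\ $r_i^b + P_i^b u = \lambda(T) + u_i$; and $P_{ij}^b > 0 \Rightarrow j \in I_\ell$, since otherwise $j$ would be accessible from $i$ under $P^{\tau_\ell}$, contradicting the finality of $I_\ell$.

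The third step then concludes. Define $\tau_\ell'(i)$ to be any element of $\mathrm{supp}(\tau_\ell(i,\cdot))$ for $i \in I_\ell$, and extend arbitrarily outside $I_\ell$. The previous step shows that $I_\ell$ is $P^{\tau_\ell'}$-invariant, so it contains some final class $C_\ell' \subseteq I_\ell$ of $P^{\tau_\ell'}$; let $m_\ell' \in \M^*(P^{\tau_\ell'})$ be the invariant probability measure supported on $C_\ell'$. On $C_\ell'$ we have $r_i^{\tau_\ell'(i)} + P_i^{\tau_\ell'(i)} u = \lambda(T) + u_i$, so pairing with $m_\ell'$ and using its invariance yields $\<m_\ell', r^{\tau_\ell'}> = \lambda(T)$. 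Thus $(\tau_1', m_1')$ and $(\tau_2', m_2')$ are both maximizers in the deterministic formulation~\eqref{eq:eigenvalue-convex}, with $C_1' \cap C_2' \subseteq I_1 \cap I_2 = \emptyset$, hence $m_1' \neq m_2'$, the desired contradiction. The main obstacle I foresee is precisely this passage from randomized to deterministic: the definition of critical class uses randomized strategies while the hypothesis is phrased in the deterministic setting, and the complementary slackness computation in step two is exactly what bridges them.
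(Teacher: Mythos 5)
Your proof is correct and follows essentially the same route as the paper's: the complementary-slackness computation with an eigenvector $u$ and the invariance of $m_\ell$, the observation that every pure action in the support of the randomized policy is then individually optimal on the critical class, and the passage to a deterministic policy whose final class inside the critical class carries a maximizing measure for~\eqref{eq:eigenvalue-convex}. The only difference is cosmetic: you frame it as a contradiction between two critical classes, while the paper shows directly that any critical class contains the support of the unique maximizer and invokes disjointness of critical classes.
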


\begin{proof}
  Let $C$ be a critical class of $T$.
  There exists a randomized policy $\tau$ such that $C$ is a final class of $P^\tau$ and
  the unique invariant probability measure $m$ with support $C$ satisfies
  $\lambda = \lambda(T) = \<m,r^\tau>$.

  Let $u$ be an eigenvector.
  For every $i \in \{1,\dots,n\}$, we have $\lambda + u_i - r^\tau_i - P^\tau_i u \geq 0$.
  Furthermore, since $\transpose{m} \, P^\tau = \transpose{m}$, we also have
  \[
    \sum_{i \in C} m_i \, ( \lambda + u_i - r^\tau_i - P^\tau_i u) = 
    \lambda + \<m,u> - \<m,r^\tau> - \<m,(P^\tau u)>  = 0 \enspace .
  \]
  This yields that $\lambda + u_i - r^\tau_i - P^\tau_i u = 0$ for every $i \in C$.

  Let $\tau'$ be a deterministic policy such that $\tau'(i) = b_i$ with $\tau(i,b_i) > 0$
  for all indices $i$.
  Since $P^{\tau}$ can be written as a convex combination
  with positive coefficients of the $P^{\tau'}$ with such $\tau'$,
  we deduce that $C$ contains a final class $C'$ of $P^{\tau'}$, and
  so there exists an invariant probability measure $m'$ of $P^{\tau'}$ whose support $C'$ 
  is included in $C$.
  Similarly $r^{\tau}$ is a convex combination of the $r^{\tau'}$
  with same coefficients as for $P^{\tau}$. Hence,
  we deduce by a similar argument as above that
  $\lambda + u_i - r^{\tau'}_i - P^{\tau'}_i u = 0$
  for every $i \in C$.
  Multiplying by $m'$, we obtain that $\lambda = \<m',r^{\tau'}>$,
  which implies that $m'$ attains the maximum in~\eqref{eq:eigenvalue-convex}.
  Since such a probability measure is unique, $m'$ and $C'$ are the same for all
  $\tau'$ as above, hence $C=C'$ is the support of the unique
  probability measure attaining the maximum in~\eqref{eq:eigenvalue-convex}.
\end{proof}

We now describe the eigenspace of $T$.
\begin{theorem}[{\cite[Th.~1.1]{AG03}}]
  \label{thm:convex-spectral-theorem}
  Let $T: \R^n \to \R^n$ be a convex, monotone and additively homogeneous map.
  Suppose that the ergodic equation~\eqref{eq:ergodic-equation} is solvable.
  and let $C\subset \{1,\dots,n\}$ be the set of elements of critical classes.
  We denote by $\pi_C$ the restriction map $\R^n\to \R^C$,
  $x\mapsto (x_i)_{i\in C}$.
  Then, the set of eigenvectors of $T$, denoted by $\E(T)$, satisfies the following properties:
  \begin{enumerate}
    \item Every element $x$ of $\E(T)$ is uniquely determined by its restriction $\pi_C(x)$.
    \item The set $\pi_C(\E(T))$ is convex and its dimension is at most equal to the number
      of critical classes of $T$; moreover, the latter bound is attained when
      $T$ is piecewise affine. 
  \end{enumerate}
\end{theorem}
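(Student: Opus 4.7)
The plan is to derive both claims from one structural observation. For any $u\in\E(T)$, convexity of $T$ yields a (possibly randomized) selector $\tau=\tau_u$ such that $T(u)=r^{\tau}+P^{\tau}u$, and the tangent inequality $T(y)\geq r^{\tau}+P^{\tau}y$ holds for every $y\in\R^n$. Pairing the equality $\lambda\unit + u = r^{\tau}+P^{\tau}u$ with any invariant probability measure $m$ of $P^{\tau}$ gives $\<m,r^{\tau}>=\lambda=\lambda(T)$, so every extreme invariant measure of $P^{\tau}$ is maximizing in~\eqref{eq:eigenvalue-convex}, and every final class of $P^{\tau}$ is contained in a critical class, hence in $C$.

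Granted this, claim (i) follows from a maximum principle. Take $u,v\in\E(T)$ with $\pi_C(u)=\pi_C(v)$, set $w:=u-v$ and choose $\tau=\tau_u$. The tangent inequality at $v$ and the equality at $u$, together with $T(u)-T(v)=w$, give $P^{\tau}w\geq w$. The standard maximum principle for stochastic matrices propagates $\max w$ along $P^{\tau}$-trajectories to a final class of $P^{\tau}$, which lies in $C$ where $w\equiv 0$, so $\max w=0$. The symmetric argument with $\tau_v$ yields $\min w=0$, hence $w\equiv 0$.

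For the upper bound on $\dim\pi_C(\E(T))$ in (ii), fix a critical class $C_k$ with a randomized maximizing policy $\tau_k$ and invariant measure $m_k\in\M^{\ast}(P^{\tau_k})$ of support $C_k$. The invariance $\transpose{m_k}P^{\tau_k}=\transpose{m_k}$ forces $P^{\tau_k}_{ij}=0$ for $i\in C_k$, $j\notin C_k$, so $P^{\tau_k}|_{C_k}$ is an irreducible stochastic matrix on $C_k$. Repeating the argument of Lemma~\ref{lem:uniqueness-critical-class} for every $u\in\E(T)$ yields the closed linear system
\[
(I-P^{\tau_k}|_{C_k})\,u|_{C_k}\;=\;r^{\tau_k}|_{C_k}-\lambda\unit|_{C_k},
\]
with $1$-dimensional affine solution set by Perron--Frobenius. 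Hence $\pi_C(\E(T))$ sits inside a product of affine lines and has dimension at most the number of critical classes. For convexity, take $u,v\in\E(T)$ and set $u_t:=(1-t)u+tv$: convexity of $T$ gives $T(u_t)\leq u_t+\lambda\unit$, while the per-class equations above imply equality on $C$. The sequence $u_t^{(k)}:=T^k(u_t)-k\lambda\unit$ is then componentwise decreasing by the former and, by sup-norm nonexpansiveness of $T$ combined with $T^k(u)=u+k\lambda\unit$, bounded below; the pointwise limit $\bar{u}_t\in\E(T)$ preserves the value on $C$, giving $\pi_C(\bar{u}_t)=(1-t)\pi_C(u)+t\pi_C(v)$.

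The hard part, which I expect to dominate the technical work, is the sharpness of the dimension bound in the piecewise affine case. My plan is to exhibit, for each critical class $C_k$ in turn, a one-parameter family of eigenvectors whose restrictions to $C_k$ vary along the affine line while the restrictions to the other critical classes stay frozen. The natural device is a small additive perturbation $\varepsilon g_k$ supported off $C_k$: the ergodicity hypothesis ensures that $\varepsilon g_k+T$ admits a bias vector, and piecewise affinity lets one select a polyhedral cell on which the perturbed eigenvector depends affinely on $\varepsilon$. Passing to the limit $\varepsilon\to 0$ within that cell produces an element of $\E(T)$ with the prescribed shift on $C_k$; the subtle point is to certify independence of the shifts across distinct critical classes, which should follow from the finiteness of the selectors of $T$ and a careful choice of polyhedral cell.
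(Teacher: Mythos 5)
The paper does not prove this statement; it is imported verbatim from \cite[Th.~1.1]{AG03}, so there is no internal proof to compare against. Judged on its own terms, your argument for part~(i), for the convexity of $\pi_C(\E(T))$, and for the upper bound on its dimension is essentially correct and is the standard route: the subgradient of a convex, monotone, additively homogeneous map at an eigenvector is a stochastic matrix whose final classes are supports of maximizing measures, hence lie in $C$; the maximum principle then forces $u=v$ when $\pi_C(u)=\pi_C(v)$; the per-class Cramer systems $(I-P^{\tau_k}|_{C_k})u|_{C_k}=r^{\tau_k}|_{C_k}-\lambda\unit|_{C_k}$ with irreducible $P^{\tau_k}|_{C_k}$ confine each $\pi_{C_k}(\E(T))$ to an affine line; and your iteration $T^k(u_t)-k\lambda\unit$, decreasing by convexity and bounded below by nonexpansiveness, correctly produces an eigenvector with the interpolated restriction to $C$. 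These pieces are complete.

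The genuine gap is the last clause of~(ii), the attainment of the bound for piecewise affine $T$, which you only sketch. Two things break in the sketch as written. First, you invoke ``the ergodicity hypothesis'' to guarantee that $\varepsilon g_k+T$ has a bias vector, but the theorem assumes only that the ergodic equation is solvable for $T$ itself, not for its additive perturbations; solvability is not stable under perturbation without an extra assumption (cf.\ Theorem~\ref{thm:ergodicity-condition}, which is precisely about when it is), so the perturbed eigenvector you want to deform and pass to the limit may simply not exist. Second, even granting existence, the crux --- that one can move $\pi_{C_k}(u)$ along its affine line while freezing $\pi_{C_j}(u)$ for $j\neq k$, so that the shifts on distinct critical classes are independent --- is exactly what needs to be proved, and ``should follow from the finiteness of the selectors and a careful choice of polyhedral cell'' is not an argument. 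A workable route, closer to what is actually done in \cite{AG03}, is to perturb the \emph{eigenvector} rather than the payments: starting from $u\in\E(T)$, set $u^\varepsilon:=u+\varepsilon\,\mathbf{1}_{C_k}$ and show, using that $T$ is piecewise affine and that $C_k$ is closed and critical for some tangent matrix, that for $\varepsilon$ small one has $T(u^\varepsilon)\geq \lambda\unit+u^\varepsilon$ (or $\leq$, for the other sign), so that the monotone limit of $T^m(u^\varepsilon)-m\lambda\unit$ exists, lies in $\E(T)$, and restricts on $C_k$ to $u|_{C_k}+\varepsilon\unit$ while agreeing with $u$ on the other critical classes. Until something of this kind is carried out, the sharpness claim is unproved.
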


In particular, combined with Lemma~\ref{lem:uniqueness-critical-class}, the above result
yields the following.
\begin{corollary}
  Let $T: \R^n \to \R^n$ be a convex, monotone and additively homogeneous map.
  Suppose that the ergodic equation~\eqref{eq:ergodic-equation} is solvable.
  If there is a unique probability measure which attains the maximum
  in~\eqref{eq:eigenvalue-convex}, then $T$ has a unique eigenvector up to an additive constant.
  \label{coro:uniqueness-eigenvector}
\end{corollary}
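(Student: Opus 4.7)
The plan is to combine the two immediately preceding results in a straightforward way. By Lemma~\ref{lem:uniqueness-critical-class}, the hypothesis that the maximum in~\eqref{eq:eigenvalue-convex} is attained by a single probability measure forces $T$ to have exactly one critical class, which I will denote by $C$. I would then invoke Theorem~\ref{thm:convex-spectral-theorem}: every eigenvector $x \in \E(T)$ is uniquely determined by its restriction $\pi_C(x) \in \R^C$, and the convex set $\pi_C(\E(T))$ has dimension at most the number of critical classes, hence at most $1$ here.

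To finish I would use the additive homogeneity of $T$: if $u \in \E(T)$, then $u + \alpha \unit \in \E(T)$ for every $\alpha \in \R$, and the restrictions $\pi_C(u) + \alpha\, \pi_C(\unit)$ sweep out a $1$-dimensional affine line in $\R^C$ (since $\pi_C(\unit)$ is the all-ones vector on $C$). Combined with the upper bound of $1$ on the dimension of $\pi_C(\E(T))$, this forces $\pi_C(\E(T))$ to be exactly this line. Injectivity of $\pi_C$ on $\E(T)$, which is item~(i) of Theorem~\ref{thm:convex-spectral-theorem}, then implies that any two eigenvectors of $T$ differ by a scalar multiple of $\unit$.

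I do not anticipate a real obstacle here: the substantive work is already packaged in Lemma~\ref{lem:uniqueness-critical-class} and Theorem~\ref{thm:convex-spectral-theorem}. The only point that warrants care is the last sentence of the previous paragraph, namely verifying that the $1$-parameter family $\{u+\alpha \unit\}_{\alpha\in \R}$ actually saturates the dimension bound. Without this observation one would only conclude that $\pi_C(\E(T))$ is at most $1$-dimensional, which by itself would not yet distinguish between the outcome "unique eigenvector" and the intended outcome "unique eigenvector up to an additive constant"; once the saturation is noted, the dimension must be exactly $1$ and the corollary follows.
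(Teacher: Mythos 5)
Your proposal is correct and follows exactly the route the paper intends (the paper gives no separate proof, simply noting that the corollary is obtained by combining Lemma~\ref{lem:uniqueness-critical-class} with Theorem~\ref{thm:convex-spectral-theorem}); your filling-in of the final step --- that the line $\{\pi_C(u)+\alpha\,\pi_C(\unit)\}_{\alpha\in\R}$ saturates the dimension bound, so the convex set $\pi_C(\E(T))$ coincides with that line and injectivity of $\pi_C$ finishes the argument --- is the right way to make the deduction explicit.
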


We refer the reader to~\cite{AG03} for more background on critical classes, which admit
several characterizations and can be computed in polynomial time when the game is finite.
We only provide here a simple illustration in order to understand the latter theorem.

\begin{example}
  Let $T: \R^2\to\R^2$ be such that 
  \[
    T_1(x) = \max \Big\{ x_1, \frac{x_1+x_2}{2} \Big\} \enspace ,
    \quad T_2(x) = \max \Big\{ -3+x_2, \frac{x_1+x_2}{2} \Big\} \enspace .
  \]
  We have $T(0)=0$, which shows in particular that the upper mean payoff is $0$.
  If player \MAX chooses, when in state $1$,
  the action corresponding to the first term
  in the expression of $T_1$, and when in state $2$, the action
  corresponding to the second term
  in the expression of $T_2$, we arrive at the transition
  matrix 
  \[
    P= \begin{pmatrix} 1 & 0 \\ 1/2 & 1/2\end{pmatrix}
  \]
  which has the invariant probability measure $m=\transpose{(1,0)}$.
  This measure attains the maximum in~\eqref{eq:eigenvalue-convex}.
  However, its support $I=\{1\}$ is not a critical class for it is not
  maximal with respect to inclusion.
  Indeed, if player \MAX chooses instead, when in state $1$, the action corresponding
  to the second term in the expression of $T_1$, we arrive at the transition matrix
  \[
    P = \begin{pmatrix} 1/2 & 1/2 \\ 1/2 & 1/2\end{pmatrix}
  \]
  which has the invariant probability measure $m=\transpose{(1/2,1/2)}$.
  This measure attains the maximum in~\eqref{eq:eigenvalue-convex} and its support
  $I=\{1,2\}$ is maximal with respect to inclusion.
  Hence, $I=\{1,2\}$ is the unique critical class.
  It follows from  Theorem~\ref{thm:convex-spectral-theorem} that $0$ is the unique
  eigenvector of $T$, up to an additive constant.
\end{example}

Another ingredient is a variant of a result of Bruck~\cite{Bru73},
concerning the topology of fixed-point sets of nonexpansive maps.
We now consider a {\em two-player} Shapley operator $T$ such that the ergodic
equation~\eqref{eq:ergodic-equation} is solvable, and denote by
\[
  \E(T):= \{u \in \R^n \mid T(u)= \lambda e + u\}
\]
the set of eigenvectors of $T$ (recall that the eigenvalue $\lambda$ is unique). 
\begin{theorem}[{Compare with~\cite[Th.~2]{Bru73}}]
  \label{thm:nonexpansive-retract}
  Let $T: \R^n \to \R^n$ be a monotone and additively homogeneous map.
  Assume that the ergodic equation~\eqref{eq:ergodic-equation} 
  is solvable.
  Then, the set of eigenvectors $\E(T)$ is a retract of $\R^n$ by a 
  sup-norm nonexpansive map, meaning that $\E(T)=p(\R^n)$ where $p$
  is a sup-norm nonexpansive self-map of $\R^n$ such that $p=p^2$.
  In particular, $\E(T)$ is arcwise connected.
\end{theorem}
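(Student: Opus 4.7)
The plan is first to absorb the eigenvalue into the operator: set $\tilde T(x) := T(x) - \lambda \unit$. This map is monotone, additively homogeneous, and sup-norm nonexpansive, and its fixed point set is exactly $\E(T)$. It is therefore enough to build a sup-norm nonexpansive idempotent self-map $p$ of $\R^n$ whose image is $\mathrm{Fix}(\tilde T)$. Pick any reference $u_0 \in \mathrm{Fix}(\tilde T)$; by sup-norm nonexpansiveness one has $\|\tilde T^k(x) - u_0\|_\infty \leq \|x - u_0\|_\infty$ for every $x \in \R^n$ and every $k \geq 0$, so each orbit $(\tilde T^k(x))_{k \geq 0}$ stays in a bounded region.

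\textbf{Candidate retraction via an invariant mean.} In the spirit of Bruck, I would construct $p$ by averaging orbits with an invariant mean. Fix a Banach limit $B$ on $\ell^\infty(\N)$ and set, coordinatewise,
\[ p(x)_i := B\!\left(\bigl(\tilde T^k(x)_i\bigr)_{k \geq 0}\right), \qquad i \in \{1, \dots, n\}. \]
This is well defined by boundedness. The inequality $|p(x)_i - p(y)_i| \leq \sup_k |\tilde T^k(x)_i - \tilde T^k(y)_i| \leq \|x - y\|_\infty$ shows that $p$ is sup-norm nonexpansive, and $p$ fixes $\mathrm{Fix}(\tilde T)$ pointwise, because the orbit of a fixed point is constant. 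Once it has been established in addition that $p(x) \in \E(T)$ for every $x$, idempotence $p = p \circ p$ is automatic, and arcwise connectedness of $\E(T)$ follows from continuity of $p$: for $u_0, u_1 \in \E(T)$, the path $t \mapsto p\bigl((1-t) u_0 + t u_1\bigr)$ lies entirely in $\E(T)$ and connects them.

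\textbf{Main obstacle: commuting $\tilde T$ with the invariant mean.} The hard step is to show that $p(x)$ actually lies in $\mathrm{Fix}(\tilde T)$. Shift invariance of $B$ gives $B\bigl((\tilde T(\tilde T^k(x)))_i\bigr) = B\bigl((\tilde T^{k+1}(x))_i\bigr) = p(x)_i$, but what is required is $\tilde T_i(p(x)) = p(x)_i$, i.e., that the coordinate map $\tilde T_i$ commutes with $B$. This is immediate for linear $\tilde T$; for a general topical map it must be forced from the min--max structure. One natural route is to replace $\tilde T$ by its Krasnoselskii average $\tilde T_{1/2} := (I + \tilde T)/2$, which has the same fixed points and retains monotonicity, additive homogeneity and sup-norm nonexpansiveness, in the hope of obtaining better asymptotic regularity along the orbit; a second is to select, for each $x$, a cluster point of $(\tilde T^k(x))_k$ by extracting a subsequence in the finite-dimensional bounded set, and then to use monotonicity together with additive homogeneity to rule out the asymptotic oscillation modulo $\R \unit$ that would prevent the cluster point from being fixed. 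Either way, this commutation step is where the bulk of the technical effort will be concentrated, and the remainder of the proof then assembles quickly from the ingredients above.
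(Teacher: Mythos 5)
There is a genuine gap, and you have located it yourself: the claim that $p(x)\in\mathrm{Fix}(\tilde T)$ is never established, and with the Banach-limit construction it is in fact false in general. For a piecewise affine sup-norm nonexpansive map, orbits are typically only \emph{eventually periodic} with period larger than $1$, and an invariant mean applied to a periodic orbit returns the average over the period. For a nonlinear $\tilde T$ there is no reason why this average should be a fixed point; shift-invariance of $B$ only gives $B\bigl((\tilde T^{k+1}(x))_i\bigr)=p(x)_i$, and the missing commutation $\tilde T_i\bigl(B(\cdot)\bigr)=B\bigl(\tilde T_i(\cdot)\bigr)$ requires affineness of $\tilde T_i$, which is exactly what you do not have. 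Neither of the two escape routes you sketch closes this: Krasnoselskii averaging does give asymptotic regularity ($\|\tilde T_{1/2}^{k+1}(x)-\tilde T_{1/2}^k(x)\|_\infty\to 0$) and hence convergence of orbits to fixed points in finite dimension, but you would then have to prove that the limit map $x\mapsto\lim_k \tilde T_{1/2}^k(x)$ is well defined and nonexpansive, which you do not do; and an arbitrary cluster point of $(\tilde T^k(x))_k$ is generally a periodic point, not a fixed point. As written, the ``bulk of the technical effort'' you defer is the entire content of the theorem.

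The paper closes this in two ways, both worth knowing. The main proof simply observes that $\E(T)$ is the fixed-point set of the sup-norm nonexpansive map $x\mapsto -\lambda\unit+T(x)$ and invokes Bruck's theorem \cite[Th.~2]{Bru73} verbatim. The elementary alternative (given in the remark following the theorem) exploits \emph{monotonicity}, which your averaging argument never uses: assuming $\lambda=0$, set
\[
q(x):=\lim_{k\to\infty}\ \inf_{\ell\geq k}\tilde T^\ell(x)\enspace,
\]
the coordinatewise liminf of the orbit tail, which is finite by boundedness of orbits and is nonexpansive in $x$. Monotonicity and continuity of $\tilde T$ give $\tilde T(q(x))\leq q(x)$, so the sequence $\bigl(\tilde T^k(q(x))\bigr)_k$ is nonincreasing and bounded; its limit $p(x)$ is therefore a genuine fixed point, and $p$ inherits nonexpansiveness, monotonicity, additive homogeneity and idempotence. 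Replacing your invariant mean by this liminf-then-iterate construction repairs the proof; the rest of your argument (reduction to $\tilde T$, nonexpansiveness of $p$, and arcwise connectedness via $t\mapsto p((1-t)u_0+tu_1)$) is fine.
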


\begin{proof}
  The result of Bruck~\cite[Th.~2]{Bru73} shows that, under some compactness conditions,
  the fixed-point set of a nonexpansive self-map of a Banach space is a retract
  of the whole space by a nonexpansive map. 

  Assume now that $T$ is monotone, additively homogeneous, and admits an eigenvector
  for the eigenvalue $\lambda$.
  Then, the eigenspace $\E(T)$ coincides with the fixed-point set
  of the map $x\mapsto -\lambda \unit  +T(x)$.
  The latter map is sup-norm nonexpansive and satisfies the condition of~\cite[Th.~2]{Bru73},
  and so, $\E(T)$ is a nonexpansive retract of $\R^n$. 
\end{proof}

\begin{remark}
  The retraction $p$ in Theorem~\ref{thm:nonexpansive-retract}
  can be chosen to be monotone and additively homogeneous.
  This can actually be shown by elementary means, following a construction
  in the proof of~\cite[Lem.~3]{GG04}.
  Indeed, we may assume without loss of generality that $\lambda=0$, and consider
  $q(x):= \lim_{k \to \infty} \inf_{\ell \geq k} T^\ell(x)$, which is finite
  because every orbit of a nonexpansive map that admits a fixed point must be bounded.
  Since $T$ is monotone and continuous, we get $T(q(x)) \leq q(x)$, and so,
  $p(x):= \lim_{k\to \infty} T^k(q(x))$, which is the limit of a nonincreasing and
  bounded sequence, exists and is finite.
  The map   $p$ is easily shown to be monotone and additively homogeneous and to satisfy $p=p^2$. 
\end{remark}

\subsection{Proof of Theorem~\ref{thm:generic-uniqueness}}
\label{sec:proof-main-thm}

Let $T$ be the Shapley operator of a finite stochastic game with perfect information $\Gamma$ which is assumed to be ergodic.
Let $\sigma \in \polMIN$ be a policy of player \MIN.
We define the real map $\lambda^\sigma(\cdot)$ on $\R^n$ by
\begin{equation}
  \label{eq:eigenvalue-one-player}
  \lambda^\sigma(g) := \max \{ \< m, (g+r^{\sigma \tau}) > \mid
  \tau \in \polMAX, \; m \in \M^*(P^{\sigma \tau}) \} \enspace ,
\end{equation}
where $\M^*(P)$ denotes the set of extreme points of the convex polytope $\M(P)$,
that is, the set of invariant probability measures associated
with the final classes of the stochastic matrix $P$.
The fact that $\M^*(P^{\sigma \tau})$ is a set of probability measures yields that $\lambda^{\sigma}$ is monotone and additively homogeneous, hence sup-norm nonexpansive (and continuous).
Furthermore, since the set of policies $\polMAX$ of player \MAX is finite, as well as
all the sets $\M^*(P^{\sigma \tau})$, then the map $\lambda^\sigma$ is piecewise affine.

We now define the polyhedral complex $\C^\sigma$ covering $\R^n$, the full-dimensional cells of
which are precisely the maximal polyhedra on which the piecewise affine map
$\lambda^\sigma$ coincides with an affine map.

Let $Q$ be a cell of $\C^\sigma$ with full dimension.
We claim that if a vector $g$ is in the interior of $Q$, then the set of eigenvectors of the reduced one-player Shapley operator $F := g+T^\sigma$ is either empty or reduced to a line.
To see this, it suffices to observe that the measure $m$ attaining
the maximum in~\eqref{eq:eigenvalue-one-player}
is unique for all $g$ in the interior of $Q$ and independent of the choice of 
$g$ in this interior. Indeed, if $m$ is such a measure, 
then  $m\in\M^*(P^{\sigma \tau})$ for some $\tau \in \polMAX$
and for $d=\< m, r^{\sigma \tau} > \in\R$, we have 
$\lambda^\sigma(g')\geq\<m,g'> + d$,
for all $g'\in \R^n$, with equality at $g$.
Since $g'\mapsto \lambda^\sigma(g')$ is an affine map on $Q$,
and $g$ is in the interior of $Q$, the equality  
$\lambda^\sigma(g')=\<m,g'> + d$ holds for all $g'\in Q$ and so
$m$ must coincide with the linear part of the affine map,
which is independent of $g$ (and unique).
We deduce from Corollary~\ref{coro:uniqueness-eigenvector} that $\E(F)$, if it is nonempty,
is reduced to a line of direction $\unit$.

Consider now the polyhedral complex $\C$ obtained as the refinement of all the complexes $\C^\sigma$.
This complex still covers $\R^n$ and has cells with nonempty interior.
Let $g$ be a perturbation vector in the interior of a full-dimensional cell of $\C$.
Since the game $\Gamma$ is ergodic, $\E(g+T)$ is not empty.
Let $u$ be an eigenvector of $g+T$.
According to~\eqref{eq:T-min}, there is a policy $\sigma \in \polMIN$ of player \MIN such that $g+T(u) = g+T^\sigma(u)$.
Hence $u$ is also an eigenvector of $g+T^\sigma$.
So, there is a finite family $\Sigma^*$ of $\polMIN$ such that $\E(g+T) = \bigcup_{\sigma \in \Sigma^*} \E(g+T^\sigma)$.
Moreover, we have proved that for any policy $\sigma \in \Sigma^*$, the eigenspace $\E(g+T^\sigma)$ is reduced to a line.
Thus, $\E(g+T)$ is composed of a finite union of lines which all have the
same direction, $\unit$. Consider now the hyperplane orthogonal to 
the unit vector, $H:=\{x\in \R^n\mid \<x,\unit> =0\}$, and let $\pi$ denote the orthogonal projection on $H$.
Then, $\pi(\E(g+T))=\E(g+T)\cap H$ is finite.
However, by Theorem~\ref{thm:nonexpansive-retract}, 
$\E(g+T)$ is connected. Then, the set $\pi(\E(g+T))$ is also connected,
and since it is finite, it must be reduced to a point. 
It follows that $g+T$ has 
a unique eigenvector, up to an additive constant. 
\qed

\subsection{Example}

We conclude this section by an example illustrating Theorem~\ref{thm:generic-uniqueness}.
Consider the following Shapley operator defined on $\R^3$ (here we use $\wedge$ and $\vee$ instead of $\min$ and $\max$, respectively, and we recall that the addition has precedence over them):
\begin{equation*}
  T(x) =
  \begin{pmatrix}
    \frac{1}{2} (x_1 + x_3) \, \wedge \, 1 + \frac{1}{2} (x_1 + x_2)\\
    2 + \frac{1}{2} (x_1 + x_3) \, \wedge \, \left(1 + \frac{1}{2} (x_1 + x_2) \, \vee \, -2 + x_3 \right)\\
    3 + \frac{1}{2} (x_1 + x_3) \, \vee \, 1 + x_3
  \end{pmatrix} \enspace .
\end{equation*}
It can be proved, using Theorem~\ref{thm:ergodicity-condition}, that the ergodic
equation~\eqref{eq:ergodic-equation} is solvable for every perturbation vector $g \in \R^3$.
Figure~\ref{fig} shows the intersection of the hyperplane $\{g \in \R^3 \mid g_3 = 0\}$ and
the polyhedral complex introduced in Theorem~\ref{thm:generic-uniqueness}.
Here, for each vector $g$ in the interior of a full-dimensional polyhedron, $g+T$ has a unique eigenvector up to an additive constant.

\begin{figure}[!h]
  \begin{center}
    \begin{tikzpicture}[scale=0.15]
      \draw [->, >= angle 60,black] (-5,0) -- (20,0);0
      \draw (19,0) node[above] {$g_1$};
      \foreach \x in {0,5,10,15}
      \draw [black] (\x,0) -- (\x,-0.5);
      \draw (10,0) node[below] {\small $10$};
      \draw [->, >= angle 60,black] (0,-20) -- (0,7);
      \draw (0,6) node[right] {$g_2$};
      \foreach \y in {-15,-10,-5,0,5}
      \draw [black] (-0.5,\y) -- (0,\y);
      \draw (0,-10) node[left] {\small $-10$};
      \draw (15,6) node {$g_3=0$};
      \draw [thick,blue] (5,-20) -- (5,7);
      \draw [thick,blue] (-2,7) -- (23/2,-20);
      \draw [thick,blue] (11,-11) -- (-5,5);
      \draw [thick,blue] (11,-11) -- (20,-42/3);
      \draw [thick,blue] (11,-11) -- (16,-20);
      \draw (0,0) node {$\bullet$};
      \draw (0,0) node [below left] {\small $0$};
    \end{tikzpicture}
  \end{center}
  \caption{}
  \label{fig}
\end{figure}

Let us detail what happens in the neighborhood of $g=0$, point in which $g+T$ fails to have a unique eigenvector.
Note that in the neighborhood of $g=0$, the eigenvalue of $g+T$ remains $1$.
\begin{itemize}
  \item If $g_1+g_2 = 0$, the eigenvectors of $g+T$ are defined by
    \[ x_1=x_2+2 g_1 \enspace , \quad -3+g_2 \leq x_2-x_3 \leq -2-g_1 \enspace . \]
  \item If $g_1+g_2 > 0$, the unique eigenvector, up to an additive constant, is
    \[ (-2+2 g_1,-2+2 g_1+2 g_2,0) \enspace . \]
  \item If $g_1+g_2 < 0$, the unique eigenvector, up to an additive constant, is
    \[ (-3+2 g_1+g_2,-3+g_2,0) \enspace . \]
\end{itemize}

\section{Application to policy iteration}
\label{sec:applications}

We finally apply our results to show that policy iteration combined
with a perturbation scheme can solve degenerate stochastic games.

\subsection{Hoffman-Karp policy iteration}

Let us recall the notation of Sections~\ref{sec:preliminaries}
and~\ref{sec:generic-uniqueness}.
We denote by $\Gamma$ a finite stochastic game with perfect information.
The (finite) set of policies of player \MIN
is denoted by $\polMIN$, i.e., an element
of $\polMIN$ is a map $\sigma: \state \to \bigcup_{i \in \state} A_i$ such that
$\sigma(i) \in A_i$ for every state $i \in \state$.
The (finite) set of policies of player \MAX
is denoted by $\polMAX$, i.e., an element of $\polMAX$ is a map 
$\tau: K_A \to \bigcup_{i \in \state, a \in A_i} B_{i,a}$
such that $\tau(i,a) \in B_{i,a}$ for every state $i \in \state$ and action $a \in A_i$.
Finally, recall that for $\sigma \in \polMIN$ and $\tau \in \polMAX$,
$P^{\sigma \tau}$ denotes the $n \times n$ stochastic matrix whose $i$th row is given by
$P^{\sigma \tau}_i = P^{\sigma(i) \tau(i,\sigma(i))}_i$.

When $T:\R^n \to \R^n$ is the Shapley operator~\eqref{eq:Shapley-operator} of
a finite stochastic game with perfect information, 
Hoffman and Karp~\cite{HK66} have introduced
a policy iteration algorithm,
which takes the description of the game as the input
and returns the eigenvalue $\lambda$ and
an eigenvector $u$ of $T$, i.e., a solution $(\lambda,u) \in \R \times \R^n$ of
the ergodic equation $T(u) = \lambda \unit + u$.
Also, optimal stationary strategies for both players in the mean-payoff game can be 
derived from the output of the algorithm.

This algorithm and more generally policy iteration procedures are a standard general way
for solving mean-payoff stochastic games.
In worst-case scenario, they require an exponential number of iterations,
see~\cite{friedmann}.
However, no polynomial-time algorithm is known to solve mean-payoff games.
In fact, it is an important open question to know whether there exists one, since
this problem, or any problem involving a polynomial-time equivalent perfect-information
two-player zero-sum stochastic game (such as discounted stochastic games,
simple stochastic games, parity games, see~\cite{AM09}) is one of the few problems that
belong to the complexity class NP~$\cap$~coNP, see~\cite{Con92}.

It is convenient here to state an abstract, slightly more general, version
of the Hoffman-Karp algorithm, described in terms of the
operators $T$ and $T^\sigma$ (Algorithm~\ref{algo:main}). 

\begin{algorithm}
  \SetKwInOut{Input}{input}\SetKwInOut{Output}{output}
  \DontPrintSemicolon
  \Input{Shapley operator $T$ of perfect-information finite stochastic game.
  }
  \Output{eigenvalue $\lambda$ and eigenvector $u$ of $T$.}
  \BlankLine
  \KwSty{initialization}: select an arbitrary policy $\sigma_0 \in \polMIN$\;
  \Repeat{$\sigma_{\ki+1} = \sigma_{\ki}$}{
    compute an eigenpair $(\lambda^\ki,v^\ki)$ of $T^{\sigma_\ki}$\; \label{it:value-search}
    improve the policy $\sigma_\ki$ in a conservative way:
    select a policy $\sigma_{\ki+1} \in \polMIN$ such that
    $T^{\sigma_{\ki+1}}(v^\ki) = T(v^\ki)$ with, for every state $i \in \state$
    such that $T_i^{\sigma_\ki}(v^\ki) = T_i(v^\ki)$,
    $\sigma_{\ki+1}(i) = \sigma_\ki(i)$\;
  }
  \KwSty{return} $\lambda^\ki$ and $v^\ki$\;
  \caption{Policy iteration, compare with~\cite{HK66}}
  \label{algo:main}
\end{algorithm}

We assume Algorithm~\ref{algo:main} is interpreted in exact
arithmetics (the vectors $v^\ki$ have rational coordinates
and the $\lambda^\ki$ are rational numbers). To implement
Step~\ref{it:value-search}, we may call any oracle able
to compute the eigenvalue and an eigenvector of a one-player stochastic game. 
In the original approach of Hoffman and Karp, the oracle
consists in applying the same policy iteration algorithm
for the one-player game with fixed policy $\sigma_\ki$.
The proof of Hoffman and Karp shows that Algorithm~\ref{algo:main}
is valid under a restrictive assumption.

\begin{theorem}[{Corollary of~\cite{HK66}}]\label{th-HK}
  Algorithm~\ref{algo:main} terminates and is correct if for all choices
  of policies $\sigma$ and $\tau$ of the two players, the corresponding transition
  matrix $P^{\sigma\tau}$ is irreducible. 
\end{theorem}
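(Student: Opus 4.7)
The plan is to establish three key points: (a) well-definedness of each iteration, (b) a monotonicity dichotomy for the eigenvalue sequence $(\lambda^\ki)_\ki$, and (c) correctness at termination.

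First, I would observe that under the irreducibility assumption, every reduced one-player operator $T^\sigma$ corresponds to a unichain Markov decision process: every $\tau \in \polMAX$ yields an irreducible stochastic matrix $P^{\sigma\tau}$. Classical MDP theory (or Theorem~\ref{thm:ergodicity-condition} applied to $T^\sigma$, whose recession operator has only trivial fixed points under irreducibility) ensures that $T^\sigma$ admits a constant eigenvalue $\lambda^\sigma$ and an eigenvector unique up to an additive constant, so the call at line~\ref{it:value-search} is well-defined.

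Next I would establish monotonicity $\lambda^{\ki+1} \leq \lambda^\ki$. The policy improvement step gives
\[
T^{\sigma_{\ki+1}}(v^\ki) \;=\; T(v^\ki) \;\leq\; T^{\sigma_\ki}(v^\ki) \;=\; \lambda^\ki \unit + v^\ki .
\]
Iterating the monotone, additively homogeneous map $T^{\sigma_{\ki+1}}$ yields $(T^{\sigma_{\ki+1}})^m(v^\ki) \leq m\lambda^\ki \unit + v^\ki$; dividing by $m$ and passing to the limit gives $\lambda^{\ki+1} \unit = \chi(T^{\sigma_{\ki+1}}) \leq \lambda^\ki \unit$.

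The hard part, and the only place where irreducibility is essentially used beyond well-definedness, is to show that $\lambda^{\ki+1} = \lambda^\ki$ forces $\sigma_{\ki+1} = \sigma_\ki$. Assuming equality, I would invoke Lemma~\ref{lem:chibar-characterization} applied to the one-player operator $T^{\sigma_{\ki+1}}$ to pick $\tau^* \in \polMAX$ and $m^* \in \M^*(P^{\sigma_{\ki+1}\tau^*})$ with $\langle m^*, r^{\sigma_{\ki+1}\tau^*}\rangle = \lambda^{\ki+1}$. Chaining
\[
r^{\sigma_{\ki+1}\tau^*} + P^{\sigma_{\ki+1}\tau^*} v^\ki \;\leq\; T^{\sigma_{\ki+1}}(v^\ki) \;\leq\; \lambda^{\ki+1}\unit + v^\ki ,
\]
then taking the scalar product with $m^*$ and using $\transpose{m^*} P^{\sigma_{\ki+1}\tau^*} = \transpose{m^*}$, all three quantities integrate to the same value $\lambda^{\ki+1} + \langle m^*, v^\ki\rangle$. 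Since $P^{\sigma_{\ki+1}\tau^*}$ is irreducible, $m^* > 0$ entrywise, so the two inequalities are equalities coordinate-wise. In particular, $T(v^\ki) = T^{\sigma_{\ki+1}}(v^\ki) = \lambda^\ki\unit + v^\ki = T^{\sigma_\ki}(v^\ki)$ at every state, whence the conservative selection rule forces $\sigma_{\ki+1}(i) = \sigma_\ki(i)$ for every $i$.

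Termination and correctness follow at once. By contraposition, as long as the loop does not stop, $\lambda^{\ki+1} < \lambda^\ki$, so the eigenvalues form a strictly decreasing sequence within the finite set $\{\lambda^\sigma : \sigma \in \polMIN\}$; the loop therefore terminates in at most $|\polMIN|$ iterations. At termination, the stopping condition combined with the policy improvement identity yields $T(v^\ki) = T^{\sigma_\ki}(v^\ki) = \lambda^\ki \unit + v^\ki$, so $(\lambda^\ki, v^\ki)$ is an eigenpair of $T$, proving correctness.
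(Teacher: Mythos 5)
Your proof is correct and follows essentially the argument the paper sketches (and attributes to Hoffman and Karp) right after the theorem statement: the eigenvalue sequence is nonincreasing, irreducibility forces strict decrease unless the stopping condition is met, and finiteness of $\polMIN$ gives termination. Your filling-in of the strictness step — integrating the inequality $T^{\sigma_{\ki+1}}(v^\ki)\leq \lambda^\ki \unit + v^\ki$ against the everywhere-positive invariant measure of $P^{\sigma_{\ki+1}\tau^*}$ and then invoking the conservative selection rule — is exactly the standard mechanism behind the paper's one-line claim, so no further comment is needed.
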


Indeed, it is easy to see that
the sequence $(\lambda^\ki)_k$ of Algorithm~\ref{algo:main} is
nonincreasing, that is, $\lambda^{\ki+1} \leq \lambda^\ki$ for all iterations $\ki$.
The irreducibility assumption was shown to imply that the latter inequalities are always strict,
which entails the finite time convergence (each policy
yields a unique well defined eigenvalue, these eigenvalues
constitute a decreasing sequence, and there are finitely
many policies).

However, the assumption that {\em all} 
the stochastic matrices $P^{\sigma \tau}$ be irreducible
is way too strong to guarantee that Algorithm~\ref{algo:main}
is properly posed. Indeed, to execute the algorithm,
it suffices that at every iteration $\ki$
the operator $T^{\sigma_\ki}$ admits an eigenvalue and an eigenvector,
which is the case in particular if for all policies
$\sigma$, the graph obtained by taking the {\em union} of the edge sets
of all the graphs associated to $P^{\sigma \tau}$ for the different choices of $\tau$ is strongly connected
(the existence of the eigenvalue and eigenvector,
in this generality, goes back to Bather~\cite{bather}, see also
\cite{GG04,AGH15} for a more general discussion). 
In particular, the irreducibility assumption
of Hoffman and Karp is essentially never satisfied for deterministic
games, whereas the condition involving the union of the edge
sets is satisfied by relevant classes of deterministic games.

It should be noted that Algorithm~\ref{algo:main} may, in general,
lead to degenerate iterations, in which $\lambda^{\ki+1}=\lambda^\ki$.
As shown by an example in~\cite[Sec.~6]{ACTDG}, this may lead the algorithm to cycle
when the bias vector is not unique.
This difficulty was solved first in the deterministic framework in~\cite{CTGG99},
where it was shown that cycling can be avoided by enforcing a special choice of the bias vector,
obtained by a nonlinear projection operation.
This approach was then extended to the stochastic framework in~\cite{CTG06,ACTDG}. 
As a special case of these results, we get that policy iteration 
is correct and does terminate under much milder conditions than in Theorem~\ref{th-HK}.

\begin{theorem}[Corollary of~{\cite[Th.~7]{CTG06}}]
  \label{thm:PI-termination}
  Algorithm~\ref{algo:main} terminates and is correct if for each choice
  of policy $\sigma$ of player \MIN, the operator $T^{\sigma}$ has an eigenvalue and a {\em unique} eigenvector, up to an additive constant.
\end{theorem}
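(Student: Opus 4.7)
The plan is to reduce this statement to the main termination result of \cite[Th.~7]{CTG06}. That reference establishes termination and correctness for a refined version of Algorithm~\ref{algo:main} in which Step~\ref{it:value-search} is strengthened by a nonlinear projection: instead of selecting an arbitrary eigenvector $v^\ki$ of $T^{\sigma_\ki}$, one selects the canonical eigenvector obtained by projecting (a suitable shift of) the previous iterate onto the eigenspace of $T^{\sigma_\ki}$. This canonical choice is introduced specifically to prevent cycling when the eigenspace of some $T^\sigma$ has dimension strictly greater than one.

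Under the hypothesis of the theorem, the eigenspace of each reduced operator $T^\sigma$ is a single line of direction $\unit$, so the projection becomes trivial: it returns the unique bias vector (up to an additive constant), which is exactly what Step~\ref{it:value-search} of Algorithm~\ref{algo:main} computes. Moreover, the conservative policy improvement rule is invariant under adding a constant to $v^\ki$, because $T$ and $T^{\sigma_\ki}$ are additively homogeneous and the equalities $T^{\sigma_{\ki+1}}(v^\ki)=T(v^\ki)$ and $T^{\sigma_\ki}_i(v^\ki)=T_i(v^\ki)$ are preserved under such shifts. Hence the sequence of policies produced by Algorithm~\ref{algo:main} coincides with the one produced by the algorithm of \cite{CTG06}, regardless of how the free additive constant in $v^\ki$ is fixed. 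By \cite[Th.~7]{CTG06}, this common sequence stabilizes after finitely many iterations, so Algorithm~\ref{algo:main} terminates. Correctness will then follow from the stopping criterion: when $\sigma_{\ki+1}=\sigma_\ki$, the conservative rule yields $T(v^\ki)=T^{\sigma_{\ki+1}}(v^\ki)=T^{\sigma_\ki}(v^\ki)=\lambda^\ki\unit+v^\ki$, so $(\lambda^\ki,v^\ki)$ is an eigenpair of $T$.

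As a self-contained alternative, I would first apply the Collatz-Wielandt identity (as in the proof of Lemma~\ref{lem:chibar-characterization}) to the inequality $T^{\sigma_{\ki+1}}(v^\ki)=T(v^\ki)\leq T^{\sigma_\ki}(v^\ki) = \lambda^\ki \unit + v^\ki$ to conclude that $\lambda^{\ki+1}\leq\lambda^\ki$. Then within a phase of constant $\lambda^\ki = \lambda$, I would iterate $G:=T^{\sigma_{\ki+1}} - \lambda \unit$ starting from $v^\ki$: the resulting componentwise nonincreasing sequence is bounded below by any eigenvector of $T^{\sigma_{\ki+1}}$ and converges, by the uniqueness assumption, to a fixed point of $G$, which we may choose as the representative $v^{\ki+1}$. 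This gives $v^{\ki+1}\leq v^\ki$ componentwise, and when $\sigma_{\ki+1}\neq\sigma_\ki$, the strict inequality $T^{\sigma_{\ki+1}}_i(v^\ki) < T^{\sigma_\ki}_i(v^\ki)$ at some state $i$ forbids $v^{\ki+1}$ from differing from $v^\ki$ by a mere additive constant (otherwise additive homogeneity would force equality at state $i$).

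The main obstacle, which is precisely the content of \cite[Th.~7]{CTG06}, is then to rule out cycling of the deterministic sequence $(\sigma_\ki)$ within a phase of constant $\lambda$. The uniqueness assumption ensures that, after fixing a normalization of the representatives, the bias $v^\ki$ is determined by $\sigma_\ki$ alone, so that finiteness of $\polMIN$ combined with the monotonicity and strict-progress properties above yields termination. This combinatorial step is the delicate part of the proof and is where the uniqueness hypothesis is used most crucially.
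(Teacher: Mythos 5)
Your main argument --- reducing to \cite[Th.~7]{CTG06} by observing that the nonlinear-projection choice of bias used there collapses to the unique (up to an additive constant) bias under the present hypothesis, and that the conservative improvement rule and the stopping test are insensitive to that additive constant --- is exactly how the paper obtains this statement, which it presents as a corollary of that reference without further proof. Your ``self-contained alternative'' correctly gets the monotonicity $\lambda^{\ki+1}\leq\lambda^\ki$ and the strict componentwise decrease of the bias when the policy changes, but, as you yourself acknowledge, it stops short of excluding a cycle of policies along which the bias drifts downward by a negative additive constant within a phase of constant $\lambda^\ki$; that exclusion is precisely the content of the cited theorem, so the citation remains essential.
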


We next show that the conditions of Theorem~\ref{thm:PI-termination} are satisfied
for generic payments,
and conclude that nongeneric instances can still be solved
by the Hoffman-Karp algorithm, after an effective perturbation of the input. 

\subsection{Generic termination of policy iteration}

Let $T:\R^n \to \R^n$ be the Shapley operator of
a finite stochastic game with perfect information.
The following assumption guarantees that Algorithm~\ref{algo:main} is well posed
for any additive perturbation of $T$.
\begin{assumption}
  \label{asm:ergodicity}
  For any policy $\sigma \in \polMIN$ of player \MIN, the one-player game
  with Shapley operator $T^{\sigma}$ is ergodic
  in the sense of Definition~\ref{def-ergodicity},
  meaning that for all perturbation
  vectors $g \in \R^n$, the operator $g+T^{\sigma}$ has an eigenvalue (and an eigenvector).
\end{assumption}
This assumption is much milder than the original assumption
of Hoffman and Karp, requiring all the transition matrices
$P^{\sigma \tau}$ to be irreducible (see Theorem~\ref{th-HK}).
Also, since $\widehat{T} = \min_{\sigma \in \polMIN} \widehat{T^\sigma}$, it readily follows
from Theorem~\ref{thm:ergodicity-condition} that Assumption~\ref{asm:ergodicity}
implies that the original two-player game is ergodic.
Moreover, we shall see in the next subsection that one can always transform
a game (in polynomial time) by a ``big $M$'' trick in such a way that
Assumption~\ref{asm:ergodicity} becomes satisfied.

By using the arguments of Section~\ref{sec:generic-uniqueness}, we now show that under
Assumption~\ref{asm:ergodicity}, Algorithm~\ref{algo:main}
terminates for a generic perturbation of the payments.

\begin{theorem}
  Let $T: \R^n \to \R^n$ be the Shapley operator of a finite stochastic game
  with perfect information satisfying Assumption~\ref{asm:ergodicity}.
  Then, the space $\R^n$ can be covered by a polyhedral complex such that
  for each additive perturbation vector $g \in \R^n$ in the interior of
  a full-dimensional cell, Algorithm~\ref{algo:main} terminates after a finite number of
  steps and gives an eigenpair of $g+T$.
  \label{thm:generic-termination}
\end{theorem}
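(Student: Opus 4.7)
The plan is to reduce to the one-player case by applying Theorem~\ref{thm:generic-uniqueness} separately to each reduced operator $T^\sigma$, and then to invoke the termination criterion of Theorem~\ref{thm:PI-termination}.

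First I would fix a policy $\sigma \in \polMIN$ and observe that the operator $T^\sigma$ defined in~\eqref{e-def-Tsigma} is itself the Shapley operator of a finite perfect-information stochastic game, namely the one-player game in which player \MIN is forced to play $\sigma(i)$ at every state $i$. By Assumption~\ref{asm:ergodicity}, this one-player game is ergodic in the sense of Definition~\ref{def-ergodicity}. Hence Theorem~\ref{thm:generic-uniqueness} applies to $T^\sigma$ and furnishes a polyhedral complex $\C^\sigma$ covering $\R^n$ such that whenever $g$ lies in the interior of a full-dimensional cell of $\C^\sigma$, the perturbed operator $g + T^\sigma$ admits an eigenvalue together with an eigenvector that is unique up to an additive constant.

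Next I would form the common refinement $\C$ of the finite family $\{\C^\sigma : \sigma \in \polMIN\}$. Standard facts on polyhedral complexes imply that $\C$ is again a polyhedral complex covering $\R^n$, and that the interior of any full-dimensional cell of $\C$ is contained in the interior of some full-dimensional cell of each $\C^\sigma$ (equivalently, the finite intersection of the dense open sets that the $\C^\sigma$-generic points form is itself dense open, and $\C$ realizes this intersection cell-wise). For any $g$ in the interior of a full-dimensional cell of $\C$, we therefore have, simultaneously for every $\sigma \in \polMIN$, that $g + T^\sigma$ possesses an eigenvalue and a unique eigenvector up to an additive constant. Theorem~\ref{thm:PI-termination} then applies directly to the input $g+T$ and yields termination of Algorithm~\ref{algo:main} in finitely many iterations, together with an eigenpair of $g+T$.

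The genuine mathematical content is packaged inside Theorem~\ref{thm:generic-uniqueness} and Theorem~\ref{thm:PI-termination}, so the remaining work is merely the bookkeeping of taking a common refinement. The only minor point to verify is that Assumption~\ref{asm:ergodicity} is precisely the hypothesis under which Theorem~\ref{thm:generic-uniqueness} can be applied to each $T^\sigma$ (this is immediate from Definition~\ref{def-ergodicity}), and that Algorithm~\ref{algo:main} is well posed at every iteration on the perturbed input (since under Assumption~\ref{asm:ergodicity} each $g + T^{\sigma_\ki}$ admits an eigenpair, so Step~\ref{it:value-search} can be executed). I do not anticipate any serious obstacle.
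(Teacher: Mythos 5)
Your proposal is correct and follows essentially the same route as the paper: the paper likewise takes $g$ in the interior of a full-dimensional cell of the common refinement $\C$ of the per-policy complexes $\C^\sigma$ (the very complex built in the proof of Theorem~\ref{thm:generic-uniqueness}), notes that each $g+T^{\sigma}$ then has an eigenvector unique up to an additive constant, and concludes by Theorem~\ref{thm:PI-termination}. The only cosmetic difference is that you invoke Theorem~\ref{thm:generic-uniqueness} as a black box for each one-player operator $T^\sigma$, whereas the paper directly reuses the intermediate uniqueness statement established inside its proof.
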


\begin{proof}
  Consider the same complex $\C$ as in Section~\ref{sec:generic-uniqueness} and
  let $g$ be a perturbation vector in the interior of a full-dimensional cell of $\C$.
  It follows from the proof of Theorem~\ref{thm:generic-uniqueness}
  (see Subsection~\ref{sec:proof-main-thm}) that for any policy $\sigma \in \polMIN$,
  the eigenvector of $g+T^{\sigma}$, which exists according to
  Assumption~\ref{asm:ergodicity}, is unique up to an additive constant.
  Hence, at each step $\ki$ of Algorithm~\ref{algo:main}, the bias vector $v^\ki$
  of $g+T^{\sigma_{\ki}}$ is unique up to an additive constant.
  The conclusion follows from Theorem~\ref{thm:PI-termination}.
\end{proof}

We next provide an explicit perturbation $g$, depending on a parameter $\varepsilon$,
for which the policy iteration algorithm applied to $g+T$ is valid.
We shall see in the next subsection that $\varepsilon$ can be instantiated with a polynomial
number of bits, in such a way that the original unperturbed problem is solved.

Before giving this explicit perturbation scheme, let us mention that the polyhedral complex
$\C$ introduced in the proof of Theorem~\ref{thm:generic-uniqueness}
(Subsection~\ref{sec:proof-main-thm}) can be constructed for any perfect-information
finite stochastic game, whether it is ergodic or not.
Recall indeed that $\C$ is obtained as a refinement of all the regions where
the maps $g \mapsto \lambda^\sigma(g)$ with $\sigma \in \polMIN$, defined
in~\eqref{eq:eigenvalue-one-player}, are affine.
In particular, we do not assume ergodicity in the next two propositions.

\begin{proposition}
  \label{prop:perturbation-scheme}
  Let $T: \R^n \to \R^n$ be the Shapley operator of a finite stochastic game
  with perfect information.
  Then, there exists $\varepsilon_0 > 0$ such that all perturbation vectors
  $g_\varepsilon := (\varepsilon, \varepsilon^2, \dots, \varepsilon^n)$
  with $0 < \varepsilon < \varepsilon_0$ are in the interior of
  the same full-dimensional cell of the polyhedral complex $\C$ introduced
  in Subsection~\ref{sec:proof-main-thm}.
\end{proposition}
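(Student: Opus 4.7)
The plan is to exploit the fact that the moment curve $\varepsilon \mapsto g_\varepsilon = (\varepsilon, \varepsilon^2, \dots, \varepsilon^n)$ is in generic position with respect to any finite family of affine hyperplanes of $\R^n$ (whether or not they pass through the origin). Since the polyhedral complex $\C$ is cut out by only finitely many such hyperplanes, the curve can meet each of them at only finitely many values of $\varepsilon$; hence for $\varepsilon > 0$ small enough it avoids every wall, and by connectedness of the interval it must stay in a single open cell.

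First, I would write out explicitly the hyperplanes defining $\C$. For each policy $\sigma \in \polMIN$, the map $\lambda^\sigma$ from~\eqref{eq:eigenvalue-one-player} is the pointwise maximum of the finite family of affine functions $g \mapsto \<m,g> + \<m,r^{\sigma\tau}>$ indexed by $\tau \in \polMAX$ and $m \in \M^*(P^{\sigma\tau})$. The full-dimensional cells of $\C^\sigma$ are the maximal regions on which $\lambda^\sigma$ agrees with one of these affine forms, and their relative boundaries are contained in the union of the hyperplanes
\[
\bigl\{ g \in \R^n : \<m-m',g> = \<m',r^{\sigma\tau'}> - \<m,r^{\sigma\tau}> \bigr\}
\]
taken over index pairs with $m \neq m'$. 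Collecting these hyperplanes as $\sigma$ varies yields a finite family $\mathcal{H}$, and the interiors of the full-dimensional cells of the refined complex $\C$ are precisely the connected components of $\R^n \setminus \bigcup \mathcal{H}$.

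Second, I would show that each hyperplane $H \in \mathcal{H}$ meets the moment curve only at isolated values of $\varepsilon$. Writing $H = \{g : \<a,g> = c\}$ with $a = m - m' \neq 0$, the condition $g_\varepsilon \in H$ reduces to the polynomial equation
\[
p(\varepsilon) := -c + \sum_{i=1}^n a_i \varepsilon^i = 0.
\]
Since $a \neq 0$, $p$ is a nonzero real polynomial of degree at most $n$, so it has at most $n$ real roots. In particular, the set of $\varepsilon > 0$ with $g_\varepsilon \in H$ is finite.

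Finally, since $\mathcal{H}$ is finite, the set of $\varepsilon > 0$ for which $g_\varepsilon$ lies on some hyperplane of $\mathcal{H}$ is itself finite. Taking $\varepsilon_0 > 0$ strictly smaller than every such value, the continuous map $\varepsilon \mapsto g_\varepsilon$ sends $(0,\varepsilon_0)$ into $\R^n \setminus \bigcup \mathcal{H}$; since the image is connected and the codomain is a disjoint union of open cells (the interiors of the full-dimensional cells of $\C$), the image must lie in a single such cell. The only point requiring a routine check is that the walls retained in $\mathcal{H}$ really have nonzero linear part, which holds because we restrict to $m \neq m'$. There is no real obstacle in the argument—it is a standard Vandermonde-type genericity of the moment curve—and the same reasoning would apply to any curve whose components are linearly independent real-analytic functions of $\varepsilon$ vanishing at $0$.
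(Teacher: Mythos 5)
Your proof is correct and follows essentially the same route as the paper: the non-full-dimensional cells lie in a finite hyperplane arrangement, the moment curve meets each hyperplane in at most $n$ values of $\varepsilon$ (roots of a nonzero polynomial of degree at most $n$), so for $\varepsilon$ below the smallest positive crossing the connected image of $(0,\varepsilon_0)$ stays in the interior of one full-dimensional cell. The extra detail you supply (identifying the walls as the loci $\<m-m',g>=d'-d$ and checking the linear part is nonzero) is a harmless elaboration of what the paper leaves implicit; only your claim that the components of the complement of the arrangement are ``precisely'' the cell interiors is slightly too strong (the arrangement may overrefine the complex), but the containment you actually need holds.
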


\begin{proof}
  The cells of the polyhedral complex $\C$ introduced in
  Subsection~\ref{sec:proof-main-thm}
  that are not full-dimensional are included in an arrangement of a finite number of hyperplanes.
  The real curve $\varepsilon \mapsto g_\varepsilon= (\varepsilon,\dots,\varepsilon^n)$
  cannot cross a given hyperplane in this arrangement more than $n$ times
  (otherwise, a polynomial of degree $n$ would have strictly more than $n$ roots).
  We deduce that there is a value $\varepsilon_0 > 0$ such that the restriction
  of the curve $\varepsilon \mapsto g_\varepsilon$ to the open interval $(0,\varepsilon_0)$
  crosses no hyperplane of the arrangement.
  Therefore, it must stay in the interior of a full-dimensional cell of the complex $\C$.
\end{proof}

The following result is a refinement of the previous one. 

\begin{proposition}
  \label{prop:optimal-policies}
  Let $T: \R^n \to \R^n$ be the Shapley operator of a finite stochastic game
  with perfect information.
  Then, there exist $\varepsilon_1 > 0$, policies $\sigma \in \polMIN$ and
  $\tau \in \polMAX$, and an invariant probability measure $m^{\sigma \tau}$ of
  the stochastic matrix $P^{\sigma \tau}$ such that for all $\varepsilon \in [0,\varepsilon_1]$,
  the upper mean payoff of $g_\varepsilon + T$ is given by
  $\chibar(g_\varepsilon + T) = \<m^{\sigma \tau}, (g_\varepsilon + r^{\sigma \tau})>$.
\end{proposition}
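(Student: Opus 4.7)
The plan is to restrict each piecewise affine function $\lambda^\sigma$ of~\eqref{eq:eigenvalue-one-player} to the algebraic curve $\varepsilon \mapsto g_\varepsilon$, so that $\chibar(g_\varepsilon + T)$ becomes the pointwise minimum of finitely many univariate polynomials, and then to observe that such a minimum is attained by a single polynomial on a right-neighborhood of $0$.

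First, by Lemma~\ref{lem:chibar-characterization}, in which the set of invariant probability measures $\M(P^{\sigma\tau})$ may be replaced by the set of its extreme points $\M^*(P^{\sigma\tau})$, one has
\[
\chibar(g+T) = \min_{\sigma \in \polMIN} \lambda^\sigma(g) \quad \text{for every } g \in \R^n.
\]
By Proposition~\ref{prop:perturbation-scheme}, there exists $\varepsilon_0 > 0$ such that $g_\varepsilon$ lies in the interior of a single full-dimensional cell $Q$ of the polyhedral complex $\C$ for every $\varepsilon \in (0, \varepsilon_0)$. Since $\C$ refines every $\C^\sigma$, the map $\lambda^\sigma$ is affine on the interior of $Q$, and the argument used in the proof of Theorem~\ref{thm:generic-uniqueness} in Subsection~\ref{sec:proof-main-thm} shows that one can pick a pair $(\tau_\sigma, m_\sigma) \in \polMAX \times \M^*(P^{\sigma\tau_\sigma})$ for which $\lambda^\sigma(g) = \<m_\sigma, g + r^{\sigma\tau_\sigma}>$ holds on the interior of $Q$. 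Evaluating along the curve gives
\[
\lambda^\sigma(g_\varepsilon) = \<m_\sigma, r^{\sigma\tau_\sigma}> + \sum_{i=1}^n (m_\sigma)_i\, \varepsilon^i,
\]
which is a polynomial of degree at most $n$ in $\varepsilon$.

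Next, I would exploit the standard fact that the pointwise minimum of a finite family of real polynomials is attained by a single polynomial on a right-neighborhood of any given point: for each pair $\sigma \neq \sigma'$, either $\lambda^\sigma(g_\varepsilon)$ and $\lambda^{\sigma'}(g_\varepsilon)$ coincide identically as polynomials in $\varepsilon$, or they agree at only finitely many values of $\varepsilon$. Hence there exist $\varepsilon_1 \in (0, \varepsilon_0]$ and $\sigma^* \in \polMIN$ such that $\lambda^{\sigma^*}(g_\varepsilon) \leq \lambda^{\sigma}(g_\varepsilon)$ for all $\sigma \in \polMIN$ and all $\varepsilon \in (0, \varepsilon_1]$. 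Setting $\sigma := \sigma^*$, $\tau := \tau_{\sigma^*}$ and $m^{\sigma\tau} := m_{\sigma^*}$ yields the claimed equality on $(0, \varepsilon_1]$. It extends by continuity to $\varepsilon = 0$: the right-hand side is polynomial in $\varepsilon$, and the left-hand side is continuous because $g \mapsto \chi(g+T)$ is sup-norm Lipschitz, which follows at once from the sup-norm nonexpansivity of the Shapley operator.

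I do not expect any serious obstacle, since the hard ingredients -- Lemma~\ref{lem:chibar-characterization}, the construction of the polyhedral complex $\C$ and Proposition~\ref{prop:perturbation-scheme} -- are already available. The only mildly delicate point is to produce a pair $(\tau_\sigma, m_\sigma)$ that is valid uniformly on the interior of $Q$, which relies on the observation, already present in Subsection~\ref{sec:proof-main-thm}, that the linear part of the affine map $\lambda^\sigma|_Q$ is uniquely determined and must coincide with the unique measure attaining the maximum in~\eqref{eq:eigenvalue-one-player} on the interior of $Q$.
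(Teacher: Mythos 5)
Your proof is correct and follows essentially the same route as the paper's: the only cosmetic difference is that the paper refines the complex $\C$ into a complex $\C'$ on whose full-dimensional cells $g\mapsto\chibar(g+T)$ is itself affine and then reruns the root-counting argument of Proposition~\ref{prop:perturbation-scheme}, whereas you restrict each $\lambda^\sigma$ to the curve $\varepsilon\mapsto g_\varepsilon$ first and take the minimum of the resulting univariate polynomials of degree at most $n$. Both versions reduce to the same fact that a nonzero polynomial of degree at most $n$ has at most $n$ roots, so there is nothing to fix.
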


\begin{proof}
  Recall that the upper mean payoff of $g+T$ is given by
  \begin{gather*}
    \label{e-min}
    \chibar(g+T) = \min_{\sigma \in \polMIN} \lambda^\sigma(g) \\
    \label{e-max}
    \text{with} \qquad
    \lambda^\sigma(g) = \max \{ \< m, (g+r^{\sigma \tau}) > \mid
    \tau \in \polMAX, \; m \in \M^*(P^{\sigma \tau}) \} \enspace .
  \end{gather*}
  By construction of the polyhedral complex $\C$ in
  Subsection~\ref{sec:proof-main-thm},
  in the interior of a full-dimensional cell, each piecewise affine map
  $g \mapsto \lambda^\sigma(g)$ coincides with a unique affine map, but
  $g \mapsto \chibar(g+T)$ need not be affine.
  Hence, we can refine the complex $\C$ into a complex $\C'$ such that the latter piecewise
  affine map also coincides with a unique affine map on each cell of full dimension.
  The same proof as Proposition~\ref{prop:perturbation-scheme} leads to the existence
  of a parameter $\varepsilon_1$ such that all the perturbations $g_\varepsilon$
  with $\varepsilon \in (0,\varepsilon_1)$ lie in the interior of the same full-dimensional
  cell of $\C'$.
  Let $Q$ be this cell.
  By construction of the latter complex, there exists a policy $\sigma \in \polMIN$
  such that $\chibar(g+T) = \lambda^\sigma(g)$ for all $g \in Q$, and for that policy $\sigma$
  there exists a policy $\tau \in \polMAX$ and $m^{\sigma \tau} \in \M^*(P^{\sigma \tau})$
  such that $\lambda^\sigma(g) = \<m^{\sigma \tau},(g+r^{\sigma \tau})>$ for all $g \in Q$.
\end{proof}

It follows that solving the game with Shapley operator $g_\varepsilon +T$
for $\varepsilon$ small enough entails a solution of the original game.

\begin{proposition}
  If $T$ satisfies Assumption~\ref{asm:ergodicity}, then there exists $\varepsilon_1 > 0$
  (same as in Proposition~\ref{prop:optimal-policies}) such that Algorithm~\ref{algo:main}
  terminates for any input $g_\varepsilon + T$ with $\varepsilon \in (0,\varepsilon_1)$.
  Furthermore, for any policy $\sigma$ satisfying $\lambda(g+T) = \lambda(g+T^\sigma)$,
  we also have $\lambda(T) = \lambda(T^\sigma)$.
\end{proposition}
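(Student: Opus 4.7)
The plan is to treat the two assertions separately. For the termination part, the argument reduces immediately to Theorem~\ref{thm:generic-termination}. First I would observe that the complex $\C'$ introduced in the proof of Proposition~\ref{prop:optimal-policies} is a refinement of the complex $\C$ of Theorem~\ref{thm:generic-uniqueness}, so the corresponding threshold parameter satisfies $\varepsilon_1 \leq \varepsilon_0$ (the polynomial curve $\varepsilon \mapsto g_\varepsilon$ can only start crossing hyperplanes earlier when the underlying arrangement is enlarged). Hence $g_\varepsilon$ still lies in the interior of a full-dimensional cell of $\C$ for every $\varepsilon \in (0,\varepsilon_1)$, and Assumption~\ref{asm:ergodicity} supplies exactly the hypothesis needed by Theorem~\ref{thm:generic-termination}, so Algorithm~\ref{algo:main} terminates on input $g_\varepsilon + T$.

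For the second assertion, the strategy is to promote the pointwise equality at $g_\varepsilon$ to an equality of affine functions on the whole cell of $\C'$ containing $g_\varepsilon$, and then to evaluate at $g = 0$. Fix $\varepsilon \in (0,\varepsilon_1)$ and a policy $\sigma \in \polMIN$ satisfying $\lambda(g_\varepsilon + T) = \lambda(g_\varepsilon + T^\sigma)$, and let $Q$ be the full-dimensional cell of $\C'$ whose interior contains $g_\varepsilon$. The first step is to verify that both $g \mapsto \chibar(g+T)$ and $g \mapsto \lambda^\sigma(g)$ are affine on $Q$: the former by construction of $\C'$ in the proof of Proposition~\ref{prop:optimal-policies}, and the latter because $Q$ is contained in a full-dimensional cell of $\C$, on which every $\lambda^\rho$ is affine. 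Assumption~\ref{asm:ergodicity} also forces $T$ itself to be ergodic (via $\widehat{T} = \min_\sigma \widehat{T^\sigma}$ and Theorem~\ref{thm:ergodicity-condition}), so $\lambda(g_\varepsilon + T) = \chibar(g_\varepsilon + T)$, and similarly $\lambda(g_\varepsilon + T^\sigma) = \chibar(g_\varepsilon + T^\sigma) = \lambda^\sigma(g_\varepsilon)$ by ergodicity of $T^\sigma$ and Lemma~\ref{lem:chibar-characterization}.

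The core step is then elementary: since $\chibar(g+T) = \min_{\rho \in \polMIN} \lambda^\rho(g) \leq \lambda^\sigma(g)$ at every $g \in \R^n$, the difference $\lambda^\sigma - \chibar(\cdot + T)$ is a nonnegative affine function on $Q$ that vanishes at the interior point $g_\varepsilon$; such a function must vanish identically on $Q$. By continuity, the equality $\lambda^\sigma(g) = \chibar(g+T)$ extends to the closure $\overline{Q}$, which contains the origin since $g_\varepsilon \to 0$ as $\varepsilon \to 0^+$. Evaluating at $g = 0$ yields $\lambda(T^\sigma) = \chibar(T^\sigma) = \lambda^\sigma(0) = \chibar(T) = \lambda(T)$, which is the desired identity. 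The main (and only) subtlety is this lifting step, which works precisely because a nonnegative affine function on a full-dimensional set cannot vanish at an interior point without being identically zero; beyond this, the argument just requires careful bookkeeping of the ergodicity hypotheses and the refinement relation between $\C$ and $\C'$.
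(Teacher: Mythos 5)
Your proof is correct and follows essentially the same route as the paper: termination via the refinement relation between $\C'$ and $\C$ together with Theorem~\ref{thm:generic-termination}, and the second assertion via the dichotomy that the two affine functions $g \mapsto \lambda(g+T)$ and $g \mapsto \lambda^\sigma(g)$ on the cell $Q$ either coincide throughout $Q$ or are strictly ordered on its interior, followed by evaluation at $g=0 \in \overline{Q}$. You merely spell out more explicitly the step (a nonnegative affine function vanishing at an interior point is identically zero) that the paper leaves implicit.
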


\begin{proof}
  First note that here, since the game is ergodic, we have $\chibar(g+T) = \lambda(g+T)$
  for all perturbations $g \in \R^n$.

  Following Proposition~\ref{prop:optimal-policies}, the parameter $\varepsilon_1$ is
  such that all perturbations $g_\varepsilon$ with $0 < \varepsilon < \varepsilon_1$
  lie in the interior of the same full-dimensional cell of the complex $\C'$
  (which is a refinement of $\C$, see the proof of Proposition~\ref{prop:optimal-policies}).
  Denote by $Q$ this cell.
  The termination of Algorithm~\ref{algo:main} is then a straightforward consequence of
  Theorem~\ref{thm:generic-termination}.

  By definition of the polyhedral complex $\C'$, the piecewise affine map
  $g \mapsto \lambda(g+T)$ is affine when restricted to $Q$.
  Furthermore, for any policy $\sigma \in \polMIN$, we have either
  $\lambda(g+T) = \lambda^\sigma(g) = \lambda(g+T^\sigma)$ for all $g$ in $Q$,
  or $\lambda(g+T) < \lambda^\sigma(g) = \lambda(g+T^\sigma)$ for all $g$ in
  the interior of $Q$.
  Hence the result.
\end{proof}

\subsection{Complexity issues}

In this subsection, we show that computing the upper mean payoff of a Shapley operator
(a fortiori the eigenvalue if it exists) is polynomial-time reducible to the computation
of the eigenvalue of a Shapley operator for which Algorithm~\ref{algo:main} terminates.
This fact is a direct consequence of Theorem~\ref{thm:polynomial-reduction} below. To do so, we shall need explicit bounds on the perturbation
parameter $\varepsilon$.

We first explain how the general case can be reduced to the situation
in which Assumption~\ref{asm:ergodicity} holds.
To that purpose, let use introduce for any real number $M \geq 0$, the map
$R_M: \R^n \to \R^n$ whose $i$th coordinate is given by
\[
  [R_M (x)]_i = \max \Big\{ x_i, \max_{1 \leq j \leq n} (-M + x_j)  \Big\} \enspace ,
  \quad x \in \R^n \enspace .
\]
It is convenient to introduce {\em Hilbert's seminorm} on $\R^n$, defined by
\[
  \Hnorm{x} = \max_{1 \leq i \leq n} x_i - \min_{1 \leq i \leq n} x_i \enspace .
\]
Observe that $R_M$ is a projection on the set $\{ x \in \R^n \mid \Hnorm{x} \leq M \}$,
meaning that $R^2_M = R_M$ and
\[
  R_M(x) = x \iff \Hnorm{x} \leq M \enspace .
\]

\begin{lemma}
  \label{lem:game-reduction}
  Let $T:\R^n \to \R^n$ be the Shapley operator of a perfect-information
  finite stochastic game, and let $M \geq 0$.
  Then, $T \circ R_M$ has an eigenvalue.
\end{lemma}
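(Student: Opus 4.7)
The plan is to establish the existence of an eigenvalue for $T \circ R_M$ by verifying the ergodicity condition of Theorem~\ref{thm:ergodicity-condition}, namely that the recession operator $\widehat{T \circ R_M}$ has only trivial (i.e., constant) fixed points. The argument reduces to a short asymptotic analysis of $R_M$ composed with the already-known recession operator of $T$.

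First, I would record the structural properties of $R_M$. Since $[R_M(x)]_i = \max\{x_i, \max_j(x_j-M)\}$ is a pointwise maximum of coordinate projections and shifted coordinate projections, $R_M$ is monotone, additively homogeneous, piecewise affine and sup-norm nonexpansive. The composition $T \circ R_M$ inherits these properties from both factors, and moreover it can be put into Shapley-operator form by distributing the outer max of $R_M$ through the affine expressions $r_i^{ab} + P_i^{ab} R_M(x)$ in~\eqref{eq:Shapley-operator}: this enlarges player \MAX's action set (the new actions being pairs consisting of a former \MAX action together with a choice, for every successor state $j$, of one of the finitely many arguments of the max defining $[R_M(\cdot)]_j$) while keeping it finite, and the resulting transition coefficients remain stochastic. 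Hence $T \circ R_M$ is the Shapley operator of a finite perfect-information stochastic game, to which Theorem~\ref{thm:ergodicity-condition} is applicable.

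Next I would compute $\widehat{T \circ R_M}$. For $\alpha > 0$,
\[
  \tfrac{1}{\alpha} R_M(\alpha x)_i = \max\bigl\{x_i,\; \max_j x_j - M/\alpha\bigr\} \xrightarrow[\alpha \to +\infty]{} \max_j x_j,
\]
so $\widehat{R_M}(x) = (\max_j x_j)\,\unit$. Moreover $\| R_M - \widehat{R_M}\|_\infty \leq M$, so the standard composition identity for recession operators of sup-norm nonexpansive maps (proved by writing $R_M(\alpha x) = \alpha \widehat{R_M}(x) + O(1)$, using sup-norm nonexpansiveness of $T$, and passing to the limit after dividing by $\alpha$) yields
\[
  \widehat{T \circ R_M}(x) = \widehat{T}\bigl(\widehat{R_M}(x)\bigr) = \widehat{T}\bigl((\max_j x_j)\,\unit\bigr) = (\max_j x_j)\,\unit,
\]
the last equality coming from additive homogeneity of $\widehat{T}$ together with $\widehat{T}(0) = 0$.

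Finally, the fixed-point equation $\widehat{T \circ R_M}(x) = x$ becomes $x_i = \max_j x_j$ for every $i$, which forces $x$ to be a constant multiple of $\unit$. Thus $\widehat{T \circ R_M}$ has only trivial fixed points, and Theorem~\ref{thm:ergodicity-condition} concludes that the ergodic equation is solvable for $T \circ R_M$, i.e., $T \circ R_M$ has an eigenvalue. The main technical step is the computation of $\widehat{R_M}$ and the commutation of the recession operation with the composition; both are routine thanks to the uniform bound $\|R_M - \widehat{R_M}\|_\infty \leq M$.
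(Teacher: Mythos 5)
Your proof is correct and follows essentially the same route as the paper: compute $\widehat{R}_M(x)=(\max_j x_j)\,\unit$, use the uniformity of the limit defining $\widehat{T}$ to get $\widehat{T\circ R}_M=\widehat{T}\circ\widehat{R}_M=(\max_j x_j)\,\unit$, conclude that the recession operator has only trivial fixed points, and invoke Theorem~\ref{thm:ergodicity-condition}. The extra verification that $T\circ R_M$ is itself the Shapley operator of a finite perfect-information game is a welcome detail that the paper defers to the discussion following the lemma.
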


\begin{proof}
  First, note that the recession operator of $\widehat{R}_M$ is given by
  \[
    \widehat{R}_M (x) = (\max x) \, \unit \enspace , \quad x \in \R^n \enspace ,
  \]
  where $\max x := \max_{1 \leq i \leq n} x_i$.
  Second, it has been noted in Subsection~\ref{sec:nonlinear-spectral-theory} that
  the limit~\eqref{eq:recession-operator} defining $\widehat{T}$ is uniform in $x$.
  Hence, we get that $\widehat{T \circ R}_M = \widehat{T} \circ \widehat{R}_M$.
  Thus, using the properties of recession operators, we have, for any vector $x \in \R^n$,
  \[
    \widehat{T \circ R}_M (x) = \widehat{T} \circ \widehat{R}_M (x)
    = \widehat{T} \big( (\max x) \, \unit \big) = (\max x) \, \unit \enspace .
  \]
  This proves that the only fixed points of $\widehat{T \circ R}_M$
  are trivial fixed points.
  The conclusion follows from Theorem~\ref{thm:ergodicity-condition}.
\end{proof}

Given a perfect-information finite stochastic game $\Gamma$ with Shapley operator $T$,
the operator $T \circ R_M$ can be interpreted as the Shapley operator of another
perfect-information finite stochastic game with state space $\state$.
In this game, at each step, if the current state is $i \in \state$, player \MIN start
by choosing an action $a \in A_i$.
Then, player \MAX chooses an action $b \in B_{i,a}$ which gives rise to
a transition payment $r_i^{a b}$ and a state, $j$, is chosen by nature with probability
$[P_i^{a b}]_j$ and announced to the players.
Finally, player \MAX chooses the next state to be either $j$ with no additional payment,
or to be any other state $k$ with an additional payment of $-M$.
In other words, player \MAX has the option of teleporting himself to any other state,
by accepting a penalty $M$. 

Note that, since player \MIN has the same action space in the latter game
as in the game $\Gamma$, the sets of her stationary strategies in both games are identical.
Then, for a fixed policy $\sigma$ of player \MIN, the one-player Shapley operator
$(T \circ R_M)^\sigma$ is equal to $T^\sigma \circ R_M$,
and we get from Lemma~\ref{lem:game-reduction} the following result.

\begin{corollary}
  \label{coro:ergodicity-assumption}
  Let $T:\R^n \to \R^n$ be the Shapley operator of
  a perfect-information finite stochastic game.
  Then, $T \circ R_M$ satisfies Assumption~\ref{asm:ergodicity}.
\end{corollary}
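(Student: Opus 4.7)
The plan is to reduce the claim directly to Lemma~\ref{lem:game-reduction}, applied policy-by-policy, making use of the identity $(T\circ R_M)^\sigma = T^\sigma\circ R_M$ noted just before the corollary statement.

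First, I would fix a policy $\sigma\in\polMIN$ and an additive perturbation vector $g\in\R^n$. By Definition~\ref{def-ergodicity}, what needs to be shown is that $g+(T\circ R_M)^\sigma$ has an eigenvector, for every such $g$. Using the identity $(T\circ R_M)^\sigma = T^\sigma\circ R_M$, one can rewrite
\[
g+(T\circ R_M)^\sigma(x) \;=\; g + T^\sigma(R_M(x)) \;=\; (g+T^\sigma)\circ R_M(x) \enspace .
\]
The point is that the shift by $g$ commutes with post-composition by $R_M$, since $R_M$ acts on the argument only.

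Next, I would observe that $g+T^\sigma$ is itself the Shapley operator of a perfect-information finite stochastic game: it corresponds to the one-player game (only \MAX has nontrivial actions, via $\sigma$) in which the transition payment $r_i^{\sigma(i)b}$ is replaced by $g_i+r_i^{\sigma(i)b}$, with unchanged transition probabilities. Therefore Lemma~\ref{lem:game-reduction} applies with $T$ replaced by $g+T^\sigma$, and yields that $(g+T^\sigma)\circ R_M$ admits an eigenvalue (and hence an eigenvector).

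Combining the previous two steps, $g+(T\circ R_M)^\sigma$ admits an eigenpair for every $g\in\R^n$, which is exactly the ergodicity of the one-player Shapley operator $(T\circ R_M)^\sigma$. Since this holds for each $\sigma\in\polMIN$, Assumption~\ref{asm:ergodicity} is satisfied for $T\circ R_M$. There is no real obstacle here: the whole content of the corollary is packaged in Lemma~\ref{lem:game-reduction}, and the only substantive verification is the commutation $g+T^\sigma\circ R_M=(g+T^\sigma)\circ R_M$ together with the remark that $g+T^\sigma$ is still a Shapley operator, so that the lemma may be re-invoked on the perturbed one-player game.
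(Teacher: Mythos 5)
Your proof is correct and follows essentially the same route as the paper: the identity $(T\circ R_M)^\sigma = T^\sigma\circ R_M$ reduces everything to Lemma~\ref{lem:game-reduction} applied to a one-player Shapley operator. The only (harmless) difference is that you absorb the perturbation $g$ into the transition payments and re-invoke the lemma's stated conclusion, whereas the paper implicitly uses the fact that the lemma's proof already yields ergodicity of the composed operator (trivial fixed points of the recession operator, hence solvability for every $g$ via Theorem~\ref{thm:ergodicity-condition}); both readings are valid.
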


In the modified game with Shapley operator $T \circ R_M$, player \MAX makes, at each step,
the final decision about the next state, provided an additional cost of $M$.
The following result shows that if this cost is large enough,
then player \MAX cannot do better, in the long run, than in the game $\Gamma$.

\begin{lemma}
  \label{lem:homogeneization}
  Let $T:\R^n \to \R^n$ be the Shapley operator of
  a perfect-information finite stochastic game.
  Then, there exists a positive constant $M_0$ such that for any $M > M_0$,
  the eigenvalue of $T \circ R_M$ is equal to the upper mean payoff $\chibar(T)$.
\end{lemma}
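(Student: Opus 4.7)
The plan is to sandwich $\lambda(T \circ R_M)$ between $\chibar(T)$ from above and below, once $M$ is sufficiently large. The lower bound $\lambda(T \circ R_M) \geq \chibar(T)$ holds for every $M$: since $R_M(x) \geq x$ componentwise, monotonicity of $T$ yields $T \circ R_M \geq T$, hence $\chi(T \circ R_M) \geq \chi(T)$; Corollary~\ref{coro:ergodicity-assumption} ensures that $T \circ R_M$ is ergodic, so $\chi(T \circ R_M) = \lambda(T \circ R_M)\unit$, and taking the largest entry gives the claim.

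For the upper bound I would use the Collatz--Wielandt characterization recalled in the proof of Lemma~\ref{lem:chibar-characterization}: it suffices to exhibit a single $u \in \R^n$ with $T(u) \leq u + \chibar(T)\unit$ and $\Hnorm{u} \leq M_0$ for some explicit $M_0$. Indeed, such a $u$ satisfies $R_M(u) = u$ as soon as $M \geq M_0$, whence $(T \circ R_M)(u) = T(u) \leq u + \chibar(T)\unit$, and the Collatz--Wielandt formula then forces $\lambda(T \circ R_M) = \chibar(T \circ R_M) \leq \chibar(T)$. To build such a $u$ I would reduce to a one-player subgame. By Lemma~\ref{lem:chibar-characterization}, $\chibar(T) = \min_{\sigma \in \polMIN} \chibar(T^\sigma)$; pick any minimizer $\sigma^* \in \polMIN$. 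Kohlberg's invariant half-line theorem applied to the piecewise affine sup-norm nonexpansive operator $T^{\sigma^*}$ produces vectors $w,\nu \in \R^n$, with $\nu = \chi(T^{\sigma^*})$, satisfying $T^{\sigma^*}(w + \alpha\nu) = w + (\alpha+1)\nu$ for all $\alpha$ large enough. Fixing any such $\alpha_0$ and setting $u := w + \alpha_0 \nu$, we get $T^{\sigma^*}(u) = u + \nu$. Since $\nu \leq \chibar(T^{\sigma^*})\unit = \chibar(T)\unit$ by definition of $\chibar$, and since $T \leq T^{\sigma^*}$ by~\eqref{eq:T-min}, we conclude $T(u) \leq u + \chibar(T)\unit$. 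Taking $M_0 := \Hnorm{u}$, a finite quantity, then closes the argument.

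The main subtlety is precisely this Hilbert-seminorm bound on $u$. A naive attempt using an invariant half-line of $T$ itself breaks down whenever $\chi(T)$ is non-constant: the vector $w + \alpha \chi(T)$ does satisfy the required Collatz--Wielandt inequality for $\alpha$ large, but its Hilbert seminorm diverges with $\alpha$, so no finite $M_0$ can be extracted. Freezing \MIN to a minimizing policy $\sigma^*$ is what rescues the argument: the one-player operator $T^{\sigma^*}$ retains upper mean payoff $\chibar(T)$, and we only need \emph{one} fixed point on its invariant half-line, which is by definition a vector of $\R^n$ with finite Hilbert seminorm. The resulting $M_0$ depends on the choice of $\sigma^*$ and of $\alpha_0$, but existence is all that is asserted.
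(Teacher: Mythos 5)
Your proof is correct, and its skeleton is the same as the paper's: bound $\lambda(T\circ R_M)$ below by $\chibar(T)$ via $R_M(x)\geq x$ and monotonicity, and above by exhibiting a single vector $u$ with $T(u)\leq u+\chibar(T)\unit$ and $\Hnorm{u}\leq M_0$, so that $R_M(u)=u$ and the Collatz--Wielandt formula applies. The only divergence is in how you produce the witness $u$: you freeze \MIN\ at a policy $\sigma^*$ minimizing $\chibar(T^\sigma)$ and take a point on the invariant half-line of $T^{\sigma^*}$, then use $T\leq T^{\sigma^*}$. That works, but the detour is unnecessary, and the ``main subtlety'' you invoke to justify it is not a real obstruction. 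The paper simply takes the invariant half-line $\alpha\mapsto w+\alpha\chi(T)$ of $T$ itself and fixes \emph{one} point $u=w+\alpha_0\chi(T)$ on it (for a single large enough $\alpha_0$); this is a fixed vector of $\R^n$, so $\Hnorm{u}$ is a finite number, and $T(u)=u+\chi(T)\leq u+\chibar(T)\unit$ is exactly the Collatz--Wielandt certificate needed. The seminorm of $w+\alpha\chi(T)$ does grow with $\alpha$ when $\chi(T)$ is nonconstant, but that is irrelevant: the Collatz--Wielandt infimum requires only one witness, not a uniform bound along the half-line. So your argument is valid, but the direct argument you dismiss as ``naive'' is precisely the paper's proof, and it is both correct and shorter.
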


\begin{proof}
  First, note that $R_M(x) \geq x$ for all $x \in \R^n$.
  Hence, by monotonicity of $T$, we deduce that $T \circ R_M \geq T$,
  which yields $\chi(T \circ R_M) \geq \chi(T)$.
  Since $T \circ R_M$ has an eigenvalue, denoted by $\lambda(T \circ R_M)$,
  then we have $\lambda(T \circ R_M) \geq \chibar(T)$.

  Second, we know that $T$ has an invariant half-line with direction $\chi(T)$.
  So there exists a vector $u \in \R^n$ such that $T(u) = u + \chi(T)$.
  Now let $M_0 := \Hnorm{u}$.
  For every $M > M_0$, we have
  \[
    T \circ R_M (u) = T(u) = u + \chi(T) \leq u + \chibar(T) \unit \enspace .
  \]
  By application of a Collatz-Wielandt formula (see~\cite{GG04}), we know that the eigenvalue
  of $T \circ R_M$ is given by
  \[
    \lambda(T \circ R_M) = \inf \{ \mu \in \R \mid
    \exists u \in \R^n, \; T \circ R_M(u) \leq u + \mu \unit \} \enspace .
  \]
  Hence $\lambda(T \circ R_M) \leq \chibar(T)$.
\end{proof}

We shall need a technical bound on invariant probability measures of stochastic matrices
arising from strategies.
We state it here for an arbitrary irreducible stochastic matrix.

\begin{lemma}
  \label{lem-matrix}
  Let $P$ be a $n\times n$ irreducible stochastic matrix whose entries are rational
  numbers with numerators and denominators bounded by an integer $D$.
  Then, the entries of the invariant probability measure of $P$ are rational
  numbers whose least common denominator is bounded by
  \begin{equation*}
    \label{eq:bound-denominators}
    n^{n/2} D^{n^2} 
  \end{equation*}
\end{lemma}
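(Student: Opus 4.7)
The plan is to express $m$ via Cramer's rule applied to a nonsingular linear system derived from the invariance relation plus normalization, then clear denominators row by row to reduce to an integer system, and finally bound the resulting integer determinant by Hadamard's inequality.

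Since $P$ is irreducible, $m$ is the unique vector in $\R^n$ satisfying $(I-\transpose{P})m=0$ and $\transpose{\unit} m = 1$. Equivalently, $Am=\unit_1$, where $\unit_1 := \transpose{(1,0,\dots,0)}$ and $A$ is the $n\times n$ matrix obtained from $I-\transpose{P}$ by replacing its first row with $\transpose{\unit}=(1,\dots,1)$. The matrix $A$ is nonsingular because $\transpose{\unit}\, m = 1 \neq 0$ ensures that $m$, which spans the kernel of $I-\transpose{P}$, is not annihilated by the replacement row. Cramer's rule then yields $m_i = \det(A_i)/\det(A)$, where $A_i$ denotes $A$ with its $i$-th column replaced by $\unit_1$.

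I would then clear denominators row by row. For each $k \in \{2,\dots,n\}$, let $\ell_k$ be the least common multiple of the denominators of the $n$ entries in row $k$ of $A$; since each denominator is at most $D$, we have $\ell_k \leq D^n$. Multiplying row $k$ by $\ell_k$ for each $k\geq 2$ (row $1$ is already integer) produces an integer matrix $\tilde A$, and since $(\unit_1)_k=0$ for $k\geq 2$, the rescaled system still reads $\tilde A m = \unit_1$. Hence $m_i = \det(\tilde A_i)/\det(\tilde A)$ is a ratio of integers, and the least common denominator of the entries of $m$ divides $|\det(\tilde A)|$.

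It remains to bound $|\det(\tilde A)|$ via Hadamard's inequality. Row $1$ of $\tilde A$ has Euclidean norm $\sqrt{n}$; for $k \geq 2$, the entries of $A$ in row $k$ are of the form $\delta_{kj}-P_{jk} \in [-1,1]$, so the entries of $\tilde A$ in that row are integers of absolute value at most $\ell_k$ and the row has Euclidean norm at most $\sqrt{n}\,\ell_k$. Hadamard's inequality then gives
\[
|\det(\tilde A)| \;\leq\; \sqrt{n} \prod_{k=2}^{n} \sqrt{n}\,\ell_k
\;=\; n^{n/2} \prod_{k=2}^{n} \ell_k
\;\leq\; n^{n/2} D^{n(n-1)} \;\leq\; n^{n/2} D^{n^2} \enspace ,
\]
which is the claimed bound. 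The only real subtlety I anticipate is keeping the LCMs row-wise rather than replacing all denominators by a single global common denominator, so that each row contributes only a factor $\ell_k \leq D^n$ and the total exponent on $D$ stays at $n(n-1)$, comfortably within the prescribed $n^2$.
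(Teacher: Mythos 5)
Your proof is correct and follows essentially the same route as the paper's: replace the redundant row of $(I-\transpose{P})m=0$ by the normalization $\transpose{\unit}\,m=1$, clear denominators row by row to obtain an integer Cramer system, and bound its determinant by Hadamard's inequality (your row-wise LCM bookkeeping even yields the slightly sharper exponent $n(n-1)$, comfortably within $n^2$). The only implicit step is that the nonsingularity of $A$ relies on the discarded row of $I-\transpose{P}$ being a linear combination of the others (the columns of $I-\transpose{P}$ sum to zero), so that the kernel of the remaining $n-1$ rows is still exactly the line spanned by $m$; the paper makes this redundancy explicit.
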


\begin{proof}
  The invariant probability measure $m$ of $P$ is the unique solution of the linear system
  \begin{equation}
    \label{eq:invariant-measure}
    \begin{cases}
      (I - \transpose{P}) \, m = 0\\
      \transpose{\unit} \, m = 1
    \end{cases} \enspace ,
  \end{equation}
  where $I$ is the identity matrix.
  Note that one row of the subsystem $(I - \transpose{P}) \, m = 0$ is redundant
  since we are dealing with stochastic vectors and $\transpose{\unit} \, m = 1$.
  Then, by deleting this row, and by multiplying every row of the latter subsystem
  by all the denominators of the coefficients appearing in this row, we arrive at
  a Cramer linear system with integer coefficients of absolute value less than $ D^n$,
  and with unit coefficients on the last row. 
  Solving this system by Cramer's rule, we obtain that the entries of $m$ are rational
  numbers whose denominators divide the determinant of the system.
  Using Hadamard's inequality for determinants, we deduce that these denominators are bounded by
  \begin{flalign*}
    && big( (n-1) (D^n)^2 + 1 \big)^{n/2}
    \leq n^{n/2} D^{n^2} 
    \enspace . && \qed
  \end{flalign*}
\renewcommand{\qedsymbol}{}
\end{proof} 

Let $T: \R^n \to \R^n$ be the Shapley operator of a perfect-information finite stochastic game
$\Gamma$ with state space $\{1,\dots,n\}$.
We just showed that the upper mean payoff of $T$ can be recovered
from the eigenvalue of the operator $T \circ R_M$ 
for a suitable large $M$.
The latter operator satisfies Assumption~\ref{asm:ergodicity}, and so, we can in principle apply Algorithm~\ref{algo:main} to it. 
However, to do so in a way which leads to a polynomial-time transformation
of the input, it is convenient to introduce the following
modified Shapley operator $T_M: \R^{2n} \to \R^
{2n}$
given, for all $(x,y) \in \R^n \times \R^n$, by
\[
  T_M(x,y) := \left( T(y),R_M(x) \right) \enspace . 
\]
Note that we have
\begin{equation*}
  (T_M)^2(x,y) =
  \begin{pmatrix}
    T \circ R_M(x) \\ R_M \circ T (y)
  \end{pmatrix} \enspace .
  \label{eq:T^2_M}
\end{equation*}
The following immediate lemma shows that one can recover the bias
vectors and the eigenvalue of $T\circ R_M$ from those of $T_M$.
\begin{lemma}\label{lemma-transform}
If $v$ is a bias vector of the operator $T\circ R_M$
with eigenvalue $\lambda$, then $(v,R_M(v)-(\lambda/2) \unit )$ is a bias
vector of the operator $T_M$ with eigenvalue $\lambda/2$,
and all bias vectors of $T_M$ arise in this way.\hfill\qed
\end{lemma}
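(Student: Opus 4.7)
The proof will be a direct computation using the additive homogeneity of both $T$ and $R_M$, together with the structure of $T_M$. Before starting, observe that $R_M$ inherits additive homogeneity from its definition: for every $c\in\R$,
\[
  [R_M(x+c\unit)]_i = \max\Bigl\{x_i+c,\ \max_{1\leq j\leq n}(-M+x_j+c)\Bigr\} = c + [R_M(x)]_i .
\]
Hence both coordinate maps of $T_M$ commute with translations by multiples of $\unit$.

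For the forward direction, assume $T\circ R_M(v) = v + \lambda\unit$ and set $w := R_M(v) - (\lambda/2)\unit$. Applying $T_M$ coordinatewise and using additive homogeneity of $T$, the first coordinate of $T_M(v,w)$ is
\[
  T(w) = T(R_M(v)) - (\lambda/2)\unit = v + \lambda\unit - (\lambda/2)\unit = v + (\lambda/2)\unit,
\]
and the second coordinate is simply $R_M(v) = w + (\lambda/2)\unit$. Therefore $T_M(v,w) = (v,w) + (\lambda/2)\unit_{2n}$, showing $(v,w)$ is a bias of $T_M$ with eigenvalue $\lambda/2$.

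For the converse, suppose $(x,y)\in\R^n\times\R^n$ satisfies $T_M(x,y) = (x,y) + \mu\unit_{2n}$. Unpacking the two coordinates gives $T(y) = x + \mu\unit$ and $R_M(x) = y + \mu\unit$. Substituting the second identity into the first and using additive homogeneity of $T$,
\[
  T\circ R_M(x) = T(y + \mu\unit) = T(y) + \mu\unit = x + 2\mu\unit,
\]
so $x$ is a bias of $T\circ R_M$ with eigenvalue $2\mu$. Setting $v:=x$ and $\lambda:=2\mu$, the identity $R_M(x) = y + \mu\unit$ rearranges to $y = R_M(v) - (\lambda/2)\unit$, exactly the form claimed.

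There is essentially no obstacle here — the entire statement reduces to two routine identities once additive homogeneity of $R_M$ is noted. The only mild subtlety is the factor of two relating the eigenvalues of $T_M$ and $T\circ R_M$, which arises because iterating $T_M$ once applies each of $T$ and $R_M$ exactly once, so the ``doubled'' dynamics on the first coordinate is $(T_M)^2$, whose eigenvalue matches that of $T\circ R_M$.
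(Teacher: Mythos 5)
Your proof is correct and is exactly the direct computation the authors have in mind — the paper states the lemma as ``immediate'' with no written proof, and your two verifications (that $(v,R_M(v)-(\lambda/2)\unit)$ is fixed up to $(\lambda/2)\unit_{2n}$ by $T_M$, and that any eigenpair of $T_M$ unpacks to one of $T\circ R_M$ with doubled eigenvalue) are precisely what ``immediate'' refers to. Nothing to add.
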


The operator $T_M$ is the dynamic programming operator of a game, denoted by $\Gamma_M$,
with state space $\{1,\dots,2n\}$.
In each state $i \in \{1,\dots,n\}$, the actions, the payments and the transition function
are the same as in $\Gamma$, except that the next state is labeled
by an element of $\{n+1,\dots,2n\}$ instead of $\{1,\ldots,n\}$. 
Moreover, in each state $i \in \{n+1,\dots,2n\}$, player \MIN has only one possible action, while
player \MAX chooses the next state $j$ among $\{1,\dots,n\}$ with a cost $M$ if $i-j \neq n$.
In particular, the policies of player \MIN 
in the two games $\Gamma$ and $\Gamma_M$ are in one-to-one correspondence, and
to simplify the presentation, we shall use the same notation for these policies.
Hence, we shall write $(T_M)^\sigma(x,y) = (T^\sigma(y),R_M(x))$ for such a policy $\sigma$.

We saw in Lemma~\ref{lem:game-reduction} that the operator
$T\circ R_M$ has an eigenvalue.
The same is true for the operator $T_M$ by Lemma~\ref{lemma-transform}.
Moreover, the same conclusion applies to the operator $(T_M)^\sigma$ for any
policy $\sigma$, so that $T_M$ satisfies Assumption~\ref{asm:ergodicity}.
We know that for $\varepsilon>0$ small enough, the perturbed
operator $g_\varepsilon +T\circ R_M$ has a unique bias vector,
up to an additive constant,
where $g_\varepsilon = (\varepsilon,\dots,\varepsilon^n)$.
This leads to considering, for $\varepsilon > 0$, 
\[
  T_{M,\varepsilon} := (g_\varepsilon,0) + T_M \enspace .
\]

\begin{theorem}
  \label{thm:polynomial-reduction}
  Let $\Gamma$ be a perfect-information finite stochastic game whose transition payments
  and probabilities are rational numbers with numerators and denominators bounded
  by an integer $D\geq 2$.
  Let $T:\R^n \to \R^n$ be the Shapley operator of $\Gamma$.
  If
  \[
    M >  4 n^{n/2} D^{n^2+1} \quad 
\text{and} \quad
    0 < \varepsilon <  \frac{1}{ n^n D^{2 n (n+1)} } \enspace ,
  \]
  then the upper mean payoff of $T$ can be recovered from
  $T_{M,\varepsilon} = (g_\varepsilon,0) + T_M$, in the sense that for any policy $\sigma$
  of player \MIN such that $\lambda(T_{M,\varepsilon}) = \lambda((T_{M,\varepsilon})^\sigma)$,
  we have $\chibar(T) = \chibar(T^\sigma)$.
  Furthermore, such
  a policy can be obtained by applying Algorithm~\ref{algo:main}
  with the input $T_{M,\varepsilon}$.
\end{theorem}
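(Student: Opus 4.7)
The plan is to combine the homogenization scheme of Lemma~\ref{lem:game-reduction}--Lemma~\ref{lem:homogeneization} with the perturbation scheme of the preceding subsection, turning both qualitative parameters (the $M$ of Lemma~\ref{lem:homogeneization} and the $\varepsilon_1$ of Proposition~\ref{prop:perturbation-scheme}--Proposition~\ref{prop:optimal-policies}) into explicit arithmetic bounds in terms of $n$ and $D$. Once this is done, Algorithm~\ref{algo:main} can be run on $T_{M,\varepsilon}$, and the equality $\chibar(T) = \chibar(T^\sigma)$ will follow by transferring the output policy back through $T_M \to T\circ R_M \to T$.

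First, I would quantify Lemma~\ref{lem:homogeneization} uniformly over all one-player reductions. Applied to $T$ and to each $T^\sigma$ with $\sigma \in \polMIN$, the lemma requires $M$ larger than the Hilbert seminorm of any vector $u$ satisfying $T(u) = u + \chi(T)$ (resp.\ $u^\sigma$ satisfying $T^\sigma(u^\sigma) = u^\sigma + \chi(T^\sigma)$). Such $u^\sigma$ can be obtained as a Cramer solution of the linear system $(I - P^{\sigma\tau})u^\sigma = r^{\sigma\tau} - \chibar(T^\sigma)\unit$ restricted to a final class of $P^{\sigma\tau}$ for a $\tau$ attaining the maximum in~\eqref{eq:eigenvalue-convex}, and completed off the final classes by the ergodicity structure. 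After clearing denominators of $P^{\sigma\tau}$ and $r^{\sigma\tau}$, the integer coefficients of this system are bounded in magnitude by $O(D^n)$, so Hadamard's inequality (as in the proof of Lemma~\ref{lem-matrix}) bounds $\Hnorm{u^\sigma}$ by $2\,n^{n/2} D^{n^2+1}$. The hypothesis $M > 4\,n^{n/2} D^{n^2+1}$ therefore forces $\lambda(T\circ R_M) = \chibar(T)$ and $\lambda(T^\sigma\circ R_M) = \chibar(T^\sigma)$ for every policy $\sigma$.

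Second, I would quantify Proposition~\ref{prop:optimal-policies} applied to $T_M$. By Corollary~\ref{coro:ergodicity-assumption} and Lemma~\ref{lemma-transform}, $T_M$ satisfies Assumption~\ref{asm:ergodicity}. The polyhedral complex $\C'$ relevant for $T_M$ is obtained by refining the hyperplane arrangement defined by the affine pieces of $g \mapsto \lambda^\sigma((g,0))$ over all $\sigma$; each bounding hyperplane has the form $\langle m_1 - m_2,\,g\rangle = c$, where $m_1, m_2$ are extreme invariant probability measures of matrices $P^{\sigma\tau}$ and $c$ is a rational number expressible through the payments and the measures. Lemma~\ref{lem-matrix} bounds the least common denominator of $m_1 - m_2$ by $n^{n/2} D^{n^2}$, and after clearing all denominators, each hyperplane becomes a polynomial equation with integer coefficients of magnitude at most $D^{O(n^2)}$. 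Substituting $g = g_\varepsilon = (\varepsilon,\dots,\varepsilon^n)$ yields a nonzero univariate polynomial in $\varepsilon$ of degree at most $n$, whose least positive root is bounded below, via Cauchy's root bound, by $1/(n^n D^{2n(n+1)})$. For all $\varepsilon$ below this threshold, $(g_\varepsilon,0)$ lies in the interior of a common full-dimensional cell $Q$ of $\C'$.

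With both bounds in hand, Theorem~\ref{thm:generic-termination} applied to $T_M$ ensures that Algorithm~\ref{algo:main} terminates on input $T_{M,\varepsilon}$ and returns a policy $\sigma$ with $\lambda(T_{M,\varepsilon}) = \lambda((T_{M,\varepsilon})^\sigma)$. The affine maps $g \mapsto \lambda((g,0) + T_M)$ and $g \mapsto \lambda((g,0) + (T_M)^\sigma)$ agree at $(g_\varepsilon,0)$, hence on all of $Q$ by affinity, and in particular at $g = 0$, yielding $\lambda(T_M) = \lambda((T_M)^\sigma)$. Via Lemma~\ref{lemma-transform}, this transfers to $\lambda(T\circ R_M) = \lambda(T^\sigma \circ R_M)$, and Step~1 then identifies both sides with $\chibar(T)$ and $\chibar(T^\sigma)$ respectively. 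The main obstacle is the sharp quantitative tracking of bit sizes through the combinatorial structure of $\C'$ and through the Cramer systems defining the eigenvectors $u^\sigma$: the qualitative statements of Proposition~\ref{prop:perturbation-scheme} and Proposition~\ref{prop:optimal-policies} must be upgraded into uniformly effective bounds, and only the delicate interplay between the denominators provided by Lemma~\ref{lem-matrix} and Cauchy's root bound produces the precise exponents $n^{n/2} D^{n^2+1}$ and $n^n D^{2n(n+1)}$ appearing in the statement.
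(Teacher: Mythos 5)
Your overall architecture (make $M$ and $\varepsilon$ explicit, run Algorithm~\ref{algo:main} on $T_{M,\varepsilon}$, transfer the output policy back through $T_M \to T\circ R_M \to T$) matches the paper's, and your final transfer step --- two affine maps on the cell $Q$ satisfying $\lambda((g,0)+T_M)\leq\lambda((g,0)+(T_M)^\sigma)$ and agreeing at the interior point $(g_\varepsilon,0)$ must agree on $Q$, hence at $g=0$ by continuity --- is a correct and even slightly slicker route than the paper's. However, the way you instantiate the two constants diverges from the paper and contains genuine gaps.

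First, your bound on $M$ goes through Lemma~\ref{lem:homogeneization}, i.e.\ through a bound on $\Hnorm{u^\sigma}$ for a bias vector of each one-player operator $T^\sigma$. The claim that Hadamard's inequality gives $\Hnorm{u^\sigma}\leq 2n^{n/2}D^{n^2+1}$ is not justified: $P^{\sigma\tau}$ is in general multichain, so $u^\sigma$ is not the solution of a single Cramer system over a final class. Off the final classes, $u^\sigma$ solves $(I-P_{TT})\,u_T = (r-\chi)_T + P_{TR}\,u_R$, whose right-hand side already carries the denominators of the mean-payoff vector $\chi$ (of size up to $n^{n/2}D^{n^2+n}$ by Lemma~\ref{lem-matrix}); composing this with a second determinant bound inflates the estimate to roughly $n^nD^{2n^2+O(n)}$, well above $4n^{n/2}D^{n^2+1}$. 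The paper avoids bias vectors entirely: it analyzes the invariant measures $(m,m')$ of the $2n\times 2n$ block matrices~\eqref{eq:transition-marix} of $\Gamma_M$ and shows that the quantity $\gamma=\<m,(g+r^{\sigma\tau})>-M\sum_{Q_{ii}=0}m'_i$ drops below $\tfrac12\min_{i,a,b}r_i^{ab}$ as soon as $M>K_1^{-1}(1+(3/2)\|r\|_\infty)$, where $K_1$ is a lower bound on the positive entries of $m'$, controlled by Lemma~\ref{lem-matrix} applied to $QP^{\sigma\tau}$. That is what produces the stated constant, and it proves the stronger identity $\lambda((g,0)+(T_M)^\sigma)=\tfrac12\chibar(g+T^\sigma)$ for \emph{all} $g\in[0,1]^n$.

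Second, this uniform-in-$g$ domination is also what your Step~2 silently relies on. The complex $\C'$ you build is attached to the game $\Gamma_M$, whose affine pieces a priori have constant terms of the form $\<m,r^{\sigma\tau}>-M\sum_{Q_{ii}=0}m'_i$; without the paper's computation you cannot assert that the hyperplanes cutting $\C'$ near the curve $\varepsilon\mapsto(g_\varepsilon,0)$ have coefficients bounded in terms of $n$ and $D$ only, so your Cauchy root bound would depend on $M$ and the threshold $1/(n^nD^{2n(n+1)})$ would not follow. Once the teleporting measures are shown to be uniformly strictly dominated on $[0,1]^n$, the active pieces are exactly those of the original game $\Gamma$, and the pairwise comparison of the polynomials $\varepsilon\mapsto\<m,g_\varepsilon>+d$ (separating the cases $d=d'$ and $d\neq d'$, as the paper does) yields the stated threshold. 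So the missing ingredient in both places is the same: the explicit penalty-versus-measure estimate in the lifted game, which you would need to supply to make your constants come out.
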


\begin{proof}
  Let $g \in [0,1]^n$, and fix a policy $\sigma$ of player \MIN.
  In the game $\Gamma_M$, consider a policy of player \MAX such that,
  when in state $i \in \{1,\dots,n\}$, he chooses action $b_i \in B_{i,\sigma(i)}$,
  and when in state $i \in \{n+1,\dots,2n\}$, he chooses
  the next state to be $j(i) \in \{1,\dots,n\}$.
  Then, the transition matrix associated with that choice of policy is
  the following $2n \times 2n$ block matrix:
  \begin{equation}
    \label{eq:transition-marix}
    \begin{pmatrix}
      0 & P^{\sigma \tau} \\ Q & 0
    \end{pmatrix} \enspace ,
  \end{equation}
  where $\tau \in \polMAX$ is a policy of player \MAX in the game $\Gamma$
  such that $\tau(i,\sigma(i)) = b_i$ for each state $i$, and where $Q$ is the $n \times n$
  stochastic matrix whose coefficients are $Q_{i j} = 1$ for $j=j(i+n)$ and $0$ otherwise.

  Let $(m,m') \in \R^n \times \R^n$ be an invariant probability measure of the
  stochastic matrix~\eqref{eq:transition-marix}.
  The vectors $m$ and $m'$ satisfy in particular
  \begin{equation}
    \label{eq:invariant-measures}
    \transpose{m}\,  P^{\sigma \tau} =  \transpose{m'}\enspace , \quad  \transpose{m'} Q =  \transpose{m} \enspace ,
    \quad \<m,e> + \<m',e> = 1 \enspace .
  \end{equation}
  We are interested in the eigenvalue of the perturbed one-player Shapley operator
  $(g,0) + (T_M)^\sigma$.
  Hence, following formula~\eqref{eq:eigenvalue-convex}, we consider the quantity
  \[
    \gamma := \<m,(g+r^{\sigma \tau})> -
    M \sum_{\substack{1 \leq i \leq n\\ Q_{i i} = 0}} m'_i \enspace .
  \]

  If for every index $i$ in the support of $m'$, we have $Q_{i i} = 1$,
  then we deduce from the second equality in~\eqref{eq:invariant-measures} that $m' = m$.
  This yields that $2m$ is an invariant probability measure of $P^{\sigma \tau}$, and that
  \[
    \gamma = \<m,(g+r^{\sigma \tau})> \leq
    \frac{1}{2} \, \chibar(g+T^\sigma) \enspace .
  \]
  Note that the equality is attained in the above inequality
  for some policy $\tau$ and some invariant probability measure $m$.

  If there is an index $i$ in the support of $m'$ such that $Q_{i i} = 0$, then we have
  \[
    \gamma \leq \<m,(g+r^{\sigma \tau})> - M \, m'_i 
    \leq 1 + \max_{i,a,b} r_i^{a b} - K_1 \, M \enspace ,
  \]
  where $K_1$ is a positive constant such that $K_1 \leq m'_i$.
  Note that $K_1$ can be chosen independently of $M$ and of the particular choices of
  the policies.
  Then, taking $M > M_0 := (K_1)^{-1} (1 + (3/2) \|r\|_\infty)$, we obtain that
  \[
    \gamma < \frac{1}{2} \, \min_{i,a,b} r_i^{a b}
    \leq \frac{1}{2} \, \chibar(g+T^\sigma) \enspace .
  \]

  Thus, we have proved that, for all policies $\sigma$ of player \MIN
  and for all $g \in [0,1]^n$, we have
  \[
    \lambda \big( (g,0)+(T_M)^\sigma \big) =
    \frac{1}{2} \, \chibar(g+T^\sigma) \enspace ,
  \]
  as soon as $M > M_0$.
  In particular, the choice of the parameter $\varepsilon$ such that $T_{M,\varepsilon}$
  is a generic instance only relies on $T$ (Proposition~\ref{prop:optimal-policies}).

  We now fix some $M > M_0$.
  Consider, for policies $\sigma, \sigma'$ of player \MIN and $\tau, \tau'$ of player \MAX,
  two distinct pairs $(m,d) \neq (m',d')$, where $m \in \M^*(P^{\sigma \tau})$,
  $m' \in \M^*(P^{\sigma' \tau'})$, $d := \<m, r^{\sigma \tau}>$
  and $d' := \<m', r^{\sigma' \tau'}>$.
  We need to compare the affine maps $g \mapsto \<m,g>+d$ and $g \mapsto \<m',g>+d'$
  along the curve $\varepsilon \mapsto g_\varepsilon$ with $\varepsilon \in (0,1)$.

  Assume first that $d = d'$.
  Then $m \neq m'$ and we can select the smallest index $i$ such that $m_i \neq m'_i$.
  Note that since $m$ and $m'$ are stochastic vectors, we necessarily have $i < n$
  and we also have the existence of another index $j$ such that $i < j \leq n$ and $m_j \neq m'_j$.
  Without loss of generality, we may assume that $m_i - m'_i > 0$.
  Let $K_2 \in \R$ be such that $0 < K_2 < m_i - m'_i$.
  Then, for any positive parameter $\varepsilon < K_2 \, n^{-1}$, we have
  \begin{multline*}
    (\<m,g_\varepsilon> + d) - ( \<m',g_\varepsilon> + d' )
    = (m_i - m'_i) \varepsilon^i + \sum_{i < j \leq n} (m_j-m'_j) \varepsilon^j \\
    > K_2 \, \varepsilon^i - n \varepsilon^{i+1} = \varepsilon^i \, (K_2 - n \varepsilon)
    > 0 \enspace .
  \end{multline*}
  Assume now that $d \neq d'$, say $d > d'$, and let $K_3 \in \R$ be such that $0 < K_3 < d - d'$.
  Then, for any positive parameter $\varepsilon < K_3$, we have
  \[
    (\<m,g_\varepsilon> + d) - ( \<m',g_\varepsilon> + d' )
    > \varepsilon^n - \varepsilon + K_3
    > 0 \enspace .
  \]
  Note that we can choose the positive constants $K_2$ and $K_3$ independently of
  $\sigma$, $\sigma'$, $\tau$, $\tau'$, $m$ and $m'$.
  Hence, the above arguments show that the set of polynomial functions
  $\varepsilon \mapsto \<m,(g_\varepsilon+r^{\sigma \tau})>$  with $\sigma \in \polMIN$,
  $\tau \in \polMAX$, and $m \in \M^*(P^{\sigma \tau})$, is totally ordered
  if $\varepsilon$ is restricted to the interval $(0 , \min \{K_2 n^{-1},K_3\} )$.
  Thus, the parameter $\varepsilon_1$ of Proposition~\ref{prop:optimal-policies}
  may be taken equal to $\min \{ K_2 n^{-1},K_3 \}$.

  To complete the proof, we next explain how to instantiate the constants $K_1$ to $K_3$.
  Let us start with $K_2$.
  It is a lower bound on the absolute values of the differences between two distinct
  entries (with same index) of invariant probability measures associated with
  the transition matrices of $\Gamma$.
  These differences are of the form $|p_1/q_1-p_2/q_2| \geq 1/(q_1q_2)$,
  where $p_1,p_2,q_1,q_2$ are integers.
  By Lemma~\ref{lem-matrix}, we know that $q_1,q_2 \leq n^{n/2}D^{n^2}$, and so
one can choose
  \[
    K_2 = \frac{1}{ n^n D^{2 n^2}} \enspace .
  \]

  Likewise, $K_3$ is a lower bound on the absolute values of the differences between
  two distinct scalar products $\<m,r^{\sigma \tau}>$.
  Let $m \in \M(P^{\sigma \tau})$.
  It follows from  Lemma~\ref{lem-matrix} that the $i$th entry of $m$ can be
  written as $m_i = p_i / q$ where $p_i$ is an integer and $q \leq n^{n/2} D^{n^2}$ is
  an integer independent of $i$.
  Since every entry of $r^{\sigma \tau}$ has a denominator at most $D$, it follows that
  $\<m,r^{\sigma \tau}>$ is a rational number with denominator at most $n^{n/2} D^{n^2} D^n$.
  Therefore, the difference between two distinct values of $\<m,r^{\sigma \tau}>$ is at least
  $K_3=(n^{n} D^{2n^2} D^{2n})^{-1}$, and so
  \[
    \min \{ K_2 n^{-1},K_3 \} =
    \frac{1}{n^n D^{2 n^2}} \min \left\{ \frac{1}{n}, \frac{1}{D^{2n}} \right\} 
    = \frac{1}{n^nD^{2n(n+1)}}\enspace .
  \]
  Finally the constant $K_1$ is a lower bound for the positive entries of 
the restrictions $m'$  of the invariant probability measures $(m,m')$
 of the transition matrices~\eqref{eq:transition-marix}
  arising in the game $\Gamma_M$.
  A direct application of Lemma~\ref{lem-matrix} provides the following coarse bound:
  \[
    K_1 = \frac{1}{(2n)^n D^{4 n^2}} \enspace .
  \]
  This bound can be improved by noting that the matrices~\eqref{eq:transition-marix} have a particular structure.
Indeed, if $(m,m')$  is an invariant probability measure
of~\eqref{eq:transition-marix}, then it satisfies~\eqref{eq:invariant-measures},
and so $2m'$ is an invariant probability measure of the matrix  $QP^{\sigma\tau}$,
the entries of which are entries of $P^{\sigma\tau}$ (since $Q$ has only 
one nonzero entry in each row and this entry is equal to $1$).
  Applying Lemma~\ref{lem-matrix} to the matrix $QP^{\sigma\tau}$,
we obtain the following lower bound for the positive entries of $m'$:
 \[
    K_1 = \frac{1}{2 n^{n/2} D^{n^2}} \enspace .
  \]
Hence, 
  \begin{flalign*}
    && M_0 \leq (1 + (3/2)D) \; 2 n^{n/2} D^{n^2}\leq 4 n^{n/2} D^{n^2+1} \enspace . && \qed
  \end{flalign*}
\renewcommand{\qedsymbol}{}
\end{proof}

An important special case to which the method of Theorem~\ref{thm:polynomial-reduction}
can be applied concerns {\em deterministic} mean-payoff games~\cite{gurvich,zwick}. 
The input of such games can be described, as in~\cite{AGGut10}, 
by means of two matrices $A, B \in (\Z \cup \{-\infty\})^{m \times n}$.
The corresponding Shapley operator can be written as
\begin{align}
  \label{e-det}
  T_i(x) = \min_{1 \leq j \leq m} \big( -A_{ji} + \max_{1 \leq k \leq n} (B_{jk} + x_k) \big)
  \enspace , \quad x \in \R^n \enspace , \quad 1 \leq i \leq n \enspace .
\end{align}
The corresponding game is played by moving a token on a graph in which $n$ nodes,
denoted by $1,\dots,n$, belong to player \MIN, whereas $m$ other nodes,
denoted by $1',\dots,m'$, belong to player \MAX.
In state $i \in \{1,\dots,n\}$, player \MIN can move the token to a state
$j \in \{1',\dots,m'\}$ such that $A_{ji}\neq -\infty$, receiving $A_{ji}$.
In state $j$, player \MAX can move the token to a state $k \in \{1,\dots,n\}$
such that $B_{jk} \neq -\infty$, receiving $B_{jk}$.
We assume that the matrix $B$ has no identically infinite row, and that the matrix $A$
has no identically infinite column, meaning that each player has at least
one available action in each state.
Then, the modified operator $T \circ R_M$ corresponds to the matrix $B_M$
in which infinite entries of $B$ are replaced by $-M$,
and the operator $g + T \circ R_M$ arises by subtracting the constant $g_i$
to every entry in the $i$th column of $A$.

\begin{theorem}
  \label{theo-det}
  Let $T$ denote the Shapley operator~\eqref{e-det} of a deterministic mean-payoff game,
  with integer payoffs bounded in absolute value by $D \geq 2$.
  Then, for
  \[
    M > 4 n D \quad \text{and} \quad 0 < \varepsilon < 1 / n^3 \enspace ,
  \]
  the policy iteration Algorithm~\ref{algo:main} applied to the operator
  $g_\varepsilon + T \circ R_M$ terminates,
  and we can compute the upper mean payoff of $T$ from any policy $\sigma$ of player \MIN
  such that $\lambda(g_\varepsilon + T \circ R_M) = \lambda(g_\varepsilon + T^\sigma \circ R_M)$,
  in the sense that $\chibar(T) = \chibar(T^\sigma)$.
\end{theorem}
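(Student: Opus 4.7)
The plan is to specialize the proof of Theorem~\ref{thm:polynomial-reduction} to the deterministic setting. The overall architecture carries over unchanged: Corollary~\ref{coro:ergodicity-assumption} ensures that $T \circ R_M$ satisfies Assumption~\ref{asm:ergodicity}, so Algorithm~\ref{algo:main} is well-defined on the input $g_\varepsilon + T \circ R_M$; Lemma~\ref{lem:homogeneization} forces the eigenvalue of $g+T \circ R_M$ to equal $\chibar(g+T)$ as soon as $M$ exceeds a threshold $M_0$; and Theorem~\ref{thm:generic-termination} delivers termination of policy iteration whenever $g_\varepsilon$ lies in the interior of a full-dimensional cell of the polyhedral complex of Subsection~\ref{sec:proof-main-thm}. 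What must be done for the present statement is to replace the general stochastic bounds on the constants $K_1, K_2, K_3$ appearing in the proof of Theorem~\ref{thm:polynomial-reduction} by sharper ones available when transition probabilities are $0$ or $1$.

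The crucial observation is that every matrix $P^{\sigma\tau}$ arising from a pair of deterministic policies has $0/1$ entries with exactly one nonzero entry per row; its eventual dynamics therefore live on disjoint cycles of length at most $n$, and each extreme invariant probability measure $m \in \M^*(P^{\sigma\tau})$ is the uniform distribution on a cycle $C$, so $m_i = 1/|C|$ for $i \in C$ and $m_i = 0$ otherwise. From this I would derive: the constant $K_2$, a lower bound on nonzero entry-wise differences of such measures, satisfies $K_2 \geq 1/n^2$, since $|1/k - 1/k'| \geq 1/(n(n-1))$ and $|1/k - 0| \geq 1/n$; the constant $K_3$, a lower bound on nonzero differences of the scalar products $\<m, r^{\sigma\tau}> = p/|C|$ with $p \in \Z$ and $|C| \leq n$, likewise satisfies $K_3 \geq 1/n^2$; and therefore $\min(K_2/n, K_3) \geq 1/n^3$, validating the range $0 < \varepsilon < 1/n^3$. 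For $K_1$, the argument at the end of the proof of Theorem~\ref{thm:polynomial-reduction} identifies $2m'$ with an invariant probability measure of a $0/1$ stochastic matrix on $n$ states, so $K_1 \geq 1/(2n)$; hence $M_0 \leq K_1^{-1}(1 + (3/2)\|r\|_\infty) \leq 2n(1 + (3/2)D) = 2n + 3nD$, and since $D \geq 2$ gives $2n \leq nD$, one obtains $M_0 \leq 4nD$. Therefore $M > 4nD$ suffices, and feeding these sharpened constants into the framework of Theorem~\ref{thm:polynomial-reduction} yields both termination of Algorithm~\ref{algo:main} and the equality $\chibar(T) = \chibar(T^\sigma)$ for any policy $\sigma$ with $\lambda(g_\varepsilon + T \circ R_M) = \lambda(g_\varepsilon + T^\sigma \circ R_M)$.

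The main obstacle I anticipate is the bookkeeping for $K_1$. In the general stochastic case one must invoke the Hadamard bound of Lemma~\ref{lem-matrix} to control the denominators of the invariant probability measures attached to the doubled transition matrix~\eqref{eq:transition-marix}, producing the bulky factor $n^{n/2}D^{n^2}$. The deterministic case bypasses this entirely because that matrix becomes the product of two $0/1$ stochastic matrices whose joint dynamics live on cycles of length at most $n$; once this reduction is made precise, the replacement of $n^{n/2}D^{n^2}$ by $n$ is immediate, and the remainder is routine arithmetic.
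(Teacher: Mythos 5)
Your proposal is correct and follows essentially the same route as the paper: specialize the proof of Theorem~\ref{thm:polynomial-reduction} by noting that in the deterministic case every invariant probability measure is uniform on a cycle of length at most $n$ (respectively $2n$ in $\Gamma_M$), which yields $K_1=1/(2n)$, $K_2=K_3=1/n^2$, hence $M_0\leq 2n(1+(3/2)D)\leq 4nD$ and $\varepsilon<1/n^3$. The identification of $2m'$ with an invariant measure of the product $QP^{\sigma\tau}$ that you flag as the main bookkeeping point is exactly the refinement the paper already uses at the end of the proof of Theorem~\ref{thm:polynomial-reduction}, so nothing further is needed.
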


\begin{proof}
  We adapt the proof of Theorem~\ref{thm:polynomial-reduction} to the case of
  deterministic transition matrices.
  In that special case, every invariant probability measure is uniform on its support, 
hence its positive entries are
  bounded below by $1/n$ if the state space has cardinality $n$.
  Since the constant $K_1$ is a lower bound for the positive entries of
  the invariant probability measures of the transition matrices in $\Gamma_M$,
one can choose $K_1 = 1/(2n)$, and then
  \[
    M_0 \leq (2n) (1 + (3/2) D) \leq 4 n D \enspace .
  \]
  The constant $K_2$, which is a lower bound on the absolute values of the differences
  between two distinct entries of invariant probability measures of transition matrices in $\Gamma$,
  can be chosen as $K_2 = 1 / n^2$.
  As for $K_3$, it is a lower bound on the absolute values of the differences
  between two distinct values of $\<m,r^{\sigma \tau}>$.
  Since the payments are integers, every scalar product $\<m,r^{\sigma \tau}>$ is
  a rational number whose denominator divides the denominator of the positive entries of $m$ which are themselves bounded by $n$.
  Hence, one can choose $K_3 =1 / n^2$, and the parameter $\varepsilon$ must be lower than
  \begin{flalign*}
    && \min \{ K_2 n^{-1}, K_3 \} = 1 / n^3 \enspace . && \qed
  \end{flalign*}
\renewcommand{\qedsymbol}{}
\end{proof}

\begin{remark}
  One step in Algorithm~\ref{algo:main} consists in computing an eigenpair
  $(\lambda^\ki,v^\ki)$ of the reduced Shapley operator $T^{\sigma_\ki}$
  obtained by fixing the strategy $\sigma_k$ of player \MIN.
  This is a simpler problem which can be solved by several known methods. 
  We may apply, for instance, a similar policy iteration algorithm to $T^{\sigma_\ki}$,
  iterating this time in the space of policies $\tau$ of player \MAX.
  In this way, for each choice of $\tau$, we arrive at an operator of the form
  $T^{\sigma_\ki,\tau} (x) = g + P x$, where $P$ is a stochastic matrix
  which cannot in general be assumed to be irreducible.
  However, for one-player problems, a classical version of policy iteration, the multichain
  policy iteration introduced by Howard~\cite{How60} and Denardo and Fox~\cite{DF68},
  does allow one to determine $(\lambda^\ki,v^\ki)$ (without genericity conditions). 
  Moreover, in the special case of deterministic games, the vector $v^\ki$ is known
  to be a tropical eigenvector and $\lambda^\ki$ is a tropical eigenvalue.
  The tropical eigenpair can be computed by direct combinatorial algorithms,
  see e.g.\ the discussion in~\cite{CTGG99}.
\end{remark}

\begin{remark}
  Theorem~\ref{theo-det} should be compared with the other known perturbation scheme,
  relying on vanishing discount.
  This method requires the computation of a fixed point of the operator
  $x \mapsto T(\alpha x)$ for $0 < \alpha < 1$ sufficiently close to one.
  It is known that, for deterministic mean-payoff games, if the discount factor $\alpha$
  is chosen so that
  \[
    \alpha > 1 - \frac{1}{4(n+m)^3 D} \enspace ,
  \]
  where $D$ denotes the maximal absolute value of a finite entry $A_{ij}$ or $B_{ij}$,
  then, the solution of the mean-payoff problem can be derived from the solution of
  the discounted problem, see~\cite[Sec.~5]{zwick}.
  The latter can be obtained by policy iteration (which terminates without any
  nondegeneracy conditions in the discounted case). 
  Applying Algorithm~\ref{algo:main} to the map $g_\varepsilon + T$ requires solving
  linear systems in which the matrix is independent of $\varepsilon$.
If this is done by calling the Denardo-Fox algorithm to solve one-player problem
(see previous remark), these systems are well conditioned.
By comparison, vanishing discount requires the inversion of a matrix which
  becomes singular as $\alpha\to 1$. 
  In particular, if policy iteration is interpreted in floating-point arithmetics,
  vanishing discount based perturbations may lead to numerical instabilities or to overflows,
  whereas the present additive perturbation scheme is insensitive to this pathology,
  because it only perturbs the right-hand sides of the linear systems to be solved.
\end{remark}

\begin{remark}
  The present approach allows one to compute the {\em upper mean payoff},
  i.e., the maximum of the mean payoff over all initial states. 
  This leads to no loss of expressivity since it follows
  from known reductions that this problem is polynomial time
  equivalent to solving a mean-payoff game in which the initial
  state is fixed: combine the results of Appendix C in the extended version of~\cite{AGS16}
  (especially Lemma C.2 and Corollary C.3) with the reductions in~\cite{AM09}.
  An alternative route to compute the mean payoff
  of a given initial state, avoiding the use of such reductions, would
  be to extend the present perturbation scheme to the ``multichain'' version
  of policy iteration, discussed in~\cite{CTG06,ACTDG}.
\end{remark}

\bibliographystyle{amsalpha}
\bibliography{references}

\end{document}